\newcommand{\df}{F}
\newcommand{\ldf}{\underline{\df}}
\newcommand{\udf}{\overline{\df}}
\newcommand{\pdomain}{\mathcal{K}}
\newcommand{\plattice}{\mathcal{H}}
\newcommand{\pr}{P}
\newcommand{\nex}{E}
\newcommand{\lpr}{\underline{\pr}}
\newcommand{\upr}{\overline{\pr}}
\newcommand{\lnex}{\underline{\nex}}
\newcommand{\unex}{\overline{\nex}}
\newcommand{\upbox}{\upr_{\ldf,\udf}}
\newcommand{\upboxlattice}{{\upr^{\plattice}_{\ldf,\udf}}}
\newcommand{\lnepbox}{\lnex_{\ldf,\udf}}
\newcommand{\unepbox}{\unex_{\ldf,\udf}}
\newcommand{\lnepossib}{\lnex_{\Pi}}
\newcommand{\lcdf}{\lpr_\df}
\newcommand{\lnecdf}{\lnex_\df}
\newcommand{\set}[2]{\left\{#1\colon#2\right\}}
\newcommand{\gambles}{\mathcal{L}}
\newcommand{\solp}{\mathcal{M}}
\newcommand{\pspace}{\Omega}
\newcommand{\allprobs}{\mathcal{P}} 
\newcommand{\reals}{\mathbb{R}}
\newcommand{\values}{\mathcal X}
\theoremstyle{plain}
\newtheorem{theorem}{Theorem}
\newtheorem{proposition}[theorem]{Proposition}
\newtheorem{corollary}[theorem]{Corollary}
\newtheorem{lemma}[theorem]{Lemma}
\theoremstyle{remark}
\newtheorem{example}[theorem]{Example}
\theoremstyle{definition}
\newtheorem{definition}[theorem]{Definition}
\begin{document}
\title{On the connection between probability boxes and possibility measures}
\author[M. Troffaes]{Matthias C. M. Troffaes}
\address{Durham University, Dept. of Mathematical Sciences, Science Laboratories, South Road, Durham
DH1 3LE, United Kingdom} \email{matthias.troffaes@gmail.com}
\author[E. Miranda]{Enrique Miranda}
\address{University of Oviedo, Dept. of Statistics and O.R. C-Calvo Sotelo, s/n, Oviedo, Spain}
\email{mirandaenrique@uniovi.es}
\author[S. Destercke]{Sebastien Destercke}
\address{CIRAD, UMR1208, 2 place P. Viala, F-34060 Montpellier cedex 1, France} \email{desterck@irit.fr}

\begin{abstract}
  We explore the relationship between possibility measures (supremum preserving normed measures) and p-boxes (pairs of cumulative distribution functions) on totally preordered spaces, extending earlier work in this direction by
  De Cooman and Aeyels, among others.
  We start by demonstrating that only
  those p-boxes who have $0$--$1$-valued lower or upper cumulative
  distribution function can be possibility measures, and we derive
  expressions for their natural extension in this case. Next, we
  establish necessary and sufficient conditions for a p-box to be a
  possibility measure. Finally, we show that almost every possibility
  measure can be modelled by a p-box. Whence, any techniques for
  p-boxes can be readily applied to possibility measures. We
  demonstrate this by deriving joint possibility measures from
  marginals, under varying assumptions of independence, using a
  technique known for p-boxes.
  Doing so, we arrive at a new rule of
  combination for possibility measures, for the independent case.
\end{abstract}

\keywords{Probability boxes, possibility measures, maxitive
measures, coherent lower and upper probabilities, natural
extension.}

\maketitle
\thispagestyle{fancy}

\section{Introduction}

Firstly, possibility measures are supremum preserving set functions,
and were introduced in fuzzy set theory \cite{zadeh1978a},
although earlier appearances exist \cite{shackle1961,lewis1973a}.
Because of their computational simplicity,
possibility measures are widely applied in many fields,
including data analysis
\cite{tanaka1999}, diagnosis \cite{cayrac1996}, cased-based
reasoning \cite{hullermeier2007}, and psychology
\cite{raufaste2003}.
This paper concerns quantitative possibility theory
\cite{dubois1988}, where degrees of
possibility range in the unit interval. Interpretations
abound
\cite{dubois2006}: we can see them as likelihood
functions \cite{dubois1997b}, as particular cases of plausibility
measures \cite{shafer1976,shafer1987}, as extreme probability
distributions \cite{Spohn88}, or as upper
probabilities \cite{walley1996,cooman1998}. The upper probability
interpretation fits our purpose best, whence is assumed herein.

Secondly, probability boxes
\cite{ferson2003,2006:ferson}, or p-boxes for short, are
pairs of lower and upper cumulative distribution functions, and are often used
in risk and safety studies, in which they play an
essential role.
P-boxes have been connected to info-gap theory \cite{2008:ferson},
random sets \cite{KrieglerHeld2005,2008:oberguggenberger}, and
also, partly, to possibility measures \cite{2006:baudrit,cooman1998}. P-boxes can
be defined on arbitrary finite spaces \cite{desterckedubois2008}, and,
more generally, even on arbitrarily totally pre-ordered spaces
\cite{unpub:troffaesdestercke::pboxes:multivariate}---we will use this
extensively.

This paper aims to consolidate the connection between
possibility measures and p-boxes, making as few assumptions as possible.
We prove that almost every possibility measure can
be interpreted as a p-box, whence, p-boxes effectively generalize
possibility measures. Conversely, we provide necessary
and sufficient conditions for a p-box to be a possibility measure,
whence, providing conditions under which the more efficient mathematical
machinery of possibility measures is applicable to p-boxes.

To study this connection, we use imprecise probabilities \cite{walley1991},
because both possibility
measures and p-boxes are particular cases of imprecise probabilities.
Possibility measures are explored as imprecise probabilities in \cite{walley1996,cooman1998,miranda2003}, and p-boxes were studied as
imprecise probabilities briefly in
\cite[Section~4.6.6]{walley1991} and \cite{troffaes2005}, and in
much more detail in
\cite{unpub:troffaesdestercke::pboxes:multivariate}.

The paper is organised as follows: in Section~\ref{sec:prel}, we
give the basics of the behavioural theory of imprecise
probabilities, and recall some facts about p-boxes and possibility
measures; in Section~\ref{sec:pbox-as-max}, we first determine
necessary and sufficient conditions for a p-box to be maximum
preserving, before determining in Section~\ref{sec:pbox-to-poss}
necessary and sufficient conditions for a p-box to be a possibility
measure; in Section~\ref{sec:poss-to-pbox}, we show that almost any
possibility measure can be seen as particular p-box, and that many
p-boxes can be seen as a couple of possibility measures; some
special cases are detailed in Section~\ref{sec:specialcases}.
Finally, in Section~\ref{sec:marginals} we apply the work on
multivariate p-boxes from
\cite{unpub:troffaesdestercke::pboxes:multivariate} to derive
multivariate possibility measures from given marginals, and in
Section~\ref{sec:conclusions} we give a number of additional
comments and remarks.

\section{Preliminaries}
\label{sec:prel}

\subsection{Imprecise Probabilities}
\label{sec:imprecise:probabilities}

We start with a brief introduction to imprecise probabilities (see
\cite{1854:boole,williams1975,walley1991,miranda2008} for more
details). Because possibility measures are interpretable as upper
probabilities, we start out with those, instead of lower
probabilities---the resulting theory is equivalent.

Let $\pspace$ be the possibility space. A subset of $\pspace$ is
called an \emph{event}. Denote the set of all events by
$\wp(\pspace)$, and the set of all finitely additive
probabilities on $\wp(\pspace)$ by $\allprobs$.

In this paper, an \emph{upper probability} is any real-valued function $\upr$ defined on an arbitrary subset
$\pdomain$ of $\wp(\pspace)$.
With $\upr$, we associate a
\emph{lower probability} $\lpr$ on $\{A\colon A^c \in
\pdomain\}$ via the conjugacy relationship $$\lpr(A)=1-\upr(A^c).$$

Denote the set of all finitely additive probabilities on $\wp(\pspace)$ that are dominated by $\upr$ by:
\begin{equation*}
  \solp(\upr)=\{\pr\in\allprobs\colon (\forall A\in\pdomain)(\pr(A)\le\upr(A))\}
\end{equation*}
Clearly, $\solp(\upr)$ is also the set of all finitely additive
probabilities on $\wp(\pspace)$ that dominate $\lpr$ on its domain
$\{A\colon A^c \in \pdomain\}$.

The upper envelope $\unex$ of $\solp(\upr)$ is called the
\emph{natural extension} \cite[Thm.~3.4.1]{walley1991}
of $\upr$:
\begin{equation*}
  \unex(A)=\sup\{\pr(A)\colon \pr \in \solp(\upr)\}
\end{equation*}
for all $A\subseteq\pspace$.
The corresponding lower probability is denoted by $\lnex$, so
$\lnex(A)=1-\unex(A^c)$. Clearly, $\lnex$ is the lower envelope of $\solp(\upr)$.

We say that $\upr$ is \emph{coherent} (see \cite[p.~134,
Sec.~3.3.3]{walley1991}) when it coincides with $\unex$ on its domain,
that is, when, for all $A\in\pdomain$,
\begin{equation}\label{eq:coherence}
 \upr(A)=\unex(A).
\end{equation}
The lower
probability $\lpr$ is called {\em coherent} whenever $\upr$
is.

The upper envelope of any set of finitely additive probabilities on
$\wp(\pspace)$ is coherent.
A coherent upper probability $\upr$ and its conjugate lower
probability $\lpr$ satisfy the following properties
\cite[Sec.~2.7.4]{walley1991}, whenever the relevant events
belong to their domain:
\begin{enumerate}
 \item $0 \leq \lpr(A)\leq \upr(A)\leq 1$.
 \item $A \subseteq B$ implies $\upr(A) \leq
 \upr(B)$ and $\lpr(A) \leq \lpr(B)$. [Monotonicity]
 \item $\upr(A \cup B)\leq \upr(A)+\upr(B)$. [Subadditivity]
\end{enumerate}

\subsection{P-Boxes}
\label{sec:charac}

In this section, we revise the theory and some of the main results for p-boxes
defined on totally preordered (not necessarily finite) spaces. For further
details, we refer to \cite{unpub:troffaesdestercke::pboxes:multivariate}.

We start with a totally preordered space $(\pspace,\preceq)$. So, $\preceq$ is transitive and
reflexive and any two elements are comparable. As usual, we write $x\prec y$ for
$x\preceq y$ and $x\not\succeq y$, $x\succ y$ for $y\prec x$, and $x\simeq y$ for $x\preceq y$ and $y\preceq x$. For any two $x$,
$y\in\pspace$ exactly one of $x\prec y$, $x\simeq y$, or $x\succ y$ holds. We also use the following common notation for intervals in $\pspace$:
\begin{align*}
  [x,y]&=\{z\in\pspace\colon x\preceq z\preceq y\} \\
  (x,y)&=\{z\in\pspace\colon x\prec z\prec y\}
\end{align*}
and similarly for $[x,y)$ and $(x,y]$.

We assume that $\pspace$ has a smallest element $0_\pspace$ and a
largest element $1_\pspace$. This is no essential assumption, since
we can always add these two elements to the space $\pspace$.

A \emph{cumulative distribution function} is a non-decreasing map
$\df:\pspace\rightarrow [0,1]$ for which $\df(1_\pspace)=1$. For
each $x\in\pspace$, $\df(x)$ is interpreted as
the probability of $[0_\pspace,x]$. No further restrictions are imposed on $\df$.

The quotient set of $\pspace$ with respect to $\simeq$ is denoted by $\pspace/\simeq$:
\begin{align*}
  [x]_{\simeq}&=\{y\in\pspace\colon y\simeq x\} \text{ for any }x\in\pspace \\
  \pspace/\simeq&=\{[x]_{\simeq}\colon x\in\pspace\}.
\end{align*}
Because $\df$ is non-decreasing, $\df$ is constant on elements
$[x]_{\simeq}$ of $\pspace/\simeq$---we will use this repeatedly.

\begin{definition}
  A \emph{probability box}, or \emph{p-box}, is a pair $(\ldf,\udf)$ of
  cumulative distribution functions from $\pspace$ to $[0,1]$ satisfying $\ldf\leq\udf$.
\end{definition}

A p-box is interpreted as a lower and an upper cumulative
distribution function (see Fig.~\ref{fig:genpbox}), or more
specifically, as an upper probability $\upbox$ on the set of events
\begin{equation*}
  \set{[0_\pspace,x]}{x\in\pspace}\cup\set{(y,1_\pspace]}{y\in\pspace}
\end{equation*}
defined by
\begin{equation}\label{eq:basic-pbox}
  \upbox([0_\pspace,x])=\udf(x)\text{ and }\upbox((y,1_\pspace])=1-\ldf(y).
\end{equation}
We denote by $\unepbox$ the natural extension of $\upbox$ to all
events.

\begin{figure}
\begin{tikzpicture}[scale=0.85]
\tikzstyle{indf}=[circle,draw=black,fill=black,scale=0.3]
\tikzstyle{notin}=[circle,draw=black,solid,scale=0.3]
\tikzstyle{inout}=[circle,draw=gray,fill=gray,scale=0.3]
\draw[->] (-0.1,0) -- (5.5,0) node[below right] {$\pspace$}; \draw
(5,0.1) -- (5,-0.1) node[below] {$1$}; \draw[->] (0,-0.1)
node[below] {$0$} -- (0,3) node[left] {1} -- (0,3.2);
\draw[domain=0:5,thick]  plot[id=cdf1,smooth] function{3*norm((x-2)*2)};
\draw[domain=0:5,thick]  plot[id=cdf2,smooth] function{3*norm((x-3)*2)};
\node at (1.7,1.5) {$\udf$}; \node at (3.3,1.5) {$\ldf$};
\end{tikzpicture}
\caption{Example of a p-box on $[0,1]$.}
\label{fig:genpbox}
\end{figure}
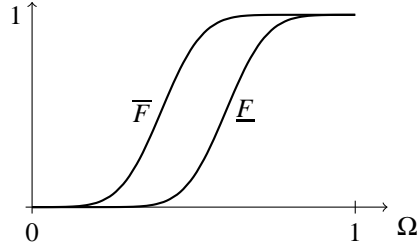

We now recall the main results that we shall need regarding the natural extension
$\unepbox$ of $\upbox$
(see \cite{unpub:troffaesdestercke::pboxes:multivariate} for further details).
First, because $\upbox$ is coherent,
$\unepbox$ coincides with $\upbox$ on its domain.

Next, to simplify the expression for
natural extension, we introduce an element $0_\pspace-$ such that:
\begin{gather*}
  0_\pspace-\prec x\text{ for all }x\in\pspace
  \\
  \df(0_\pspace-)=\ldf(0_\pspace-)=\udf(0_\pspace-)=0.
\end{gather*}
Note that $(0_\pspace-,x]=[0_\pspace,x]$.
Now, let $\pspace^*=\pspace\cup\{0_\pspace-\}$, and define
\begin{equation}\label{eq:plattice}
  \plattice=\{
  (x_0,x_1]\cup(x_2,x_3]\cup\dots\cup(x_{2n},x_{2n+1}]
  \colon
  x_0\prec x_1\prec \dots\prec x_{2n+1}\text{ in }\pspace^*\}.
\end{equation}

\begin{proposition}[\cite{unpub:troffaesdestercke::pboxes:multivariate}]\label{prop:nex-lattice}
  For any $A\in\plattice$, that is
  $A=(x_0,x_1]\cup(x_2,x_3]\cup\dots\cup(x_{2n},x_{2n+1}]$ with $x_0\prec x_1\prec \dots\prec x_{2n+1}$ in $\pspace^*$,
  it holds that $\unepbox(A)=\upboxlattice(A)$, where
  \begin{equation}
    \label{eq:nex-lattice}
    \upboxlattice(A)=
    1-\sum_{k=0}^{n+1} \max\{0,\ldf(x_{2k})-\udf(x_{2k-1})\},
  \end{equation}
  with $x_{-1}=0_\pspace-$ and $x_{2n+2}=1_\pspace$.
\end{proposition}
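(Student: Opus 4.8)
The goal is to compute the natural extension $\unepbox(A)$ for $A \in \plattice$, and the plan is to establish the two inequalities $\unepbox(A) \le \upboxlattice(A)$ and $\unepbox(A) \ge \upboxlattice(A)$ separately. Since $A^c$ (within $\pspace$) is again a finite union of intervals of the form $(y_0,y_1] \cup \dots$, it is convenient to work with the conjugate lower probability $\lnepbox(A^c) = 1 - \unepbox(A)$; the quantity $\sum_{k=0}^{n+1}\max\{0,\ldf(x_{2k})-\udf(x_{2k-1})\}$ is then naturally read as a lower probability of a union of the ``gap'' intervals $(x_{2k-1},x_{2k}]$.

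For the upper bound $\unepbox(A) \le \upboxlattice(A)$, I would use coherence together with the structural properties of coherent upper probabilities recalled in Section~\ref{sec:prel} (monotonicity and subadditivity). Write $A = \bigcup_{k=0}^{n}(x_{2k},x_{2k+1}]$. Each piece $(x_{2k},x_{2k+1}]$ can be written as a set difference $[0_\pspace,x_{2k+1}] \setminus [0_\pspace,x_{2k}]$, and the complementary gaps $(x_{2k-1},x_{2k}]$ (with $x_{-1}=0_\pspace-$, $x_{2n+2}=1_\pspace$) partition $\pspace$ together with $A$. Since $\lnepbox$ is superadditive on disjoint events and $\lnepbox((x_{2k-1},x_{2k}]) \ge \lpbox([0_\pspace,x_{2k}]) - \upbox([0_\pspace,x_{2k-1}]) = \ldf(x_{2k}) - \udf(x_{2k-1})$ — and trivially $\ge 0$ — one gets $\lnepbox(A^c) \ge \sum_k \max\{0,\ldf(x_{2k})-\udf(x_{2k-1})\}$, which is exactly $\unepbox(A) \le \upboxlattice(A)$. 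The small technical point here is justifying the bound $\lnepbox((x_{2k-1},x_{2k}]) \ge \ldf(x_{2k}) - \udf(x_{2k-1})$ directly from the avoiding-sure-loss / coherence inequalities for $\upbox$ on its domain; this follows because any $\pr \in \solp(\upbox)$ satisfies $\pr((x_{2k-1},x_{2k}]) = \pr([0_\pspace,x_{2k}]) - \pr([0_\pspace,x_{2k-1}]) \ge \ldf(x_{2k}) - \udf(x_{2k-1})$.

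For the lower bound $\unepbox(A) \ge \upboxlattice(A)$, equivalently $\lnepbox(A^c) \le \upboxlattice(A)^c$-value, I would exhibit, for every $\varepsilon > 0$, a finitely additive probability $\pr \in \solp(\upbox)$ with $\pr(A) \ge \upboxlattice(A) - \varepsilon$. The natural construction is to place, on each gap interval $(x_{2k-1},x_{2k}]$, exactly the mass $\max\{0,\ldf(x_{2k})-\udf(x_{2k-1})\}$ forced to be there, and to distribute all remaining mass inside $A$. Concretely, one builds a cumulative distribution function $\df$ with $\ldf \le \df \le \udf$ that is ``as flat as possible'' on the gaps: on $[x_{2k-1},x_{2k}]$ let $\df$ be constant equal to $\max\{\udf(x_{2k-1}),\, \text{(running total)}\}$ except for a jump of size $\max\{0,\ldf(x_{2k})-\udf(x_{2k-1})\}$ placed at $x_{2k}$, and let $\df$ rise freely (staying between $\ldf$ and $\udf$) on the pieces of $A$. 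The associated probability charge $\pr$ then lies in $\solp(\upbox)$ because $\ldf \le \df \le \udf$ guarantees $\pr([0_\pspace,x]) \le \udf(x)$ and $\pr((y,1_\pspace]) \le 1-\ldf(y)$ for all $x,y$, and by construction $\pr(A^c) = \sum_k \max\{0,\ldf(x_{2k})-\udf(x_{2k-1})\}$, so $\pr(A) = \upboxlattice(A)$.

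The main obstacle is the construction in the lower-bound half: one must check that such a monotone $\df$ squeezed between $\ldf$ and $\udf$ actually exists — in particular that the running totals never exceed $\udf$ on the next gap nor fall below $\ldf$ — and, on an arbitrary totally preordered (possibly non-finite, non-complete) space, that the resulting set function extends to a genuine finitely additive probability on all of $\wp(\pspace)$. The monotonicity check reduces to the observation that the gaps are processed in $\preceq$-order and each prescribed jump is the minimum forced by $\ldf$, so the cumulative total at $x_{2k}$ is $\max\{\udf(x_{2k-1}), \ldf(x_{2k})\} \le \udf(x_{2k})$; the extension to a probability charge is routine once $\df$ is a bona fide cumulative distribution function on $\pspace$, using that any cumulative distribution function induces a coherent $\lcdf$ whose natural extension is a (dominated) probability. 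Everything else — rewriting $A$ and $A^c$ as finite unions of half-open intervals, handling the boundary terms $x_{-1}$ and $x_{2n+2}$, and the $\varepsilon$-bookkeeping — is bookkeeping that I would not spell out in detail.
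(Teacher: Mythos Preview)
The paper does not actually prove this proposition: it is quoted verbatim from \cite{unpub:troffaesdestercke::pboxes:multivariate} as background, so there is no in-paper proof to compare your attempt against.

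That said, your strategy is the standard one and is essentially correct. The upper bound is clean: for any $\pr\in\solp(\upbox)$ one has $\pr(A^c)=\sum_{k}\pr((x_{2k-1},x_{2k}])\ge\sum_{k}\max\{0,\ldf(x_{2k})-\udf(x_{2k-1})\}$, hence $\unepbox(A)\le\upboxlattice(A)$. For the lower bound your idea of building a witnessing $\df$ with $\ldf\le\df\le\udf$ that is ``as flat as possible'' on the gaps is right, but your description of that $\df$ is imprecise and, as literally written (constant equal to $\udf(x_{2k-1})$ on the gap interior), can violate $\ldf\le\df$ when $\ldf$ climbs above $\udf(x_{2k-1})$ strictly inside $(x_{2k-1},x_{2k})$. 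A clean global formula that does the job is
\[
\df(x)=\max\Bigl\{\ldf(x),\ \max\{\udf(x_{2j-1})\colon x_{2j-1}\preceq x\}\Bigr\},
\]
which is non-decreasing, sandwiched between $\ldf$ and $\udf$, and satisfies $\df(x_{2k})-\df(x_{2k-1})=\max\{0,\ldf(x_{2k})-\udf(x_{2k-1})\}$ for every $k$. With this $\df$ in hand, your appeal to the non-emptiness of the credal set of the precise p-box $(\df,\df)$ to obtain a finitely additive $\pr$ with $\pr([0_\pspace,x])=\df(x)$ is legitimate, and then $\pr(A)=\upboxlattice(A)$ follows by finite additivity. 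So the only real gap in your write-up is the explicit, correct specification of $\df$; once that is fixed, the argument goes through.
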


To calculate $\unepbox(A)$ for an arbitrary event
$A\subseteq\pspace$, we can use the outer measure
\cite[Cor.~3.1.9,p.~127]{walley1991} $\upboxlattice^*$ of the upper
probability $\upboxlattice$ defined in Eq.~\eqref{eq:nex-lattice}:
\begin{equation}\label{eq:outer-measure}
  \unepbox(A)=\upboxlattice^*(A)=\inf_{C\in\plattice,\,A\subseteq C}\upboxlattice(C).
\end{equation}

For intervals, we immediately infer from
Proposition~\ref{prop:nex-lattice} and Eq.~\eqref{eq:outer-measure}
that
\begin{subequations}\label{eq:nex-intervals}
\begin{align}
\unepbox((x,y])&=\udf(y)-\ldf(x)
\\
\unepbox([x,y])&=\udf(y)-\ldf(x-)
\\
\unepbox((x,y))&=
\begin{cases}
  \udf(y)-\ldf(x) & \text{ if $y$ has no immediate predecessor}
  \\
  \udf(y-)-\ldf(x) & \text{ if $y$ has an immediate predecessor}
\end{cases}
\\
\unepbox([x,y))&=
\begin{cases}
  \udf(y)-\ldf(x-) & \text{ if $y$ has no immediate predecessor}
  \\
  \udf(y-)-\ldf(x-) & \text{ if $y$ has an immediate predecessor}
\end{cases}
\end{align}
\end{subequations}
for any $x\prec y$ in $\pspace$,\footnote{In case $x=0_\pspace$,
evidently, $0_\pspace-$ is the immediate predecessor.} where
$\udf(y-)$ denotes $\sup_{z\prec y}\udf(z)$ and similarly for
$\ldf(x-)$. If $\pspace/\simeq$ is finite, then one can think of
$z-$ as the immediate predecessor of $z$ in the quotient space
$\pspace/\simeq$ for any $z \in \pspace$. Note that in particular
\begin{equation}\label{eq:unex-singleton}
 \unepbox(\{x\})=\udf(x)-\ldf(x-)
\end{equation}
for any $x \in \pspace$. We will use this repeatedly.

\subsection{Possibility and Maxitive Measures}

Very briefly, we introduce possibility and maxitive measures. For
further information, see
\cite{zadeh1978a,dubois1988,walley1996,cooman1998}.

\begin{definition}
 A \emph{maxitive measure} is an upper probability $\upr\colon\wp(\pspace)\to[0,1]$ satisfying $\upr(A \cup
 B)=\max\{\upr(A),\upr(B)\}$ for every $A$, $B\subseteq\pspace$.
\end{definition}

It follows from the above definition that a maxitive measure is also
maximum-preserving when we consider finite unions of events.

The following result is well-known, but we include a quick proof for
the sake of completeness.

\begin{proposition}
  A maxitive measure $\upr$ is coherent whenever $\upr(\emptyset)=0$ and $\upr(\pspace)=1$.
\end{proposition}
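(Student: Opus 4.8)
The plan is to show that a maxitive measure $\upr$ with $\upr(\emptyset)=0$ and $\upr(\pspace)=1$ coincides with its natural extension $\unex$ on all of $\wp(\pspace)$, which is what coherence requires here since the domain is the whole power set. Since $\unex$ is always dominated by $\upr$ on the domain (natural extension only shrinks), the real content is the reverse inequality $\upr(A)\le\unex(A)$ for every $A\subseteq\pspace$, i.e.\ that there is a finitely additive probability $\pr\in\solp(\upr)$ with $\pr(A)$ arbitrarily close to (in fact equal to) $\upr(A)$.

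First I would dispose of the trivial case $\upr(A)=0$, where any $\pr\in\solp(\upr)$ works (and $\solp(\upr)\ne\emptyset$ because $\upr\ge 0$ and $\upr(\pspace)=1$ — a point mass at any $\omega$ with\ldots actually one must be slightly careful, so the cleaner route is the construction below which simultaneously shows $\solp(\upr)\ne\emptyset$). For the main case, fix $A$ with $\upr(A)=:\alpha>0$. The idea is to build a $\{0,\alpha\}$-valued (suitably normalised) finitely additive probability concentrated ``just outside'' $A$ in a way that respects maxitivity. Concretely, consider the collection $\mathcal{A}=\{B\subseteq\pspace\colon \upr(B)<\alpha\}$ together with $A$ itself; by maxitivity, $\upr(B\cup B')=\max\{\upr(B),\upr(B')\}<\alpha$ for $B,B'\in\mathcal A$, and monotonicity gives that $\mathcal A$ is closed under subsets. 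Using this, I would show $\mathcal A$ generates a proper filter of ``$\upr$-null-ish'' sets, extend its complement-filter to an ultrafilter $\mathcal U$ on $\pspace$, and let $\pr$ be the $0$--$1$-valued finitely additive probability associated with $\mathcal U$ (i.e.\ $\pr(B)=1$ if $B\in\mathcal U$, else $0$). One checks $A^c\notin\mathcal U$ hence $\pr(A)=1$; and for any $C\in\pdomain=\wp(\pspace)$ with $\upr(C)<1$ one must verify $\pr(C)\le\upr(C)$ — the only danger is $\pr(C)=1$, i.e.\ $C\in\mathcal U$, while $\upr(C)<1$. Then scale: actually one wants $\pr(A)=\alpha$, not $1$, so instead take $\pr=\alpha\pr_{\mathcal U}+(1-\alpha)\pr'$ where $\pr_{\mathcal U}$ concentrates on $A$ and $\pr'$ concentrates on a set $D$ with $\upr(D)=1$ chosen inside $A^c$ when possible; when $\upr(A^c)<1$ one argues directly from $\upr(A)+\upr(A^c)\ge\upr(\pspace)=1$ (subadditivity, available since coherent upper probabilities — or here just maxitivity plus $\upr(\pspace)=1$ — give $\max\{\upr(A),\upr(A^c)\}=1$, so actually $\alpha=1$ and the earlier construction already finishes it, while if $\upr(A^c)=1$ we genuinely need the convex combination). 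I would organise the write-up around this dichotomy $\upr(A)=1$ versus $\upr(A)<1$ (forcing $\upr(A^c)=1$), which keeps the ultrafilter argument applied to the side that has full upper probability.

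The main obstacle is the verification that the convex combination $\pr=\alpha\pr_{\mathcal U_1}+(1-\alpha)\pr_{\mathcal U_2}$ really lies in $\solp(\upr)$, i.e.\ $\pr(C)\le\upr(C)$ simultaneously for \emph{all} $C\subseteq\pspace$: the two $0$--$1$ measures must be chosen so that whenever $\pr(C)>0$ we have $\upr(C)\ge\alpha$ (contribution from $\mathcal U_1\subseteq\{B\colon B\supseteq\text{something with }\upr\ge\alpha\}$) and whenever $\pr(C)=1$ we have $\upr(C)=1$. Maxitivity is exactly the tool that makes the relevant filters proper (finite unions of ``small'' sets stay ``small''), so the argument should go through, but getting the two ultrafilters to interact correctly — in particular ensuring $\mathcal U_1$ refines the filter $\{B\colon \upr(B^c)<\alpha\}$ and sits ``inside'' $A$, while $\mathcal U_2$ refines $\{B\colon \upr(B^c)=0\}$ — is the delicate bookkeeping. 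If the intended proof in the paper is shorter, it likely avoids ultrafilters by invoking that $\unex$ is the lower/upper envelope of $\solp(\upr)$ and directly exhibiting, for each $A$ and each $\epsilon$, a two-point (or two-piece) finitely additive $\pr$ with $\pr(A)\ge\upr(A)-\epsilon$, using maxitivity to pick the support points; I would fall back to that more hands-on version if the filter machinery proves heavier than the statement deserves.
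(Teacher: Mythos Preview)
Your approach is genuinely different from the paper's. The paper's proof is two lines of citations: a maxitive measure with $\upr(\emptyset)=0$ is $\infty$-alternating (hence $2$-alternating) by a result of Nguyen et al., and $2$-alternating upper probabilities with $\upr(\pspace)=1$ are coherent by a classical result of Walley. (One can also see $2$-alternation directly: $\upr(A\cup B)+\upr(A\cap B)=\max\{\upr(A),\upr(B)\}+\upr(A\cap B)\le\upr(A)+\upr(B)$ by monotonicity.) Your plan instead constructs, for each $A$, a dominated finitely additive probability achieving $\pr(A)=\upr(A)$ exactly, via ultrafilters refining the filters generated by $\{C^c:\upr(C)<\alpha\}$---which are proper precisely because maxitivity keeps finite unions of sub-$\alpha$ sets sub-$\alpha$. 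The organising dichotomy $\upr(A)=1$ versus $\upr(A^c)=1$ (forced by $\max\{\upr(A),\upr(A^c)\}=\upr(\pspace)=1$) is correct, and once the write-up is tidied the convex-combination check does go through: the only nontrivial case is $C\in\mathcal U_1\setminus\mathcal U_2$, giving $\pr(C)=\alpha$ and requiring $\upr(C)\ge\alpha$, which holds by construction of $\mathcal U_1$; membership in $\mathcal U_2$ already forces $\upr(C)=1$, so the ``interaction'' you worry about is automatic. Your argument is self-contained (modulo the ultrafilter lemma) and shows explicitly that the supremum defining $\unex(A)$ is attained, whereas the paper's route is far shorter but outsources all the work to the literature on Choquet capacities.
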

\begin{proof}
By \cite[Theorem~1]{nguyen1997}, a maxitive measure $\upr$
satisfying $\upr(\emptyset)=0$ is $\infty$-alternating, and as a
consequence also $2$-alternating.
Whence, $\upr$ is coherent by \cite[p.~55,
Corollary~6.3]{walley1981}.
\end{proof}

Possibility measures are a particular case of maxitive measures.

\begin{definition}
  A (normed) \emph{possibility distribution} is a mapping
  $\pi\colon\pspace\to[0,1]$ satisfying $\sup_{x\in\pspace}\pi(x)=1$.
  A possibility distribution $\pi$ induces a \emph{possibility measure} $\Pi$ on $\wp(\pspace)$, given by:
  \begin{equation*}
    \Pi(A)=\sup_{x\in A}\pi(x)\text{ for all }A\subseteq\pspace.
  \end{equation*}
\end{definition}

Equivalently, possibility measures can be defined as
supremum-preserving upper probabilities, i.e., as functionals $\Pi$
for which
\begin{equation*}
 \Pi(\cup_{A \in {\mathcal A}} A)=\sup_{A \in {\mathcal A}} \Pi(A)
 \ \forall {\mathcal A} \subseteq {\mathcal P}(\pspace).
\end{equation*}

If we write $\lnepossib$ for the conjugate lower probability of the upper
probability $\Pi$, then:
\begin{equation*}
  \lnepossib(A)=1-\Pi(A^c)=1-\sup_{x\in A^c}\pi(x). 
\end{equation*}

A possibility measure is maxitive, but not
all maxitive measures are possibility measures.

As an imprecise probability model, possibility measures are not as
expressive as for instance p-boxes---for example, the only
probability measures that can be represented by possibility measures
are the degenerate ones. This poor expressive power is also
illustrated by the fact that, for any event $A$:
\begin{align*}
\Pi(A)<1 &\implies \lnepossib(A)=0,
&&\text{ and }
\\
\lnepossib(A)>0 &\implies \Pi(A)=1,
\end{align*}
meaning that
every event has a trivial probability bound on at least one side. Their main
attraction is that calculations with them are very easy: to find the
upper (or lower) probability of any event, a simple supremum
suffices.

In the following sections, we characterize the circumstances under
which a possibility measure $\Pi$ is the natural extension of some p-box
$(\ldf,\udf)$. In order to do so, we first characterise the
conditions under which a p-box induces a maxitive measure.

\section{P-boxes as Maxitive Measures.}
\label{sec:pbox-as-max}

We show here that p-boxes $(\ldf,\udf)$ on any totally preordered space
where at least one of $\ldf$ or $\udf$ is $0$--$1$-valued are maxitive measures, and in this sense are closely related to possibility measures. We then derive a simple closed expression of the (upper)
natural extension of such p-boxes.

\subsection{A Necessary Condition for Maxitivity.}

\begin{proposition}\label{prop:pbox-possibility-zero-one}
 If the natural extension $\unepbox$ of a p-box $(\ldf,\udf)$
 is maximum preserving, then at least one of $\ldf$ or
 $\udf$ is $0$--$1$-valued.
\end{proposition}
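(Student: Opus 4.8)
```latex
\begin{proof}[Proof proposal]
The plan is to argue by contraposition: assuming that neither $\ldf$ nor
$\udf$ is $0$--$1$-valued, I will exhibit two events whose union violates
maximum-preservation under $\unepbox$. So suppose there exist $a$, $b\in\pspace$
with $0<\udf(a)<1$ and $0<\ldf(b)<1$. Using the explicit formulas for the natural
extension on intervals from Eq.~\eqref{eq:nex-intervals}, together with
Proposition~\ref{prop:nex-lattice} for the disjoint union, I would compute
$\unepbox$ on a carefully chosen pair of (essentially complementary) intervals
determined by $a$ and $b$, and on their union, and check that maxitivity fails.

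The natural candidates are $A=[0_\pspace,a]$ and $B=(b,1_\pspace]$ when $a\prec b$
(so that $A$ and $B$ are disjoint); then $\unepbox(A)=\udf(a)$,
$\unepbox(B)=1-\ldf(b)$, and since $A\cup B=(0_\pspace-,a]\cup(b,1_\pspace]\in\plattice$,
Proposition~\ref{prop:nex-lattice} gives
$\unepbox(A\cup B)=1-\max\{0,\ldf(b)-\udf(a)\}$. Maximum-preservation would force
$1-\max\{0,\ldf(b)-\udf(a)\}=\max\{\udf(a),1-\ldf(b)\}$, and one checks this identity
fails precisely because $\udf(a)$ and $\ldf(b)$ both lie strictly between $0$ and $1$:
if $\ldf(b)\le\udf(a)$ the left side is $1$ while the right side is $<1$; if
$\ldf(b)>\udf(a)$ the left side is $1-\ldf(b)+\udf(a)>1-\ldf(b)$ and also
$>\udf(a)$, again strictly exceeding the right side. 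The main obstacle is the
case analysis over the relative order of $a$ and $b$ (and the possibility $a\simeq b$):
when $a\succeq b$ the chosen intervals need not be disjoint, so one must instead
take, say, points $a'\preceq b'$ obtained by exploiting monotonicity of $\udf$ and
$\ldf$ — e.g.\ since $\udf$ is non-decreasing and takes a value strictly inside
$(0,1)$ somewhere, and likewise $\ldf$, one can find witnesses that are suitably
ordered, possibly after passing to $a-$ or $b-$. Handling these boundary/ordering
subtleties cleanly, and confirming the singleton or degenerate-interval cases via
Eq.~\eqref{eq:unex-singleton}, is where the real work lies; the algebraic
contradiction itself is immediate once the events are in hand.
\end{proof}
```
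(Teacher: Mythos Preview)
Your approach and the core algebraic contradiction are exactly those of the paper. The only difference is in how the ordering of the two witnesses is secured. The paper first proves a preliminary fact: there is no single $x$ with $0<\ldf(x)\le\udf(x)<1$ (using $[0_\pspace,x]$ and $(x,1_\pspace]$, whose union is all of $\pspace$, so $\unepbox=1$, while each piece has $\unepbox<1$). From this it follows that the sets $\{x:\udf(x)<1\}$ and $\{y:\ldf(y)>0\}$ are disjoint with the first preceding the second, so any $a$ in the former and $b$ in the latter automatically satisfy $a\prec b$, and your two-interval argument goes through without further case analysis.

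Your suggested fix for $a\succeq b$ via ``passing to $a-$ or $b-$'' is off-track; no limits are needed. If $a\succeq b$, monotonicity gives $\udf(b)\le\udf(a)<1$ and $\ldf(b)>0$, so the single point $x=b$ already satisfies $0<\ldf(b)\le\udf(b)<1$, and the partition $[0_\pspace,b]\cup(b,1_\pspace]=\pspace$ yields the contradiction directly (this is precisely the paper's preliminary step). So the ``real work'' you anticipated is in fact a one-line reduction, not a delicate boundary analysis.
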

\begin{proof}

We begin by showing that there is no $x \in \pspace$ such that
$0<\ldf(x)\leq \udf(x)<1$.
Assume ex absurdo that there is such an
$x$. For $\unepbox$ to be maximum preserving, we require that
\begin{equation*}
  \unepbox([0_\pspace,x]\cup (x,1_\pspace])=\max\{\unepbox([0_\pspace,x]),\unepbox((x,1_\pspace])\}
\end{equation*}
But this cannot be. The left hand side is $1$, whereas the right
hand side is strictly less than one, because, by
Eq.~\eqref{eq:basic-pbox},
\begin{align*}
  \unepbox([0_\pspace,x])&=\udf(x)<1, \\
  \unepbox((x,1_\pspace])&=1-\ldf(x)<1.
\end{align*}

Whence, for every $x\in\pspace$, at least one of $\ldf(x)=0$ or
$\udf(x)=1$ must hold. In other words, $\ldf(x)=0$ whenever
$\udf(x)<1$, and $\udf(x)=1$ whenever $\ldf(x)>0$ (see
Figure~\ref{fig:pbox-possibility-zero-one}).
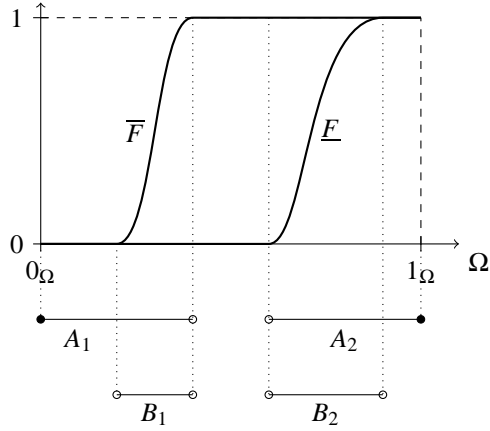
\begin{figure}
\begin{center}
\begin{tikzpicture}
\tikzstyle{blackdot}=[circle,draw=black,fill=black,scale=0.3]
\tikzstyle{opendot}=[circle,draw=black,solid,scale=0.3]
\draw[->] (-0.1,0) node[left] {$0$} -- (5.5,0) node[below right] {$\pspace$};
\draw (5,0.1) -- (5,-0.1) node[below] {$1_\pspace$};
\draw[->] (0,-0.1) node[below] {$0_\pspace$} -- (0,3.2);
\draw (0.1,3) -- (-0.1,3) node[left] {$1$};
\draw[thick] (0,0) -- (3,0) .. controls (3.5,0) and (3.5,3) .. (4.5,3) node[midway,right] {$\ldf$} -- (5,3);
\draw[thick] (0,0) -- (1,0) .. controls (1.5,0) and (1.5,3)  .. (2,3) node[midway,left] {$\udf$}  -- (5,3) ;
\draw[dashed] (5,0) -- (5,3);
\draw[dashed] (0,3) -- (5,3);
\draw (0,-1) node[blackdot]{} -- node[below,near start]{$A_1$} (2,-1) node[opendot]{};
\draw (3,-1) node[opendot]{} -- node[below]{$A_2$} (5,-1) node[blackdot]{};
\draw (1,-2) node[opendot]{} -- node[below]{$B_1$} (2,-2) node[opendot]{};
\draw (3,-2) node[opendot]{} -- node[below]{$B_2$} (4.5,-2) node[opendot]{};
\draw[dotted] (0,0) -- (0,-1);
\draw[dotted] (1,0) -- (1,-2);
\draw[dotted] (2,3) -- (2,-2);
\draw[dotted] (3,3) -- (3,-2);
\draw[dotted] (4.5,3) -- (4.5,-2);
\draw[dotted] (5,0) -- (5,-1);
\end{tikzpicture}
\caption{A p-box for the proof of Proposition~\ref{prop:pbox-possibility-zero-one}.}
\label{fig:pbox-possibility-zero-one}
\end{center}
\end{figure}
Hence, the sets
\begin{align*}
  A_1&:=\{x \in \pspace: \udf(x)<1\} \\
  A_2&:=\{y \in \pspace: \ldf(y)>0\}
\end{align*}
are disjoint, and $A_1\prec A_2$ in the sense that $x\prec y$ for
all $x\in A_1$ and $y\in A_2$. Indeed, if $x\in A_1$ and $y\in A_2$,
then $\udf(x)<1$, and $\udf(y)=1$ because $\ldf(y)>0$. These can
only hold simultaneously if $x\prec y$.

Note that $A_1$ is empty when $\udf(x)=1$ for all $x\in\pspace$, and
in this case the desired result is trivially established. $A_2$ is
non-empty because $\ldf(1_\pspace)=1$. Anyway, consider the sets
\begin{align*}
  B_1&:=\{x \in \pspace: 0<\udf(x)<1\}\subseteq A_1 \\
  B_2&:=\{y \in \pspace: 0<\ldf(y)<1\}\subseteq A_2
\end{align*}
The proposition is established if we can show that at least one of
these two sets is empty.

Suppose, ex absurdo, that both are non-empty. Pick any element $c
\in B_1$ and $d \in B_2$ and consider the set $C=[0_\pspace,c] \cup
(d,1_\pspace]$---note that $c\prec d$ because $c\in A_1$ and $d\in
A_2$, so $(c,d]$ is non-empty.
Whence, by Eq.~\eqref{eq:nex-lattice},
\begin{equation*}
 \unepbox([0_\pspace,c] \cup (d,1_\pspace])
 =1-\max\{0,\ldf(d)-\udf(c)\}.
\end{equation*}
Also, by Eq.~\eqref{eq:basic-pbox},
\begin{align*}
  \unepbox([0_\pspace,c])&=\udf(c), \\
  \unepbox((d,1_\pspace])&=1-\ldf(d).
\end{align*}
So, for $\unepbox$ to be maximum preserving, we require that
\begin{align*}
  1-\max\{0,\ldf(d)-\udf(c)\}=\max\{\udf(c),1-\ldf(d)\}.
\end{align*}
But this cannot hold. Indeed, because $0<\udf(c)<1$ and
$0<1-\ldf(d)<1$,
the above equality can only hold if $\ldf(d)-\udf(c)>0$---otherwise
the left hand side would be $1$ whereas the right hand side is
strictly less than $1$. So, effectively, we require that
\begin{align*}
  1-\ldf(d)+\udf(c)=\max\{\udf(c),1-\ldf(d)\}.
\end{align*}
This cannot hold, because the sum of two strictly positive numbers
(in this case $1-\ldf(d)$ and $\udf(c)$) is always strictly larger
than their maximum. We conclude that $\unepbox$
cannot be maximum preserving
if both $B_1$ and $B_2$ are
non-empty. In other words, at least one of $\ldf$ or $\udf$ must be
$0$--$1$-valued.
\end{proof}

\subsection{Sufficient Conditions for Maxitivity.} We derive sufficient conditions for the two different cases described by Proposition~\ref{prop:pbox-possibility-zero-one}, starting with  $0$--$1$-valued $\ldf$.

\subsubsection{Maxitivity for Zero-One Valued Lower Cumulative Distribution Functions.}

We first provide a simple expression for the natural extension of such p-boxes over events.

\begin{proposition}\label{prob:unex-0-1-ldf}
Let $(\ldf,\udf)$ be a p-box with $0$--$1$-valued $\ldf$, and
let $B=\{x \in \pspace^*\colon \ldf(x)=0\}$. Then, for any $A
\subseteq \pspace$,
\begin{align}
  \label{eq:unex-0-1-ldf:cases}
  \unepbox(A)
  &=
  \begin{cases}
    \inf_{x \in \pspace^*\colon A \cap B \preceq x} \udf(x)
    & \text{if }y\prec A\cap B^c\text{ for at least one }y\in B^c,
    \\
    1
    & \text{otherwise}.
  \end{cases}
  \\
  \label{eq:unex-0-1-ldf:min}
  &=
  \min_{y\in B^c}\inf_{x \in \pspace^*\colon A \cap [0_\pspace,y]\preceq x} \udf(x).
\end{align}
\end{proposition}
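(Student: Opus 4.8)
The plan is to compute $\unepbox(A)$ directly from the outer-measure formula \eqref{eq:outer-measure}, namely $\unepbox(A)=\inf_{C\in\plattice,\,A\subseteq C}\upboxlattice(C)$, exploiting the special structure forced by $\ldf$ being $0$--$1$-valued. The key simplification is that in Eq.~\eqref{eq:nex-lattice} each summand $\max\{0,\ldf(x_{2k})-\udf(x_{2k-1})\}$ is now either $0$ (when $\ldf(x_{2k})=0$) or $1-\udf(x_{2k-1})$ (when $\ldf(x_{2k})=1$). Since the total cannot exceed $1$, at most one summand is nonzero in any covering $C$ that is a candidate for the infimum; moreover one can always discard components of $C$ lying entirely inside $B$ (where $\ldf=0$) without increasing $\upboxlattice(C)$, because such components contribute $0$ on both sides of their defining endpoints. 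So an optimal cover has the form $C=(x_0,x_1]$ with $\ldf(x_0)=0$, i.e.\ $x_0\in B$, giving $\upboxlattice(C)=1-\max\{0,\ldf(x_2)-\udf(x_1)\}$ where $x_2=1_\pspace$; if $\ldf(1_\pspace)=1$ this is $\udf(x_1)$, and we are driven to cover $A$ by an interval $(x_0,x_1]$ with $x_0\in B$ as large as possible and then minimize $\udf(x_1)$.

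First I would set up the two cases. Write $A=(A\cap B)\cup(A\cap B^c)$. The ``otherwise'' case is when $A\cap B^c$ is \emph{cofinal-from-below with $B^c$}, i.e.\ there is no $y\in B^c$ with $y\prec A\cap B^c$; I claim $\unepbox(A)=1$ here. Indeed, since $B$ is a down-set and $B^c$ an up-set (as $\ldf$ is non-decreasing and $0$--$1$-valued), the failure of the displayed condition means that for every $y\in B^c$ there is a point of $A\cap B^c$ that is $\preceq y$, so $A\cap B^c$ reaches down to the very bottom of $B^c$; combined with $\ldf(1_\pspace)=1$, one checks that every cover $C\in\plattice$ of $A$ must have a component whose left endpoint lies in $B$, but this is not enough — rather, the point is that $A$ contains, for each $y \in B^c$, points arbitrarily low in $B^c$, so no cover can ``save'' mass $1-\udf(\cdot)$ at a single threshold, forcing $\upboxlattice(C)=1$. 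I would verify this by showing $\lnepbox(A)=0$, equivalently that for every $C\in\plattice$ covering $A$ we have $\upboxlattice(C)=1$: since the unique possibly-nonzero summand has the form $1-\udf(x_{2k-1})$ with $x_{2k}\in A\cap B^c$ forced below the gap, the cofinality kills it. Then, in the complementary case where some $y_0\in B^c$ satisfies $y_0\prec A\cap B^c$, I would show the optimal cover is $(x_0,x_1]$ with $x_0$ ranging over $B$ with $A\cap B\preceq x_0$ (so that $A\cap B\subseteq(0_\pspace-,x_0]$) — wait, more carefully: we need $A\subseteq(x_0,x_1]$, so $x_0 \prec A$; but $A\cap B$ may reach down to $0_\pspace$, so we instead use $x_0=0_\pspace-$ and split off $A\cap B$ cheaply. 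The correct optimal shape is $C=(0_\pspace-,x_1]$ with $A\subseteq[0_\pspace,x_1]$, i.e.\ $A\preceq x_1$, and then $\unepbox(A)=\inf\{\udf(x_1)\colon A\preceq x_1,\ x_1\in\pspace^*\}$ — but this ignores that we may do better by covering $A\cap B$ and $A\cap B^c$ separately. I would reconcile this: because $\udf$ may be small on part of $B^c$ too, the true infimum is $\inf_{x\colon A\cap B^c\preceq x,\ \text{with a gap below}}\udf(x)$, and that gap-below condition is exactly ``$\exists y\in B^c$, $y\prec A\cap B^c$''; meanwhile $A\cap B$ is absorbed for free via the component $(0_\pspace-,x_0]$ with $x_0\in B$, $A\cap B\preceq x_0$ — this contributes $\max\{0,\ldf(\text{next})-\udf(x_0)\}$ which, if the ``next'' left endpoint is again in $B$... one must be careful that two components both starting in $B$ is disallowed since endpoints strictly increase, so at most the structure is $(0_\pspace-,x_0]\cup(x_2,x_3]$ with $x_0\in B$, $x_2\in B^c$, and then $\upboxlattice=1-(1-\udf(x_2))-0=\udf(x_2)$. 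Hence $\unepbox(A)=\inf\{\udf(x)\colon x\in\pspace^*,\ A\cap B\preceq x,\ A\cap B^c\preceq x\}=\inf_{x\colon A\cap B\preceq x}\udf(x)$, which is \eqref{eq:unex-0-1-ldf:cases}.

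For \eqref{eq:unex-0-1-ldf:min}, I would argue that the ``min over $y\in B^c$'' formulation repackages both cases into one. For fixed $y\in B^c$, the inner quantity $\inf_{x\colon A\cap[0_\pspace,y]\preceq x}\udf(x)$ equals $1$ exactly when $A\cap[0_\pspace,y]$ is cofinal in $\pspace^*$ (no upper bound below $1_\pspace$ helps) — but since $y\in B^c\subseteq\pspace$, $A\cap[0_\pspace,y]$ is bounded above by $y$, so the inf is over a nonempty set and is at most $\udf(y)$. The minimum over $y\in B^c$ is then achieved by pushing $y$ down toward the bottom of $B^c$: if some $y_0\in B^c$ lies strictly below $A\cap B^c$ then $A\cap[0_\pspace,y_0]=A\cap B$ (since $A\cap B^c$ is entirely above $y_0$), recovering the first branch of \eqref{eq:unex-0-1-ldf:cases}; if no such $y_0$ exists, then every $y\in B^c$ satisfies $A\cap[0_\pspace,y]\supseteq(A\cap B^c)\cap[0_\pspace,y]\neq\emptyset$ with elements arbitrarily deep in $B^c$, and I would check the inner infimum is then $\ge\unepbox(A)=1$ in the limit, so the min is $1$, matching the second branch. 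The main obstacle I anticipate is the bookkeeping around $\pspace^*$, the element $0_\pspace-$, and strictness of the chain $x_0\prec\cdots\prec x_{2n+1}$: one must show rigorously that no cover with $\ge 2$ components beats the two-component (or one-component) cover, and handle edge cases where $A\cap B$ or $A\cap B^c$ is empty, where $A$ has no $\preceq$-supremum in $\pspace$, or where the relevant infima are not attained. These are routine once the ``at most one nonzero summand'' observation is in place, but getting the cofinality condition to line up precisely with the case split in \eqref{eq:unex-0-1-ldf:cases} is the delicate point.
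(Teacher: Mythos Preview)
Your approach differs from the paper's in one decisive respect: you work directly with the outer-measure formula $\unepbox(A)=\inf_{C\in\plattice,\,A\subseteq C}\upboxlattice(C)$ and try to reduce arbitrary covers to one- or two-component ones, whereas the paper passes to the conjugate and computes $\lnepbox(A^c)$ instead. The conjugate route is much cleaner: once one notes that at most one summand in $\sum_k\max\{0,\ldf(x_{2k+1})-\udf(x_{2k})\}$ can be nonzero (namely the unique one, if any, with $x_{2k}\in B$ and $x_{2k+1}\in B^c$), the optimal $D\subseteq A^c$ is automatically a \emph{single} interval $(x,y]$, and one gets immediately
\[
\unepbox(A)=\inf\bigl\{\udf(x):x\in B,\ y\in B^c,\ A\subseteq[0_\pspace,x]\cup(y,1_\pspace]\bigr\},
\]
after which the constraint splits cleanly into $A\cap B\preceq x$ and $y\prec A\cap B^c$, yielding \eqref{eq:unex-0-1-ldf:cases} with no cover bookkeeping at all.

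Your direct route is viable in principle, but the proposal has real gaps. First, the computation ``$\upboxlattice=1-(1-\udf(x_2))-0=\udf(x_2)$'' for the cover $(0_\pspace-,x_0]\cup(x_2,x_3]$ is wrong: the nonzero summand is $\max\{0,\ldf(x_2)-\udf(x_0)\}=1-\udf(x_0)$, so the value is $\udf(x_0)$, not $\udf(x_2)$; your final conclusion $\inf_{x\colon A\cap B\preceq x}\udf(x)$ is nonetheless correct, so this is presumably a slip, but it obscures the argument. Second, and more substantively, you never actually establish that a general multi-component cover can be replaced by a one- or two-component cover of equal $\upboxlattice$-value. This is true---if $k^*$ is the least index with $x_{2k^*}\in B^c$, then $C'=(0_\pspace-,x_{2k^*-1}]\cup(x_{2k^*},1_\pspace]$ still contains $A$ (the only gap of $C'$ is $(x_{2k^*-1},x_{2k^*}]$, which was already a gap of $C$) and has the same value $\udf(x_{2k^*-1})$---but this reduction is precisely what your approach requires and it is absent from the sketch. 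Third, your ``otherwise'' case is confused: you write ``verify this by showing $\lnepbox(A)=0$, equivalently that for every $C$ covering $A$ we have $\upboxlattice(C)=1$'', but $\lnepbox(A)=0$ is \emph{not} equivalent to $\unepbox(A)=1$; the correct statement is $\lnepbox(A^c)=0$, and once you are computing $\lnepbox$ of a complement you have essentially adopted the paper's method. Your argument for \eqref{eq:unex-0-1-ldf:min} from \eqref{eq:unex-0-1-ldf:cases} is close to the paper's and essentially sound.
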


In the above, $A\preceq x$ means $z\preceq x$ for all $z\in A$, and
similarly $y\prec A$ means $y\prec z$ for all $z\in A$. For example,
it holds that $\emptyset\preceq x$ and $y\prec\emptyset$ for all $x$ and $y$.

\begin{proof}
We deduce from Eq.~\eqref{eq:outer-measure} and from the conjugacy
between $\lnepbox$ and $\unepbox$ that for any $A\subseteq\pspace$,
\begin{equation*}
  \lnepbox(A)
  =\sup_{(x_0,x_1]\cup\dots\cup(x_{2n},x_{2n+1}]\subseteq
  A}\sum_{k=0}^{n}\max\{0,\ldf(x_{2k+1})-\udf(x_{2k})\}.
\end{equation*}
All the terms in this sum are zero except possibly for one (if it
exists) where $x_{2k} \in B, x_{2k+1} \in B^c$, where we get
$1-\udf(x_{2k})$. Aside, as subsets of $\pspace^*$, note that both
$B$ and $B^c$ are non-empty: $0_\pspace-\in B$ and $1_\pspace\in
B^c$. Consequently,
\begin{align*}
  \lnepbox(A)&=
   1-\inf_{x,y\colon x\in B,\,y\in B^c,\,(x,y]\subseteq A}\udf(x);
\end{align*}
and therefore
\begin{align*}
  \unepbox(A)
  &=
  \inf_{x,y\colon x\in B,\,y\in B^c,\,(x,y]\subseteq A^c}\udf(x)
  \\
  &=
  \inf_{x,y\colon x\in B,\,y\in B^c,\,A\subseteq [0_\pspace,x]\cup(y,1_\pspace]}\udf(x)
\end{align*}
where it is understood that the infimum evaluates to $1$ whenever
there are no $x\in B$ and $y\in B^c$ such that $A\subseteq
[0_\pspace,x]\cup(y,1_\pspace]$.

Now, for any $x \in B$ and $y \in B^c$, it holds that $A\subseteq
[0_\pspace,x]\cup(y,1_\pspace]$ if and only if
\begin{gather*}
A\cap B\subseteq ([0_\pspace,x]\cup(y,1_\pspace])\cap B=[0_\pspace,x]
\text{ and}
\\
A\cap B^c\subseteq ([0_\pspace,x]\cup(y,1_\pspace])\cap B^c=(y,1_\pspace],
\end{gather*}
that is, if and only if
\begin{gather*}
A\cap B\preceq x
\text{ and }
y\prec A\cap B^c.
\end{gather*}
Hence, if there is an $y \in B^c$ such that
$y\prec A\cap B^c$, then:
\begin{enumerate}[(i)]
\item either there is no $x \in B$ such that $A \cap B\preceq x$, whence
\begin{equation*}
 \unepbox(A)=1=\inf_{x \in \pspace^*\colon A \cap B\preceq x}
 \udf(x),
\end{equation*}
taking into account that for any $x \in \pspace^*$ such that $A \cap B\preceq x$
it must be that $x \in B^c$, whence
$\ldf(x)=\udf(x)=1$;
\item or there is some $x \in B$
such that $A \cap B\preceq x$, in which case
\begin{equation*}
  \unepbox(A)
  =\inf_{x \in B\colon A \cap B\preceq x}\udf(x)
  =\inf_{x \in \pspace^*\colon A \cap B\preceq x}\udf(x),
\end{equation*}
where the second equality follows from the monotonicity of $\udf$.
\end{enumerate}
This establishes Eq.~\eqref{eq:unex-0-1-ldf:cases}.

We now turn to proving Eq.~\eqref{eq:unex-0-1-ldf:min}.
In case $y\prec A\cap B^c$ for at least one
$y\in B^c$, it follows that
\begin{equation*}
  \unepbox(A)
  =\inf_{x \in \pspace^*\colon A \cap B\preceq x}\udf(x)
\end{equation*}
But in this case, $A\cap B=A\cap[0_\pspace,y']$ for any $y'\in B^c$
such that $y'\preceq y$, because
\begin{equation*}
  A\cap [0,y']=A\cap [0_\pspace,y']\cap(B\cup B^c)
  =(A\cap B\cap [0_\pspace,y'])\cup (A\cap B^c\cap [0_\pspace,y'])
  =A\cap B
\end{equation*}
as $B\cap [0_\pspace,y']=B$ and $A\cap B^c\cap
[0_\pspace,y']=\emptyset$ because $y'\preceq y$ and $y\prec A\cap
B^c$. So, by the monotonicity of $\udf$,
Eq.~\eqref{eq:unex-0-1-ldf:min} follows.

In case $y\not\prec A\cap B^c$ for all
$y\in B^c$, it follows that
\begin{equation*}
  \unepbox(A)
  =1
  =\udf(x)
\end{equation*}
for all $x$ in $B^c$---indeed, because $A\cap [0,y]\cap
B^c\neq\emptyset$ for every $y\in B^c$, it holds that $A\cap [0,y]\preceq x$
implies $x \in B^c$, and hence $\udf(x)=1$.
Again, Eq.~\eqref{eq:unex-0-1-ldf:min} follows.
\end{proof}

A few common important special cases are summarized in the following corollary:

\begin{corollary}
  Let $(\ldf,\udf)$ be a p-box with $0$--$1$-valued $\ldf$, and let
  $B=\{x \in \pspace^*\colon \ldf(x)=0\}$. If
  $\pspace/\simeq$ is order complete,
  then, for any $A \subseteq \pspace$,
  \begin{equation}\label{eq:unex-0-1-ldf-order-complete}
    \unepbox(A)=\min_{y\in B^c}\udf(\sup A\cap [0_\pspace,y]).
  \end{equation}
  If, in addition, $B^c$ has a minimum, then
  \begin{equation}\label{eq:unex-0-1-ldf-order-complete-minimum}
    \unepbox(A)=\udf(\sup A\cap [0_\pspace,\min B^c]).
  \end{equation}
  If, in addition, $B^c=[1_\pspace]_\simeq$ (this occurs exactly when
  $\ldf$ is vacuous, i.e. $\ldf=I_{[1_\pspace]_\simeq}$), then
  \begin{equation}\label{eq:unex-0-1-ldf-order-complete-vacuous}
    \unepbox(A)=\udf(\sup A).
  \end{equation}
\end{corollary}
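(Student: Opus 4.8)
The plan is to derive the corollary directly from Eq.~\eqref{eq:unex-0-1-ldf:min} of Proposition~\ref{prob:unex-0-1-ldf}, specializing the inner infimum under the successively stronger hypotheses. Recall that Eq.~\eqref{eq:unex-0-1-ldf:min} gives
\begin{equation*}
  \unepbox(A)=\min_{y\in B^c}\inf_{x \in \pspace^*\colon A \cap [0_\pspace,y]\preceq x} \udf(x),
\end{equation*}
so the task reduces to showing that, for each fixed $y\in B^c$, the inner infimum $\inf\{\udf(x)\colon x\in\pspace^*,\, A\cap[0_\pspace,y]\preceq x\}$ equals $\udf(\sup(A\cap[0_\pspace,y]))$ when $\pspace/\simeq$ is order complete. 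First I would observe that order completeness guarantees that $s:=\sup(A\cap[0_\pspace,y])$ exists in $\pspace/\simeq$ (with the convention $\sup\emptyset=0_\pspace-$, which is consistent since $\udf(0_\pspace-)=0$ and $\emptyset\preceq x$ for all $x$). The set of $x\in\pspace^*$ satisfying $A\cap[0_\pspace,y]\preceq x$ is exactly the set of upper bounds of $A\cap[0_\pspace,y]$, i.e. $\{x\colon x\succeq s\}$. Since $\udf$ is non-decreasing, $\inf_{x\succeq s}\udf(x)=\udf(s)$, the infimum being attained at $x=s$ itself. This yields Eq.~\eqref{eq:unex-0-1-ldf-order-complete}. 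One mild subtlety to check is that $\udf$ is constant on $\simeq$-classes, so the value $\udf(\sup(A\cap[0_\pspace,y]))$ is well-defined even though the supremum is really a class in $\pspace/\simeq$; this follows because $\udf$ is non-decreasing, as noted in Section~\ref{sec:charac}.

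For Eq.~\eqref{eq:unex-0-1-ldf-order-complete-minimum}, suppose $B^c$ has a minimum $m=\min B^c$. The claim is that the minimum over $y\in B^c$ of $\udf(\sup(A\cap[0_\pspace,y]))$ is attained at $y=m$. Since $[0_\pspace,m]\subseteq[0_\pspace,y]$ for every $y\in B^c$, we have $A\cap[0_\pspace,m]\subseteq A\cap[0_\pspace,y]$, hence $\sup(A\cap[0_\pspace,m])\preceq\sup(A\cap[0_\pspace,y])$, and monotonicity of $\udf$ gives $\udf(\sup(A\cap[0_\pspace,m]))\le\udf(\sup(A\cap[0_\pspace,y]))$. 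Thus $y=m$ realizes the minimum, proving Eq.~\eqref{eq:unex-0-1-ldf-order-complete-minimum}.

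Finally, for Eq.~\eqref{eq:unex-0-1-ldf-order-complete-vacuous}, assume $B^c=[1_\pspace]_\simeq$. I would first verify the parenthetical remark: $B^c=[1_\pspace]_\simeq$ means $\ldf(x)=0$ for all $x\prec 1_\pspace$ and $\ldf(x)=1$ for $x\simeq 1_\pspace$, which is precisely $\ldf=I_{[1_\pspace]_\simeq}$ (and conversely). In this case $\min B^c=1_\pspace$ (any representative of the class), so $[0_\pspace,\min B^c]=\pspace$ and $A\cap[0_\pspace,\min B^c]=A$; substituting into Eq.~\eqref{eq:unex-0-1-ldf-order-complete-minimum} gives $\unepbox(A)=\udf(\sup A)$. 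I do not anticipate a serious obstacle here; the only point requiring a little care is the handling of the empty intersection and the identification of ``$\sup$ in $\pspace$'' with ``$\sup$ in the quotient $\pspace/\simeq$'', which is exactly what order completeness of $\pspace/\simeq$ is there to license, together with the fact that $\udf$ factors through $\pspace/\simeq$.
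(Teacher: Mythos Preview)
Your proposal is correct and follows exactly the route the paper intends: the corollary is stated without proof, immediately after Proposition~\ref{prob:unex-0-1-ldf}, as a direct specialization of Eq.~\eqref{eq:unex-0-1-ldf:min} under the successive hypotheses. Your argument---identifying the set $\{x\in\pspace^*\colon A\cap[0_\pspace,y]\preceq x\}$ with the upper bounds $\{x\colon x\succeq\sup(A\cap[0_\pspace,y])\}$ via order completeness, then using monotonicity of $\udf$---supplies precisely the details the paper omits; the analogous Corollary~\ref{cor:unex-0-1-udf:order:complete} is given a one-line proof of the same flavour.
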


Note that Eq.~\eqref{eq:unex-0-1-ldf-order-complete-vacuous} is
essentially due to \cite[paragraph preceeding Theorem~11]{cooman1998}---they work
with chains and multivalued mappings, whereas we work with total preorders. We are now ready to show that the considered p-boxes are maxitive measures.

\begin{proposition}\label{prop:unex-0-1-ldf-max}
Let $(\ldf,\udf)$ be a p-box where $\ldf$ is $0$--$1$-valued.
Then $\unepbox$ is maximum-preserving.
\end{proposition}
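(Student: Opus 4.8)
The plan is to use the closed expression for $\unepbox$ given by Eq.~\eqref{eq:unex-0-1-ldf:min} in Proposition~\ref{prob:unex-0-1-ldf} and show directly that $\unepbox(A\cup A')=\max\{\unepbox(A),\unepbox(A')\}$ for arbitrary $A$, $A'\subseteq\pspace$. Since $\unepbox$ is monotone (being a coherent upper probability), the inequality $\unepbox(A\cup A')\geq\max\{\unepbox(A),\unepbox(A')\}$ is immediate, so the whole content is the reverse inequality $\unepbox(A\cup A')\leq\max\{\unepbox(A),\unepbox(A')\}$.

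For the reverse inequality, fix $y\in B^c$ and look at the inner quantity $\inf_{x\in\pspace^*\colon (A\cup A')\cap[0_\pspace,y]\preceq x}\udf(x)$. The key observation is that $(A\cup A')\cap[0_\pspace,y]=(A\cap[0_\pspace,y])\cup(A'\cap[0_\pspace,y])$, and that a point $x\in\pspace^*$ dominates this union (in the $\preceq$ sense) if and only if it dominates both $A\cap[0_\pspace,y]$ and $A'\cap[0_\pspace,y]$. Hence the set of admissible $x$ for the union is the intersection of the admissible sets for $A$ and for $A'$; taking an infimum of $\udf$ over a smaller set yields a larger value, so
\begin{equation*}
  \inf_{x\colon (A\cup A')\cap[0_\pspace,y]\preceq x}\udf(x)
  \geq
  \max\left\{
  \inf_{x\colon A\cap[0_\pspace,y]\preceq x}\udf(x),\
  \inf_{x\colon A'\cap[0_\pspace,y]\preceq x}\udf(x)
  \right\}.
\end{equation*}
This is the wrong direction, so I cannot simply push the maximum through; instead I would argue the other way. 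Since $\pspace$ is totally preordered, for each $y\in B^c$ one of the two sets $A\cap[0_\pspace,y]$, $A'\cap[0_\pspace,y]$ has the larger (or equal) set of upper bounds — more precisely, whichever of the two ``reaches higher'' has the smaller admissible $x$-set — so in fact the admissible set for the union coincides with the admissible set for one of the two individual sets, namely the one with the larger supremum of upper bounds. Consequently, for each fixed $y$,
\begin{equation*}
  \inf_{x\colon (A\cup A')\cap[0_\pspace,y]\preceq x}\udf(x)
  \in\left\{
  \inf_{x\colon A\cap[0_\pspace,y]\preceq x}\udf(x),\
  \inf_{x\colon A'\cap[0_\pspace,y]\preceq x}\udf(x)
  \right\},
\end{equation*}
and in particular it is at most the maximum of the two.

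The remaining subtlety is the outer minimisation over $y\in B^c$: we have, for every $y\in B^c$, that the $y$-term for $A\cup A'$ equals the $y$-term for $A$ or the $y$-term for $A'$, but which one it is may depend on $y$. I would finish by letting $y^\star\in B^c$ be a value (or a minimising sequence, if no minimiser exists) attaining $\unepbox(A\cup A')=\min_{y\in B^c}(\cdots)$; at that $y^\star$ the term equals either $\inf_{x\colon A\cap[0_\pspace,y^\star]\preceq x}\udf(x)\geq\unepbox(A)$ or $\inf_{x\colon A'\cap[0_\pspace,y^\star]\preceq x}\udf(x)\geq\unepbox(A')$, since $\unepbox(A)$ and $\unepbox(A')$ are themselves infima over all $y\in B^c$. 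Either way $\unepbox(A\cup A')\geq\min\{\unepbox(A),\unepbox(A')\}$, which is the wrong bound again — so I must be more careful. The correct finish: the $y^\star$-term equals the $y^\star$-term of whichever of $A$, $A'$ has the larger upper-bound reach at level $y^\star$; but that same set has a $\udf$-infimum at $y^\star$ that is no larger than at any other $y$, and since one of $A$, $A'$ is ``uniformly'' the reaching set is false in general, I should instead split into cases according to which of the two possibilities, (i) $y\prec (A\cup A')\cap B^c$ for some $y\in B^c$, or (ii) not, using the cleaner form Eq.~\eqref{eq:unex-0-1-ldf:cases}, and in case (i) observe that $(A\cup A')\cap B = (A\cap B)\cup(A'\cap B)$ so $\inf_{x\colon (A\cup A')\cap B\preceq x}\udf(x)$ is, by total preorder, equal to $\inf_{x\colon A\cap B\preceq x}\udf(x)$ or $\inf_{x\colon A'\cap B\preceq x}\udf(x)$ (whichever $B$-part reaches higher), each of which is $\leq$ the corresponding $\unepbox$; and in case (ii) both $\unepbox(A\cup A')=1$ and at least one of $\unepbox(A)$, $\unepbox(A')$ equals $1$ because $(A\cup A')\cap B^c$ having no strict lower bound in $B^c$ forces the same for $A\cap B^c$ or $A'\cap B^c$. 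The main obstacle is precisely this last bookkeeping: making the ``which set reaches higher'' dichotomy rigorous using only transitivity and totality of $\preceq$, and handling the edge cases where $A\cap B$ or $A\cap B^c$ is empty (recall the conventions $\emptyset\preceq x$ and $y\prec\emptyset$ for all $x$, $y$), so that the argument does not secretly assume suprema or minima exist.
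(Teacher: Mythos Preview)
After the detours via Eq.~\eqref{eq:unex-0-1-ldf:min} (which you rightly abandon), the approach you settle on---split into the two branches of Eq.~\eqref{eq:unex-0-1-ldf:cases}, and in the first branch use totality of $\preceq$ to see that the upper-bound sets $\{x\in\pspace^*\colon A\cap B\preceq x\}$ are nested by inclusion, so one of them equals their intersection $\{x\colon(A\cup A')\cap B\preceq x\}$---is exactly the paper's proof. The paper phrases the nesting as: because $\mathcal{A}$ is finite, there is an $A'\in\mathcal{A}$ with $\{x\colon A'\cap B\preceq x\}=\bigcap_{A\in\mathcal{A}}\{x\colon A\cap B\preceq x\}=\{x\colon(\cup_{A\in\mathcal{A}}A)\cap B\preceq x\}$, and then takes the $\udf$-infimum.

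Your residual worries are not real obstacles. In your case~(i), once $y\prec(A\cup A')\cap B^c$ for some $y\in B^c$, the same $y$ witnesses $y\prec A\cap B^c$ and $y\prec A'\cap B^c$ (these are subsets), so $A$ and $A'$ are also in the first branch of Eq.~\eqref{eq:unex-0-1-ldf:cases}; hence your ``$\leq$ the corresponding $\unepbox$'' is in fact an equality, and you obtain $\unepbox(A\cup A')=\max\{\unepbox(A),\unepbox(A')\}$ directly. In your case~(ii) the contrapositive is cleanest: if $y_1\prec A\cap B^c$ and $y_2\prec A'\cap B^c$ with $y_1,y_2\in B^c$, then whichever of $y_1,y_2$ is $\preceq$-smaller is a strict lower bound in $B^c$ for $(A\cup A')\cap B^c$. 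The empty-set edge cases are absorbed by the stated conventions $\emptyset\preceq x$ and $y\prec\emptyset$: an empty $A\cap B$ gives the full upper-bound set $\pspace^*$, which is automatically the larger of the two nested sets, so the dichotomy goes through without special handling.
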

\begin{proof}
Consider a finite collection $\mathcal{A}$ of subsets of $\pspace$.
If there are $A \in \mathcal{A}$ such that, for all $y \in B^c$, $y\not\prec A\cap B^c$, then
$\unepbox(A)=1=\unepbox(\cup_{A \in \mathcal{A}} A)$
by Eq.~\eqref{eq:unex-0-1-ldf:cases},
establishing the desired result for this case.

So, from now on, we may assume that, for every $A \in \mathcal{A}$, there is
a $y_A\in B^c$ such that $y_A\prec A\cap B^c$.
With $y=\min_{A \in\mathcal{A}} y_A \in B^c$,
it holds that
$y\prec \cup_{A \in \mathcal{A}} A\cap B^c$,
and so, by
Eq.~\eqref{eq:unex-0-1-ldf:cases},
\begin{align*}
  \unepbox(A)
  &=\inf_{x \in \pspace^*\colon A \cap B\preceq x} \udf(x)
  \text{ for every } A \in \mathcal{A}, \text{ and }\\
  \unepbox(\cup_{A \in \mathcal{A}}A)
  &=\inf_{x \in \pspace^*\colon \cup_{A \in \mathcal{A}} A \cap B \preceq x} \udf(x).
\end{align*}

Now, because $\mathcal{A}$ is finite,
there is an $A'\in\mathcal{A}$ such
that
\begin{align*}
  \{x \in \pspace^*\colon A' \cap B \preceq x\}
  &=
  \cap_{A \in\mathcal{A}} \{x \in \pspace^*\colon A \cap B \preceq x\}
  \\
  \intertext{and because
    $\cup_{A\in\mathcal{A}} A \cap B \preceq x$ if and only if $A\cap B\preceq x$ for
    all $A\in\mathcal{A}$,}
  &=
  \{x \in \pspace^*\colon \cup_{A \in \mathcal{A}} A\cap B\preceq x\}.
\end{align*}
Consequently,
\begin{multline*}
  \max_{A\in\mathcal{A}}\unepbox(A)
  =\max_{A\in\mathcal{A}}\inf_{x \in \pspace^*\colon A \cap B\preceq x}\udf(x)
  \\
  \geq \inf_{x\in \pspace^*\colon A' \cap B\preceq x}\udf(x)
  =\inf_{x\in \pspace^*\colon \cup_{A\in\mathcal{A}}A \cap B\preceq x}\udf(x)
  =\unepbox(\cup_{A\in\mathcal{A}}A).
\end{multline*}
The converse inequality follows from the
coherence of $\unepbox$. Concluding,
\begin{equation*}
  \max_{A\in\mathcal{A}}\unepbox(A)=\unepbox\left(\cup_{A\in\mathcal{A}}A\right)
\end{equation*}
for any finite collection $\mathcal{A}$ of subsets of $\pspace$.
\end{proof}

\subsubsection{Maxitivity for Zero-One Valued Upper Cumulative Distribution Functions.}

Let us now consider the case of $0$--$1$-valued $\udf$.

\begin{proposition}\label{prob:unex-0-1-udf}
Let $(\ldf,\udf)$ be a p-box with $0$--$1$-valued $\udf$, and
let $C=\{x \in \pspace^*\colon \udf(x)=0\}$. Then, for any $A
\subseteq \pspace$,
\begin{align}
  \label{eq:unex-0-1-udf:cases}
  \unepbox(A)
  &=
  \begin{cases}
    1-\sup_{y \in \pspace^*\colon y\prec A \cap C^c} \ldf(y)
    & \text{if }A\cap C\preceq x\text{ for at least one }x\in C,
    \\
    1
    & \text{otherwise}.
  \end{cases}
  \\
  \label{eq:unex-0-1-udf:max}
  &=
  1-\max_{x \in C}\sup_{y \in \pspace^*\colon y\prec A \cap (x,1_\pspace]} \ldf(y).
\end{align}
\end{proposition}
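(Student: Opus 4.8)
The plan is to mirror, step by step, the proof of Proposition~\ref{prob:unex-0-1-ldf}, exploiting the conjugacy between $\unepbox$ and $\lnepbox$ to trade the $0$--$1$-valued $\udf$ case for something structurally identical to the $0$--$1$-valued $\ldf$ case. Concretely, I would start from the dual to Eq.~\eqref{eq:outer-measure}, namely the inner-measure expression for $\lnepbox$ obtained by conjugacy from Proposition~\ref{prop:nex-lattice}:
\begin{equation*}
  \unepbox(A)
  =1-\sup_{(x_0,x_1]\cup\dots\cup(x_{2n},x_{2n+1}]\subseteq A^c}\sum_{k=0}^{n}\max\{0,\ldf(x_{2k+1})-\udf(x_{2k})\}.
\end{equation*}
Since $\udf$ is $0$--$1$-valued, for a term $\max\{0,\ldf(x_{2k+1})-\udf(x_{2k})\}$ to be nonzero we need $\udf(x_{2k})=0$, i.e. $x_{2k}\in C$, and then the term equals $\ldf(x_{2k+1})$; moreover since $\ldf$ is non-decreasing and $\ldf\le\udf$, for the value $\ldf(x_{2k+1})$ to survive we need $x_{2k+1}\in C^c$. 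As $\ldf$ is non-decreasing, at most one such term contributes and the sup over chains collapses to a single interval $(x,y]\subseteq A^c$ with $x\in C$, $y\in C^c$. Note $C$ and $C^c$ are both non-empty as subsets of $\pspace^*$: $0_\pspace-\in C$ and $1_\pspace\in C^c$. This yields
\begin{equation*}
  \unepbox(A)=1-\sup_{x,y\colon x\in C,\,y\in C^c,\,(x,y]\subseteq A^c}\ldf(y)
  =1-\sup_{x,y\colon x\in C,\,y\in C^c,\,A\subseteq[0_\pspace,x]\cup(y,1_\pspace]}\ldf(y),
\end{equation*}
with the convention that the supremum over an empty index set is $0$, so $\unepbox(A)=1$ in that degenerate case.

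Next, I would translate the condition $A\subseteq[0_\pspace,x]\cup(y,1_\pspace]$, for $x\in C$ and $y\in C^c$, into conditions on $A\cap C$ and $A\cap C^c$ exactly as in the previous proof: intersecting with $C$ and $C^c$ and using that $[0_\pspace,x]\cap C^c=\emptyset$ and $(y,1_\pspace]\cap C=\emptyset$ (because $\udf(x)=0$ forces $\udf(z)=0$ for $z\preceq x$, and $\udf(y)=1$ forces $\udf(z)=1$ for $z\succeq y$), one gets that $A\subseteq[0_\pspace,x]\cup(y,1_\pspace]$ holds if and only if $A\cap C\preceq x$ and $y\prec A\cap C^c$. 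Then I would split into the two cases. If there is no $x\in C$ with $A\cap C\preceq x$, then there is no admissible pair and $\unepbox(A)=1$, which matches the ``otherwise'' branch of Eq.~\eqref{eq:unex-0-1-udf:cases} (since $A\cap C\preceq x$ failing for all $x\in C$ is exactly the stated complementary condition, given that any $x\in\pspace^*$ with $A\cap C\preceq x$ and $x\succeq\sup(A\cap C)$ lying in $C^c$ would give $\ldf(x)=\udf(x)=1$ — but that branch is handled in the first case). If instead some $x\in C$ satisfies $A\cap C\preceq x$, then the supremum reduces, by monotonicity of $\ldf$ and the fact that $\ldf=0$ on $C$, to $\sup_{y\in\pspace^*\colon y\prec A\cap C^c}\ldf(y)$, giving the first branch of Eq.~\eqref{eq:unex-0-1-udf:cases}. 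One mild subtlety to check is that extending the range of $y$ from $C^c$ to all of $\pspace^*$ in that supremum does not change its value: any $y\in C$ has $\ldf(y)=0$, so adding those $y$'s to the index set cannot increase the supremum (and does not decrease it either, since the original index set is still included whenever non-empty; when it is empty the value is $0=\ldf(0_\pspace-)$ anyway).

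Finally, for Eq.~\eqref{eq:unex-0-1-udf:max}, I would argue as in the passage from Eq.~\eqref{eq:unex-0-1-ldf:cases} to Eq.~\eqref{eq:unex-0-1-ldf:min}. For each fixed $x\in C$, the quantity $\sup_{y\in\pspace^*\colon y\prec A\cap(x,1_\pspace]}\ldf(y)$ should be shown to equal, in the ``good'' case, $\sup_{y\in\pspace^*\colon y\prec A\cap C^c}\ldf(y)$ whenever $x$ is large enough that $A\cap C\preceq x$ — here one uses $A\cap(x,1_\pspace]=A\cap C^c$ for such $x$, since $A\cap C\cap(x,1_\pspace]=\emptyset$ and $C^c\subseteq(x,1_\pspace]$ once $x\succeq\sup(A\cap C)$, more precisely once $A\cap C\preceq x$ — and then taking the maximum over $x\in C$ picks out that common value; in the ``bad'' case where $A\cap C\not\preceq x$ for every $x\in C$, one checks that $A\cap(x,1_\pspace]$ meets $C$ for every $x\in C$, so $y\prec A\cap(x,1_\pspace]$ forces $y\in C$ hence $\ldf(y)=0$, giving inner supremum $0$ for each $x$ and thus $1-\max_x 0=1$, matching the ``otherwise'' branch. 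I expect the main obstacle to be this last bookkeeping: correctly handling the edge cases (empty $A\cap C^c$, empty $A\cap C$, the role of $0_\pspace-$ and $1_\pspace$) and verifying that $A\cap(x,1_\pspace]=A\cap C^c$ under the hypothesis $A\cap C\preceq x$ with $x\in C$, so that the ``$\min$/$\max$'' reformulation is genuinely equivalent to the case-split form and not merely a lower or upper bound. Everything else is a routine dualization of the already-established companion proposition.
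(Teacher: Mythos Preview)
Your proposal is correct and follows essentially the same route as the paper's own proof: both start from the inner-measure/conjugacy expression for $\lnepbox$, collapse the sum to a single interval $(x,y]$ with $x\in C$, $y\in C^c$, translate $A\subseteq[0_\pspace,x]\cup(y,1_\pspace]$ into the pair of conditions $A\cap C\preceq x$ and $y\prec A\cap C^c$, and then split into the two cases; the passage to Eq.~\eqref{eq:unex-0-1-udf:max} via $A\cap(x,1_\pspace]=A\cap C^c$ for $x\in C$ with $A\cap C\preceq x$, together with the ``bad case'' argument that $A\cap(x,1_\pspace]$ meets $C$ so any admissible $y$ lies in $C$, is exactly what the paper does. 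The only cosmetic differences are that you justify the extension of the supremum from $C^c$ to $\pspace^*$ by $\ldf|_C=0$ whereas the paper invokes monotonicity of $\ldf$, and that your justification ``as $\ldf$ is non-decreasing, at most one such term contributes'' would be cleaner if phrased via the down-set structure of $C$ (the ordered chain $x_0\prec\dots\prec x_{2n+1}$ crosses from $C$ to $C^c$ at most once); neither affects correctness.
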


\begin{proof}
We deduce from Eq.~\eqref{eq:outer-measure} and from the conjugacy
between $\lnepbox$ and $\unepbox$ that for any $A\subseteq\pspace$,
\begin{equation*}
  \lnepbox(A)
  =\sup_{(x_0,x_1]\cup\dots\cup(x_{2n},x_{2n+1}]\subseteq
  A}\sum_{k=0}^{n}\max\{0,\ldf(x_{2k+1})-\udf(x_{2k})\}.
\end{equation*}
All the terms in this sum are zero except possibly for one (if it
exists) where $x_{2k} \in C, x_{2k+1} \in C^c$, where we get
$\ldf(x_{2k+1})$. Aside, as subsets of $\pspace^*$, note that both
$C$ and $C^c$ are non-empty: $0_\pspace-\in C$ and $1_\pspace\in
C^c$. Consequently,
\begin{align*}
  \lnepbox(A)&=
   \sup_{x,y\colon x\in C,\,y\in C^c,\,(x,y]\subseteq A}\ldf(y);
\end{align*}
and therefore
\begin{align*}
  \unepbox(A)
  &=
  1-\sup_{x,y\colon x\in C,\,y\in C^c,\,(x,y]\subseteq A^c}\ldf(y)
  \\
  &=
  1-\sup_{x,y\colon x\in C,\,y\in C^c,\,A\subseteq [0_\pspace,x]\cup(y,1_\pspace]}\ldf(y)
\end{align*}
where it is understood that the supremum evaluates to $0$ whenever
there are no $x\in C$ and $y\in C^c$ such that $A\subseteq
[0_\pspace,x]\cup(y,1_\pspace]$.

Now, for any $x \in C$ and $y \in C^c$, it holds that $A\subseteq
[0_\pspace,x]\cup(y,1_\pspace]$ if and only if
\begin{gather*}
A\cap C\subseteq ([0_\pspace,x]\cup(y,1_\pspace])\cap
C=[0_\pspace,x] \text{ and}
\\
A\cap C^c\subseteq ([0_\pspace,x]\cup(y,1_\pspace])\cap C^c=(y,1_\pspace],
\end{gather*}
that is, if and only if
\begin{gather*}
A\cap C\preceq x
\text{ and }
y\prec A\cap C^c.
\end{gather*}
Hence, if there is an $x \in C$ such that
$A\cap C\preceq x$, then:
\begin{enumerate}[(i)]
\item either there is no $y \in C^c$ such that $y\prec A \cap C^c$, whence
\begin{equation*}
 \unepbox(A)=1=1-\sup_{y \in \pspace^*\colon y\prec A \cap C^c}
 \ldf(y),
\end{equation*}
taking into account that for any $y \in \pspace^*$ such that $y\prec A \cap C^c$
it must be that $y \in C$, whence
$\ldf(y)=\udf(y)=0$;
\item or there is some $y \in C^c$
such that $y\prec A \cap C^c$, in which case
\begin{equation*}
  \unepbox(A)
  =1-\sup_{y \in C^c\colon y\prec A \cap C^c}\ldf(y)
  =1-\sup_{y \in \pspace^*\colon y\prec A \cap C^c}\ldf(y),
\end{equation*}
where the second equality follows from the monotonicity of $\ldf$.
\end{enumerate}
This establishes Eq.~\eqref{eq:unex-0-1-udf:cases}.

We now turn to proving Eq.~\eqref{eq:unex-0-1-udf:max}. In case
$A\cap C\preceq x$ for at least one $x\in C$, it follows that
\begin{equation*}
  \unepbox(A)
  =1-\sup_{y \in \pspace^*\colon y\prec A \cap C^c}\ldf(y).
\end{equation*}
But in this case, $A\cap C^c=A\cap(x',1_\pspace]$ for any $x'\in C$
such that $x'\succeq x$, because
\begin{equation*}
  A\cap (x',1_\pspace]=A\cap (x',1_\pspace]\cap(C\cup C^c)
  =(A\cap C\cap (x',1_\pspace])\cup (A\cap C^c\cap (x',1_\pspace])
  =A\cap C^c
\end{equation*}
as $C^c\cap (x',1_\pspace]=C^c$ and $A\cap C\cap
(x',1_\pspace]=\emptyset$ by assumption. So, by the monotonicity of
$\ldf$, Eq.~\eqref{eq:unex-0-1-udf:max} follows.

In case $A\cap C\not\preceq x$ for all
$x\in C$, it follows that
\begin{equation*}
  \unepbox(A)
  =1
  =1-\ldf(y)
\end{equation*}
for all $y$ in $C$---indeed, because $A\cap (x,1_\pspace]\cap
C\neq\emptyset$ for every $x\in C$, it holds that $y\prec A\cap(x,1_\pspace]$
implies $y\in C$, and hence $\ldf(y)=0$.
Again, Eq.~\eqref{eq:unex-0-1-udf:max} follows.
\end{proof}

A few common important special cases are summarized in the following corollary:

\begin{corollary}\label{cor:unex-0-1-udf:order:complete}
  Let $(\ldf,\udf)$ be a p-box with $0$--$1$-valued $\udf$, and let
  $C=\{x \in \pspace^*\colon \udf(x)=0\}$. If
  $\pspace/\simeq$ is order complete,
  and $C$ has a maximum,
  then, for any $A \subseteq \pspace$,
\begin{equation}\label{eq:unex-0-1-udf-order-complete-maximum}
  \unepbox(A)=
  \begin{cases}
   1-\ldf(\inf A\cap C^c) &\text{if } A\cap C^c \text{ has no minimum}\\
   1-\ldf((\min A\cap C^c)-) &\text{if } A\cap C^c \text{ has a  minimum}.
  \end{cases}
\end{equation}
  If, in addition, $C=\{0_\pspace-\}$ (this occurs exactly when
  $\udf$ is vacuous, i.e. $\udf=1$), then
  \begin{equation}\label{eq:unex-0-1-udf-order-complete-vacuous}
    \unepbox(A)=
  \begin{cases}
    1-\ldf(\inf A)&\text{if } A \text{ has no minimum} \\
    1-\ldf(\min A-)&\text{if } A \text{ has a minimum}.
  \end{cases}
  \end{equation}
\end{corollary}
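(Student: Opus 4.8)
The plan is to specialize Proposition~\ref{prob:unex-0-1-udf} to the order-complete case, so the real work is replacing the quantity
\[
  \sup_{y \in \pspace^*\colon y\prec A\cap(x,1_\pspace]}\ldf(y)
\]
by a value of $\ldf$ at a single point. First I would dispose of the trivial quantifier issues: if $A\cap(x,1_\pspace]=\emptyset$ then $y\prec\emptyset$ holds for every $y$, so the supremum is $\ldf$ evaluated at the top, but since $1_\pspace\in C^c$ and $\udf$ is $0$--$1$-valued we get $\ldf(1_\pspace)=1$... actually here I must be a little careful, because the ``otherwise'' branch of Eq.~\eqref{eq:unex-0-1-udf:cases} already handled the case $A\cap C\not\preceq x$ for all $x\in C$; under the stated hypothesis $C$ has a maximum $\max C$, and $A\cap C\preceq\max C$ trivially, so we are always in the first branch and Eq.~\eqref{eq:unex-0-1-udf:max} reduces to $\unepbox(A)=1-\sup\{\ldf(y)\colon y\prec A\cap C^c\}$ since taking $x=\max C$ already gives the largest set $(x,1_\pspace]\supseteq A\cap C^c$ (monotonicity of the $\sup$ in $x$, which was essentially argued in the proof of Eq.~\eqref{eq:unex-0-1-udf:max}).

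Next I would compute $\sup\{\ldf(y)\colon y\in\pspace^*,\ y\prec A\cap C^c\}$ under order completeness of $\pspace/\simeq$. Write $S=A\cap C^c$. If $S=\emptyset$ then $A\subseteq C$, so $\sup A$ exists and lies in $C$, hence $\udf(\sup A)=0$, forcing $\unepbox(A)=\udf(\ldots)$-type reasoning; but in the present corollary we are phrasing the answer in terms of $\inf S$, and when $S=\emptyset$ one should check the formula degenerates correctly (this is a minor edge case I would mention but not belabor). For $S\neq\emptyset$, the set $\{y\colon y\prec S\}$ is exactly $\{y\colon y\prec z\text{ for all }z\in S\}$, i.e. the set of strict lower bounds of $S$. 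By order completeness $\inf S$ exists in $\pspace/\simeq$. There are two cases. If $S$ has no minimum, then every $y\prec\inf S$ is a strict lower bound, and moreover $\inf S$ itself is a strict lower bound of $S$ (since no element of $S$ equals $\inf S$), so the strict lower bounds are exactly $[0_\pspace,\inf S]\cup\{0_\pspace-\}$ up to $\simeq$; by monotonicity and non-decreasingness of $\ldf$, $\sup_{y\prec S}\ldf(y)=\ldf(\inf S)$. If $S$ has a minimum $\min S$, then $\inf S=\min S\in S$, so $\min S$ is not a strict lower bound; the strict lower bounds are precisely the $y\prec\min S$, and the supremum of $\ldf$ over those is $\ldf((\min S)-)=\sup_{z\prec\min S}\ldf(z)$, using the notation $\ldf(\cdot-)$ introduced in the discussion around Eq.~\eqref{eq:nex-intervals}. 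Substituting $S=A\cap C^c$ yields Eq.~\eqref{eq:unex-0-1-udf-order-complete-maximum}.

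For Eq.~\eqref{eq:unex-0-1-udf-order-complete-vacuous} I would simply observe that $\udf$ vacuous means $\udf\equiv1$ on $\pspace$ (recall $\udf(1_\pspace)=1$ always and $\udf$ is non-decreasing, so $\udf\equiv1$ iff $\udf(0_\pspace)=1$), which is equivalent to $C=\{0_\pspace-\}$; then $C^c=\pspace$, so $A\cap C^c=A$, and specializing the previous display gives the claim. Finally, I should remark that $C=\{0_\pspace-\}$ indeed has a maximum (namely $0_\pspace-$) so the hypotheses of the preceding case are met, making this a genuine specialization. The main obstacle is the bookkeeping around strict versus non-strict bounds and the $\ldf(\cdot-)$ notation: one must be careful that the supremum is over $y$ \emph{strictly} below every element of $A\cap C^c$, which is why the minimum-of-$A\cap C^c$ case picks up the left limit rather than the value at the minimum. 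Everything else is routine monotonicity together with the already-proved Proposition~\ref{prob:unex-0-1-udf}.
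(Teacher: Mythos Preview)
Your proposal is correct and follows essentially the same route as the paper: the paper's entire proof reads ``Use Proposition~\ref{prob:unex-0-1-udf}, and note that $(\max C,1_\pspace]=C^c$,'' leaving the rest to the reader, and what you have written is exactly the case analysis (minimum versus no minimum of $A\cap C^c$, and the $\ldf(\cdot-)$ bookkeeping) that this one-liner implicitly requires.
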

\begin{proof}
  Use Proposition~\ref{prob:unex-0-1-udf}, and note that $(\max C,1_\pspace]=C^c$.
\end{proof}

Using Eq.~\eqref{eq:unex-0-1-udf:cases}, we can also show that $\unepbox$ is
maximum-preserving when $\udf$ is $0$--$1$-valued:

\begin{proposition}\label{prop:unex-0-1-udf-max}
Let $(\ldf,\udf)$ be a p-box where $\udf$ is $0$--$1$-valued. Then
$\unepbox$ is maximum-preserving.
\end{proposition}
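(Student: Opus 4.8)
The plan is to imitate, step for step, the proof of Proposition~\ref{prop:unex-0-1-ldf-max}, now using $C=\{x\in\pspace^*\colon\udf(x)=0\}$ in place of $B$ and the expression for $\unepbox$ furnished by Eq.~\eqref{eq:unex-0-1-udf:cases} in place of Eq.~\eqref{eq:unex-0-1-ldf:cases}. So, fix a finite collection $\mathcal{A}$ of subsets of $\pspace$; the goal is $\unepbox(\cup_{A\in\mathcal{A}}A)=\max_{A\in\mathcal{A}}\unepbox(A)$. The inequality ``$\ge$'' relative to the union never needs extra work: each $A\subseteq\cup_{A\in\mathcal{A}}A$, so monotonicity (coherence) of $\unepbox$ already gives $\max_{A\in\mathcal{A}}\unepbox(A)\le\unepbox(\cup_{A\in\mathcal{A}}A)$. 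Hence everything reduces to proving $\max_{A\in\mathcal{A}}\unepbox(A)\ge\unepbox(\cup_{A\in\mathcal{A}}A)$.

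First I would clear away the degenerate branch of Eq.~\eqref{eq:unex-0-1-udf:cases}. If some $A\in\mathcal{A}$ has $A\cap C\not\preceq x$ for every $x\in C$, then $\unepbox(A)=1$; since $A\cap C\subseteq(\cup_{A\in\mathcal{A}}A)\cap C$, the same fails-for-all-$x$ property is inherited by the union, so $\unepbox(\cup_{A\in\mathcal{A}}A)=1$ as well, and the claimed identity is trivial. Therefore I may assume that for each $A\in\mathcal{A}$ there is $x_A\in C$ with $A\cap C\preceq x_A$. Since $\mathcal{A}$ is finite and $\pspace^*$ is totally preordered, I may take $x$ to be a largest element of the finite set $\{x_A\colon A\in\mathcal{A}\}\subseteq C$, so $x\in C$ and $A\cap C\preceq x$ for every $A$, whence also $(\cup_{A\in\mathcal{A}}A)\cap C\preceq x$. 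Thus the first branch of Eq.~\eqref{eq:unex-0-1-udf:cases} applies simultaneously to every $A\in\mathcal{A}$ and to $\cup_{A\in\mathcal{A}}A$, giving $\unepbox(A)=1-\sup_{y\in\pspace^*\colon y\prec A\cap C^c}\ldf(y)$ for each $A$, and $\unepbox(\cup_{A\in\mathcal{A}}A)=1-\sup_{y\in\pspace^*\colon y\prec(\cup_{A\in\mathcal{A}}A)\cap C^c}\ldf(y)$.

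The heart of the argument is then the same combinatorial remark used implicitly in Proposition~\ref{prop:unex-0-1-ldf-max}: writing $S_A:=\{y\in\pspace^*\colon y\prec A\cap C^c\}$, each $S_A$ is a down-set of the total preorder $(\pspace^*,\preceq)$, and the down-sets of a total preorder form a chain under inclusion. Consequently the finite family $\{S_A\colon A\in\mathcal{A}\}$ has a smallest member $S_{A'}$ with $A'\in\mathcal{A}$, and $S_{A'}=\bigcap_{A\in\mathcal{A}}S_A$; moreover $\bigcap_{A\in\mathcal{A}}S_A=\{y\in\pspace^*\colon y\prec(\cup_{A\in\mathcal{A}}A)\cap C^c\}$, because $y\prec(\cup_{A\in\mathcal{A}}A)\cap C^c$ holds exactly when $y\prec A\cap C^c$ for every $A\in\mathcal{A}$. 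Since $A'\in\mathcal{A}$, the value $\unepbox(A')=1-\sup_{y\in S_{A'}}\ldf(y)$ occurs among the terms of $\max_{A\in\mathcal{A}}\unepbox(A)$, and by the identification of $S_{A'}$ it equals $\unepbox(\cup_{A\in\mathcal{A}}A)$; this yields the missing inequality $\max_{A\in\mathcal{A}}\unepbox(A)\ge\unepbox(\cup_{A\in\mathcal{A}}A)$ and finishes the proof.

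I expect the one genuinely non-routine point to be the justification that the $S_A$ (equivalently, their complements, the up-sets $\{y\in\pspace^*\colon z\preceq y\text{ for some }z\in A\cap C^c\}$) are linearly ordered by inclusion: if two down-sets of $(\pspace^*,\preceq)$ were incomparable, one could pick $a$ in the first but not the second and $b$ in the second but not the first, and totality of $\preceq$ together with the down-set property would immediately force a contradiction. This is precisely where working over a total preorder rather than an arbitrary poset is used; everything else is bookkeeping with Eq.~\eqref{eq:unex-0-1-udf:cases} and monotonicity, exactly dual to the already-proven case of $0$--$1$-valued $\ldf$.
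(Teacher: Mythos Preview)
Your proof is correct and follows essentially the same approach as the paper's own proof: handle the degenerate branch of Eq.~\eqref{eq:unex-0-1-udf:cases} first, then in the remaining case pick $x=\max_A x_A\in C$, use that the sets $S_A=\{y\in\pspace^*\colon y\prec A\cap C^c\}$ are nested so that some $S_{A'}$ equals their intersection $\{y\in\pspace^*\colon y\prec(\cup_A A)\cap C^c\}$, and conclude via monotonicity. Your explicit down-set argument for why the $S_A$ are linearly ordered by inclusion is a welcome clarification of a step the paper leaves implicit, but the overall route is the same dual of Proposition~\ref{prop:unex-0-1-ldf-max}.
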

\begin{proof}
Consider a finite collection $\mathcal{A}$ of subsets of $\pspace$.
If there are $A \in \mathcal{A}$ such that, for all $x\in C$, $A\cap C\not\preceq x$, then
$\unepbox(A)=1=\unepbox(\cup_{A \in \mathcal{A}} A)$
by Eq.~\eqref{eq:unex-0-1-udf:cases},
establishing the desired result for this case.

So, from now on, we may assume that, for every $A \in \mathcal{A}$, there is
an $x_A\in C$ such that $A\cap C\preceq x_A$.
With $x=\max_{A \in\mathcal{A}} x_A \in C$,
it holds that
$\cup_{A \in \mathcal{A}} A\cap C\preceq x$,
and so, by
Eq.~\eqref{eq:unex-0-1-udf:cases},
\begin{align*}
  \unepbox(A)
  &=1-\sup_{y \in \pspace^*\colon y\prec A \cap C^c} \ldf(y)
  \text{ for every } A \in \mathcal{A}, \text{ and }\\
  \unepbox(\cup_{A \in \mathcal{A}}A)
  &=1-\sup_{y \in \pspace^*\colon y\prec \cup_{A\in\mathcal{A}}A \cap C^c} \ldf(y).
\end{align*}

Now, because $\mathcal{A}$ is finite,
there is an $A'\in\mathcal{A}$ such
that
\begin{align*}
  \{y \in \pspace^*\colon y\prec A' \cap C^c\}
  &=
  \cap_{A \in\mathcal{A}} \{y \in \pspace^*\colon y\prec A \cap C^c\}
  \\
  \intertext{and because
    $y\prec \cup_{A\in\mathcal{A}} A \cap C^c$ if and only if $y\prec A\cap C^c$ for
    all $A\in\mathcal{A}$,}
  &=
  \{y \in \pspace^*\colon y\prec \cup_{A\in\mathcal{A}}A \cap C^c\}.
\end{align*}
Consequently,
\begin{multline*}
  \max_{A\in\mathcal{A}}\unepbox(A)
  =\max_{A\in\mathcal{A}}\left(1-\sup_{y \in \pspace^*\colon y\prec A \cap C^c}\ldf(y)\right)
  \\
  \geq 1-\sup_{y\in \pspace^*\colon y\prec A' \cap C^c}\ldf(y)
  =1-\sup_{y\in \pspace^*\colon y\prec \cup_{A\in\mathcal{A}}A \cap C^c}\ldf(y)
  =\unepbox(\cup_{A\in\mathcal{A}}A).
\end{multline*}
The converse inequality follows from the
coherence of $\unepbox$. Concluding,
\begin{equation*}
  \max_{A\in\mathcal{A}}\unepbox(A)=\unepbox\left(\cup_{A\in\mathcal{A}}A\right)
\end{equation*}
for any finite collection $\mathcal{A}$ of subsets of $\pspace$.
\end{proof}

\subsection{Summary of Necessary and Sufficient Conditions} Putting Propositions~\ref{prop:pbox-possibility-zero-one},~\ref{prop:unex-0-1-ldf-max} and~\ref{prop:unex-0-1-udf-max} together, we get the following conditions.

\begin{corollary}\label{cor:max-suf-nec}
Let $(\ldf,\udf)$ be a p-box. Then, $(\ldf,\udf)$ is maximum-preserving if and only if
$$\ldf \textrm{ is  $0$--$1$-valued} $$
or
$$\udf \textrm{ is $0$--$1$-valued}.$$
\end{corollary}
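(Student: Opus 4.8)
The plan is to assemble this corollary directly from the three results established in this section, with no new work. For the ``only if'' direction, I would assume that $(\ldf,\udf)$ is maximum-preserving, which here means that its natural extension $\unepbox$ is maximum preserving; Proposition~\ref{prop:pbox-possibility-zero-one} then says verbatim that at least one of $\ldf$ or $\udf$ is $0$--$1$-valued, which is exactly the claimed disjunction. For the ``if'' direction, I would split into the two cases of the disjunction: if $\ldf$ is $0$--$1$-valued, Proposition~\ref{prop:unex-0-1-ldf-max} gives that $\unepbox$ is maximum-preserving, and if $\udf$ is $0$--$1$-valued, Proposition~\ref{prop:unex-0-1-udf-max} gives the same conclusion. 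Hence in either case $(\ldf,\udf)$ is maximum-preserving, completing the equivalence.

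I do not anticipate any genuine obstacle: the corollary is a pure bookkeeping consequence of Propositions~\ref{prop:pbox-possibility-zero-one}, \ref{prop:unex-0-1-ldf-max}, and~\ref{prop:unex-0-1-udf-max}. The only point worth making explicit is the terminological one, namely that ``$(\ldf,\udf)$ is maximum-preserving'' is to be read as ``the natural extension $\unepbox$ of $\upbox$ is maximum preserving,'' so that the hypothesis of Proposition~\ref{prop:pbox-possibility-zero-one} and the conclusions of Propositions~\ref{prop:unex-0-1-ldf-max} and~\ref{prop:unex-0-1-udf-max} line up without any gap. With that identification in place, the three statements glue together immediately, and the proof is a single short paragraph.
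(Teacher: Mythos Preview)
Your proposal is correct and matches the paper exactly: the paper presents this corollary without a separate proof, simply noting that it follows by ``putting Propositions~\ref{prop:pbox-possibility-zero-one}, \ref{prop:unex-0-1-ldf-max} and~\ref{prop:unex-0-1-udf-max} together,'' which is precisely the argument you outline. Your explicit remark that ``$(\ldf,\udf)$ is maximum-preserving'' should be read as ``$\unepbox$ is maximum preserving'' is a helpful clarification, but otherwise there is nothing to add.
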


These simple conditions characterise maximum-preserving p-boxes and bring us closer to p-boxes that are possibility measures, and that we will now study.

\section{P-Boxes as Possibility Measures.}
\label{sec:pbox-to-poss}

In this section, we identify when p-boxes coincide exactly with a
possibility measure. By Corollary~\ref{cor:max-suf-nec}, when $\pspace/\simeq$ is finite, $(\ldf,\udf)$ is a possibility
measure if and only if either $\ldf$ or $\udf$ is $0$--$1$-valued.
More generally, when $\pspace/\simeq$ is not finite,
we will rely on the following trivial, yet important, lemma:

\begin{lemma}\label{lem:pbox:is:a:possib}
  For a p-box $(\ldf,\udf)$ there is a
  possibility measure $\Pi$ such that $\unepbox=\Pi$ if and
  only if
  \begin{equation}\label{eq:charac-pbox-possib}
    \unepbox(A)=\sup_{x\in A}\unepbox(\{x\})\text{ for all }A\subseteq\pspace
  \end{equation}
  and in such a case, the possibility distribution $\pi$ associated with $\Pi$ is $\pi(x)=\unepbox(\{x\})$.
\end{lemma}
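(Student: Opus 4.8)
The plan is to prove both implications directly from the definitions, exploiting that a possibility measure is exactly a supremum-preserving upper probability and that the natural extension $\unepbox$ is coherent, hence monotone.

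\textbf{Necessity.} Suppose there is a possibility measure $\Pi$ with $\unepbox = \Pi$. Let $\pi$ be the possibility distribution inducing $\Pi$, so $\Pi(A) = \sup_{x \in A} \pi(x)$ for all $A \subseteq \pspace$. Applying this to singletons gives $\pi(x) = \Pi(\{x\}) = \unepbox(\{x\})$, which already identifies $\pi$ as claimed. Then for an arbitrary $A$, $\unepbox(A) = \Pi(A) = \sup_{x \in A}\pi(x) = \sup_{x \in A}\unepbox(\{x\})$, which is Eq.~\eqref{eq:charac-pbox-possib}.

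\textbf{Sufficiency.} Suppose Eq.~\eqref{eq:charac-pbox-possib} holds. Define $\pi(x) = \unepbox(\{x\})$ and let $\Pi$ be the induced possibility measure, $\Pi(A) = \sup_{x \in A}\pi(x)$. We must check that $\pi$ is a genuine (normed) possibility distribution, i.e.\ $\sup_{x\in\pspace}\pi(x) = 1$: this follows by taking $A = \pspace$ in Eq.~\eqref{eq:charac-pbox-possib}, since $\unepbox(\pspace) = 1$ by coherence (as $\upbox([0_\pspace,1_\pspace]) = \udf(1_\pspace) = 1$). Now Eq.~\eqref{eq:charac-pbox-possib} says precisely that $\unepbox(A) = \sup_{x\in A}\unepbox(\{x\}) = \sup_{x\in A}\pi(x) = \Pi(A)$ for every $A \subseteq \pspace$, so $\unepbox = \Pi$ and $\unepbox$ is a possibility measure with the stated distribution.

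\textbf{Main obstacle.} There is essentially no obstacle: the lemma is, as the authors say, trivial, and the only point requiring a moment's care is the normalization of $\pi$ in the sufficiency direction (ensuring $\sup_x \pi(x) = 1$ so that $\Pi$ is legitimately a possibility measure rather than merely a maxitive one), which is handled by evaluating the hypothesis at $A = \pspace$ together with $\unepbox(\pspace)=1$. One might also remark that Eq.~\eqref{eq:charac-pbox-possib} automatically forces $\unepbox(\emptyset) = 0$ (empty supremum), consistent with $\Pi(\emptyset)=0$, so no further consistency check is needed.
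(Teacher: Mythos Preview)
Your proof is correct and follows essentially the same direct argument as the paper; the only cosmetic differences are that the paper phrases the ``if'' direction via the conjugate lower probability $\lnepbox$ rather than $\unepbox$, and you add the explicit normalization check $\sup_x\pi(x)=1$, which the paper leaves implicit.
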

\begin{proof}
  ``if''. If $\unepbox(A)=\sup_{x\in A}\unepbox(\{x\})$ for all
  $A\subseteq\pspace$, then $\lnepbox=\lnepossib$ with the suggested
  choice of $\pi$, because, for all $A\subseteq\pspace$,
  \begin{equation*}
    \lnepbox(A)=1-\unepbox(A^c)=1-\sup_{x\in A^c}\unepbox(\{x\})=1-\sup_{x\in A^c}\pi(x)=1-\Pi(A^c)=\lnepossib(A).
  \end{equation*}

  ``only if''. If $\lnepbox=\lnepossib$, then, for all
  $A\subseteq\pspace$,
  \begin{equation*}
    \unepbox(A)
    =\Pi(A)
    =\sup_{x\in A}\pi(x)
    =\sup_{x\in A}\Pi(\{x\})
    =\sup_{x\in A}\unepbox(\{x\}).
  \end{equation*}
\end{proof}

We will say that \emph{a p-box $(\ldf,\udf)$ is a possibility
measure} whenever Eq.~\eqref{eq:charac-pbox-possib} is
satisfied.

Note that, due to Proposition~\ref{prop:pbox-possibility-zero-one}, for a p-box to be a possibility measure, at
least one of $\ldf$ or $\udf$ must be $0$--$1$-valued. Next, we give a
characterisation of p-boxes inducing a possibility measure in each
of these two cases.

\subsection{P-Boxes with  Zero-One-Valued Lower Cumulative Distribution Functions}

As mentioned, by Corollary~\ref{cor:max-suf-nec}, a p-box with $0$--$1$-valued $\ldf$ is maxitive, and its upper natural extension is given by Proposition~\ref{prob:unex-0-1-ldf}. Whence, we can easily determine when such p-box is a possibility measure:

\begin{proposition}\label{prop:pbox-iff-possibility-0-1-ldf}
  Assume that $\pspace/\simeq$ is order complete.
  Let $(\ldf,\udf)$ be a p-box with $0$--$1$-valued $\ldf$, and let
  $B=\{x\in\pspace^*\colon\ldf(x)=0\}$. Then, $(\ldf,\udf)$ is a possibility
  measure if and only if
  \begin{enumerate}[(i)]
  \item\label{prop:pbox-iff-possibility-0-1-ldf:xminus} $\udf(x)=\udf(x-)$ for all $x\in\pspace$ that have no immediate predecessor, and
  \item\label{prop:pbox-iff-possibility-0-1-ldf:minimum} $B^c$ has a minimum,
  \end{enumerate}
  and in such a case,
  \begin{equation}\label{eq:pbox-iff-possibility-0-1-ldf}
    \unepbox(A)=\sup_{x\in A\cap[0_\pspace,\min B^c]}\udf(x)
  \end{equation}
\end{proposition}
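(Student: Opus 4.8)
The plan is to use Lemma~\ref{lem:pbox:is:a:possib}: we must show that Eq.~\eqref{eq:charac-pbox-possib} holds if and only if conditions~\eqref{prop:pbox-iff-possibility-0-1-ldf:xminus} and~\eqref{prop:pbox-iff-possibility-0-1-ldf:minimum} hold, and, when they do, that $\unepbox$ is given by Eq.~\eqref{eq:pbox-iff-possibility-0-1-ldf}. Throughout I would exploit that $\ldf$ is $0$--$1$-valued and that $\pspace/\simeq$ is order complete, so the Corollary following Proposition~\ref{prob:unex-0-1-ldf} applies. First I would record the singleton value: by Eq.~\eqref{eq:unex-singleton}, $\unepbox(\{x\})=\udf(x)-\ldf(x-)$, which equals $\udf(x)$ when $x\in B$ (more precisely when $x-\in B$, i.e.\ $\ldf(x-)=0$) and equals $0$ when $\ldf(x-)=1$. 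So the candidate possibility distribution is $\pi(x)=\udf(x)$ for $x$ with $\ldf(x-)=0$, and $\pi(x)=0$ otherwise.

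For the ``if'' direction, assume~\eqref{prop:pbox-iff-possibility-0-1-ldf:xminus} and~\eqref{prop:pbox-iff-possibility-0-1-ldf:minimum}. Since $B^c$ has a minimum, Eq.~\eqref{eq:unex-0-1-ldf-order-complete-minimum} gives $\unepbox(A)=\udf(\sup(A\cap[0_\pspace,\min B^c]))$ for all $A$. Write $m=\min B^c$ and $A'=A\cap[0_\pspace,m]$. I must show $\udf(\sup A')=\sup_{x\in A'}\udf(x)$, i.e.\ that $\udf$ is ``sup-preserving along $A'$''. If $A'$ has a maximum this is immediate by monotonicity of $\udf$. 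If $A'$ has no maximum, let $s=\sup A'$; then $s$ has no immediate predecessor in the appropriate sense, and by~\eqref{prop:pbox-iff-possibility-0-1-ldf:xminus}, $\udf(s)=\udf(s-)=\sup_{z\prec s}\udf(z)=\sup_{x\in A'}\udf(x)$ (the last step because elements of $A'$ are cofinal below $s$). Also $\sup_{x\in A}\unepbox(\{x\})$: for $x\in A'$ with $x\preceq m$ we have $\ldf(x-)=0$ since $x-\prec m=\min B^c$ forces $x-\in B$, hence $\unepbox(\{x\})=\udf(x)$; and for $x\in A$ with $x\succ m$, either $\unepbox(\{x\})=0$ or, if positive, it equals $\udf(x)=1$, but then taking $x=1_\pspace\in A$ would... — more carefully, one checks $\sup_{x\in A}\unepbox(\{x\})=\sup_{x\in A'}\udf(x)$ by noting any $x\succ m$ contributing a positive singleton value has $\ldf(x-)<1$, impossible once $x\succ m$ unless the whole p-box is degenerate; I would handle this by observing $\sup_{x\in A}\unepbox(\{x\})\ge\sup_{x\in A'}\udf(x)=\unepbox(A)\ge\sup_{x\in A}\unepbox(\{x\})$, the last inequality by monotonicity, giving equality and also Eq.~\eqref{eq:pbox-iff-possibility-0-1-ldf}.

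For the ``only if'' direction, I would prove the contrapositive in two parts. If~\eqref{prop:pbox-iff-possibility-0-1-ldf:xminus} fails, pick $x$ with no immediate predecessor and $\udf(x-)<\udf(x)$; take $A=[0_\pspace,x)$. Then $\sup_{z\in A}\unepbox(\{z\})\le\udf(x-)$ (since each singleton value is at most $\udf(z)\le\udf(x-)$ for $z\prec x$), whereas $\unepbox(A)=\unepbox([0_\pspace,x))$ can be computed from Eq.~\eqref{eq:nex-intervals} (with left endpoint $0_\pspace-$) to be $\udf(x-)$ or $\udf$ evaluated higher — I would instead use $A=[0_\pspace,x]$ versus its points: $\unepbox([0_\pspace,x])=\udf(x)$ but $\sup_{z\le x}\unepbox(\{z\})$, if only the point $x$ can reach $\udf(x)$ and $\udf(x-)<\udf(x)$, shows the supremum over $A\setminus\{x\}$ is too small only if $x$ itself is not enough — here one must be slightly careful, so the cleanest choice is $A=[0_\pspace,x)$ and show $\unepbox(A)=\udf(x)>\udf(x-)\ge\sup_{z\in A}\unepbox(\{z\})$ using that $\pspace/\simeq$ order complete and $x$ has no immediate predecessor makes $\unepbox([0_\pspace,x))$ equal to the ``limit from the right'', which by the interval formulas is $\udf(x)$ when the left endpoint contributes nothing; I will verify this via Eq.~\eqref{eq:outer-measure} directly. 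If instead~\eqref{prop:pbox-iff-possibility-0-1-ldf:minimum} fails, i.e.\ $B^c$ has no minimum, let $A=B^c$ (equivalently $A=\{x:\ldf(x)=1\}$). Then $\unepbox(A)=1$ by Eq.~\eqref{eq:unex-0-1-ldf:cases} (there is no $y\in B^c$ with $y\prec A\cap B^c=B^c$, since $B^c$ has no minimum, so we fall in the ``otherwise'' case). But for each $x\in A$, $\ldf(x)=1$, and since $B^c$ has no minimum there is $x'\prec x$ with $x'\in B^c$, so $\ldf(x-)=1$, hence $\unepbox(\{x\})=\udf(x)-1=0$. Thus $\sup_{x\in A}\unepbox(\{x\})=0<1=\unepbox(A)$, violating Eq.~\eqref{eq:charac-pbox-possib}.

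The main obstacle I anticipate is the bookkeeping around $0_\pspace-$, immediate predecessors, and the precise form of $\unepbox$ on half-open initial intervals $[0_\pspace,x)$ when $x$ has no immediate predecessor; getting the ``only if'' counterexample for the failure of~\eqref{prop:pbox-iff-possibility-0-1-ldf:xminus} to land on the nose requires care that $\unepbox([0_\pspace,x))$ really equals $\udf(x)$ (not $\udf(x-)$) — this is exactly where order completeness and the no-immediate-predecessor hypothesis interact, and I would isolate it as a short preliminary computation via Eq.~\eqref{eq:outer-measure} before assembling the two directions.
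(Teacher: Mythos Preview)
Your approach is essentially the paper's: same counterexample $A=B^c$ for the failure of~\eqref{prop:pbox-iff-possibility-0-1-ldf:minimum}, same use of $[0_\pspace,x)$ for~\eqref{prop:pbox-iff-possibility-0-1-ldf:xminus}, and the same ``if'' argument via Eq.~\eqref{eq:unex-0-1-ldf-order-complete-minimum} followed by a case split on whether $A'$ has a maximum. Two small points. First, the obstacle you anticipate is not one: Eq.~\eqref{eq:nex-intervals} (the $[x,y)$ case with no immediate predecessor) gives $\unepbox([0_\pspace,x))=\udf(x)-\ldf(0_\pspace-)=\udf(x)$ directly, so there is no need to fall back on Eq.~\eqref{eq:outer-measure}; the paper uses exactly this. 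Second, in your ``if'' direction you assert that when $A'$ has no maximum its supremum $s$ ``has no immediate predecessor in the appropriate sense'' --- this is true but needs the one-line argument the paper supplies: if $s$ had an immediate predecessor $s-$, then every element of $A'$ is $\preceq s-$ (since none equals $s$), contradicting that $s$ is a \emph{least} upper bound.
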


Note that, in case $1_\pspace$ is a minimum of $B^c$, condition~\eqref{prop:pbox-iff-possibility-0-1-ldf:xminus} is essentially due to
\cite[Observation~9]{cooman1998}. Also note that,
for $\unepbox$ to be a possibility measure, both conditions are still necessary
even when $\pspace/\simeq$ is not order complete: the proof in this direction does not require order completeness.

As a special case, we mention that $\unepbox$ is a
possibility measure with possibility distribution
\begin{equation*}
  \pi(x)=
  \begin{cases}
    \udf(x) & \text{if }x\preceq\min B^c \\
    0 & \text{otherwise}.
  \end{cases}
\end{equation*}
whenever $\pspace/\simeq$ is finite.

\begin{proof}
  ``only if''. Assume that $(\ldf,\udf)$ is a possibility measure.
  For every $x\in\pspace$ that has no immediate predecessor,
  \begin{align*}
    \udf(x-)
    &=\sup_{x'\prec x}\udf(x')
    \\
    \intertext{and
    because $\unepbox(\{x'\})=\udf(x')-\ldf(x'-)$ (see Eq.~\eqref{eq:unex-singleton}),
    }
    &\ge\sup_{x'\prec x}\unepbox(\{x'\})
    \\
    \intertext{and because $(\ldf,\udf)$ is a possibility measure, by Lemma~\ref{lem:pbox:is:a:possib},}
    &=\unepbox([0_\pspace,x))=\udf(x)
  \end{align*}
  using that $x$ has no immediate predecessor and Eqs.~\eqref{eq:nex-intervals}. The converse inequality follows from the non-decreasingness of
  $\udf$. 

  Next, assume that, ex absurdo,
  $B^c=\{x\in\pspace^*\colon\ldf(x)=1\}$ has no minimum.
  This simply means that for every
  $x\in B^c$ there is an $x'\in B^c$ such that
  $x'\prec x$.
  So, in particular, $\ldf(x)=\ldf(x-)=1$
  for all $x$ in
  $B^c$, and hence,
  \begin{equation*}
    \unepbox(B^c)=\sup_{x\in B^c}\unepbox(\{x\})=\sup_{x\in B^c}(\udf(x)-\ldf(x-))=0.
  \end{equation*}
  Yet, also,
  \begin{equation*}
    \unepbox(B^c)=1
  \end{equation*}
  by Eq.~\eqref{eq:unex-0-1-ldf:cases}.
  We arrived at a contradiction.

  Finally, we show that
  Eq.~\eqref{eq:pbox-iff-possibility-0-1-ldf} holds. By
  Eq.~\eqref{eq:unex-0-1-ldf:min},
  \begin{equation*}
   \unepbox(A)=\min_{y\in B^c}\inf_{x \in \pspace^*\colon A \cap [0_\pspace,y] \preceq x}
   \udf(x)=\inf_{x \in \pspace^*\colon A \cap [0_\pspace,\min B^c] \preceq x}
   \udf(x)=\unepbox(A'),
  \end{equation*}
  with $A':=A\cap [0_\pspace,\min B^c]$. Since $\unepbox$ is a
  possibility measure, we conclude that
  \begin{equation*}
   \unepbox(A)=\unepbox(A')=\sup_{x \in A'} \unepbox(\{x\})=\sup_{x \in A'}
   \udf(x),
  \end{equation*}
  because  $\unepbox(\{x\})=\udf(x)-\ldf(x-)=\udf(x)$, since $\ldf(x-)=0$ for all $x \in [0,\min B^c]$. Hence, Eq.~\eqref{eq:pbox-iff-possibility-0-1-ldf}
  holds.

  ``if''. The claim is established if we can show that
  Eq.~\eqref{eq:pbox-iff-possibility-0-1-ldf} holds, because then
  \begin{equation*}
   \unepbox(A)=\sup_{x\in A\cap[0_\pspace,\min B^c]}\udf(x)=\sup_{x\in A\cap[0_\pspace,\min
   B^c]}\unepbox(\{x\}) \preceq \sup_{x \in A} \unepbox(\{x\}),
  \end{equation*}
  and the converse inequality follows from the monotonicity of
  $\unepbox$.

  Consider any
  event $A\subseteq\pspace$, and let $y$ be a supremum of
  $A'=A\cap[0_\pspace,\min B^c]$ (which exists because
  $\pspace/\simeq$ is order complete), so $\unepbox(A)=\udf(y)$ by
  Eq.~\eqref{eq:unex-0-1-ldf-order-complete-minimum}.
  If $y$ has an
  immediate predecessor, then $A'$ has a maximum (as we will show
  next), and
  \begin{equation*}
    \unepbox(A)=\udf(y)=\udf(\max A')=\max_{x\in A'}\udf(x)=\sup_{x\in A'}\udf(x).
  \end{equation*}
  If $y$ has no immediate predecessor, then either $A'$ has a maximum,
  and the above argument can be recycled,
  or $A'$ has no maximum, in
  which case
  \begin{equation*}
    \unepbox(A)=\udf(y)=\udf(y-)=\sup_{x\in A'}\udf(x).
  \end{equation*}
   The last equality holds because
  \begin{align*}
     \udf(y-)
     &=
     \sup_{x\prec \sup A'}\udf(x)
     \\
     \intertext{and, $A'$ has no maximum, so for every $x\prec \sup A'$, there is an $x'\in A'$ such that $x\prec x'\prec \sup A'$,
     whence}
     &=
     \sup_{x'\in A'}\udf(x').
  \end{align*}

  We are left to prove $A'$ has a
  maximum whenever $y$ has an immediate
  predecessor. Suppose that $A'$ has no maximum. Then it must hold that
  \begin{equation*}
    x\prec y\text{ for all }x\in A'
  \end{equation*}
  since otherwise $x\simeq y$ for some $x\in A'$, whereby $x$ would be
  a maximum of $A'$. 

  But, since $y$ has an immediate predecessor $y-$, the above equation
  implies that
  \begin{equation*}
    x\preceq y-\text{ for all }x\in A'.
  \end{equation*}
  Hence, $y-$ is an upper bound for $A'$, yet $y-\prec y$: this implies
  that $y$ is not a minimal upper bound for $A'$, or in other words,
  that $y$ is not a supremum of $A'$: we arrived at a contradiction.
  We conclude that $A'$ must have a maximum.
\end{proof}

\subsection{P-Boxes with  Zero-One-Valued Upper Cumulative Distribution Functions}

Similarly, we can also determine when
a p-box with  $0$--$1$-valued $\udf$ is a possibility measure:

\begin{proposition}\label{prop:pbox-iff-possibility-0-1-udf}
  Assume that $\pspace/\simeq$ is order complete.
  Let $(\ldf,\udf)$ be a p-box with $0$--$1$-valued $\udf$, and let
  $C=\{x\in\pspace^*\colon\udf(x)=0\}$. Then, $(\ldf,\udf)$ is a possibility
  measure if and only if
  \begin{enumerate}[(i)]
  \item\label{prop:pbox-iff-possibility-0-1-udf:xplus} $\ldf(x)=\ldf(x+)$ for all $x\in\pspace$ that have no
  immediate successor, and
  \item\label{prop:pbox-iff-possibility-0-1-udf:maximum} $C$ has a maximum,
  \end{enumerate}
  and in such a case,
  \begin{equation}\label{eq:pbox-iff-possibility-0-1-udf}
    \unepbox(A)=1-\inf_{y\in A\cap C^c}\ldf(y-).
  \end{equation}
\end{proposition}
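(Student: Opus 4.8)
The plan is to mirror the structure of the proof of Proposition~\ref{prop:pbox-iff-possibility-0-1-ldf}, exploiting the conjugacy between the $0$--$1$-valued-$\udf$ case and the $0$--$1$-valued-$\ldf$ case. Concretely, I will use Proposition~\ref{prob:unex-0-1-udf} (in particular Eq.~\eqref{eq:unex-0-1-udf:cases} and Eq.~\eqref{eq:unex-0-1-udf:max}) and Corollary~\ref{cor:unex-0-1-udf:order:complete} for the explicit form of $\unepbox$ in the order-complete setting, together with the singleton formula $\unepbox(\{x\})=\udf(x)-\ldf(x-)$ from Eq.~\eqref{eq:unex-singleton}, which here (for $x\in C^c$, i.e. $\udf(x)=1$) reads $\unepbox(\{x\})=1-\ldf(x-)$, and (for $x\in C$) equals $0$.

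For the ``only if'' direction, assume $(\ldf,\udf)$ is a possibility measure. To prove~\eqref{prop:pbox-iff-possibility-0-1-udf:xplus}: for $x\in\pspace$ with no immediate successor, I compute $\unepbox((x,1_\pspace])$ in two ways. On one hand, by Lemma~\ref{lem:pbox:is:a:possib}, it equals $\sup_{x'\succ x}\unepbox(\{x'\})=\sup_{x'\succ x}(\udf(x')-\ldf(x'-))$; on the other hand, by the interval formula Eq.~\eqref{eq:nex-intervals} for $(x,1_\pspace]$, since $x$ has no immediate successor this should give $1-\ldf(x+)$ up to the appropriate endpoint behaviour at $1_\pspace$ — I will check the exact variant of Eq.~\eqref{eq:nex-intervals} that applies and compare, the non-decreasingness of $\ldf$ giving the reverse inequality $\ldf(x)\le\ldf(x+)$ for free. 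To prove~\eqref{prop:pbox-iff-possibility-0-1-udf:maximum}: suppose ex absurdo $C$ has no maximum. Then every $x\in C$ has some $x'\in C$ with $x\prec x'$, so $\udf(x)=\udf(x+)=0$ and hence $\unepbox(\{x\})=\udf(x)-\ldf(x-)\le\udf(x)=0$ for all $x\in C$, giving $\unepbox(C)=\sup_{x\in C}\unepbox(\{x\})=0$; but Eq.~\eqref{eq:unex-0-1-udf:cases} (with $A=C$, noting $A\cap C^c=\emptyset$ so vacuously $A\cap C\preceq x$ fails for every $x$ only if... — I must be careful here: $C\cap C=C$ and $C\cap C^c=\emptyset$, so the ``otherwise'' branch is triggered precisely when $C\cap C\not\preceq x$ for all $x\in C$, which holds since $C$ has no maximum) yields $\unepbox(C)=1$, a contradiction. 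Finally, Eq.~\eqref{eq:pbox-iff-possibility-0-1-udf}: by Eq.~\eqref{eq:unex-0-1-udf:max} and the fact that $C$ has a maximum, $(\max C,1_\pspace]=C^c$, so $\unepbox(A)=1-\sup_{y\colon y\prec A\cap C^c}\ldf(y)$; then writing $A'=A\cap C^c$ and using that $\unepbox$ is a possibility measure, $\unepbox(A)=\unepbox(A')=\sup_{y\in A'}\unepbox(\{y\})=\sup_{y\in A'}(1-\ldf(y-))=1-\inf_{y\in A\cap C^c}\ldf(y-)$, since $\udf(y)=1$ on $C^c$.

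For the ``if'' direction, assume \eqref{prop:pbox-iff-possibility-0-1-udf:xplus} and \eqref{prop:pbox-iff-possibility-0-1-udf:maximum} and $\pspace/\simeq$ order complete. It suffices to establish Eq.~\eqref{eq:pbox-iff-possibility-0-1-udf}, since then $\unepbox(A)=1-\inf_{y\in A\cap C^c}\ldf(y-)=\sup_{y\in A\cap C^c}\unepbox(\{y\})\le\sup_{y\in A}\unepbox(\{y\})$, with the reverse inequality from monotonicity of $\unepbox$, giving exactly the characterisation Eq.~\eqref{eq:charac-pbox-possib}. To get Eq.~\eqref{eq:pbox-iff-possibility-0-1-udf}, I start from Corollary~\ref{cor:unex-0-1-udf:order:complete} (Eq.~\eqref{eq:unex-0-1-udf-order-complete-maximum}): $\unepbox(A)=1-\ldf(\inf A\cap C^c)$ if $A\cap C^c$ has no minimum, and $\unepbox(A)=1-\ldf((\min A\cap C^c)-)$ if it has a minimum. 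In the ``has a minimum'' case $\inf_{y\in A\cap C^c}\ldf(y-)=\ldf((\min A\cap C^c)-)$ by monotonicity of $\ldf(\cdot-)$, matching immediately. In the ``no minimum'' case, I must show $\inf_{y\in A\cap C^c}\ldf(y-)=\ldf(z)$ where $z=\inf A\cap C^c$. Monotonicity gives $\ge\ldf(z)$ since $z\preceq y-$ — wait, need $z\preceq y-$ for $y\in A\cap C^c$; this requires $z\prec y$, i.e. that $z$ is not in $A\cap C^c$, which is exactly the ``no minimum'' hypothesis (if $z\in A\cap C^c$ it would be the minimum). For the reverse: since $A\cap C^c$ has no minimum, for every $y\in A\cap C^c$ there is $y'\in A\cap C^c$ with $z\preceq y'\prec y$, so $\ldf(y'-)\le\ldf(y-)$... and I also need to connect to $\ldf(z)$: here condition~\eqref{prop:pbox-iff-possibility-0-1-udf:xplus}, namely $\ldf(z)=\ldf(z+)$ when $z$ has no immediate successor, is what closes the gap, since $\inf_{y\succ z}\ldf(y-)\le\inf_{y\succ z}\ldf(y)=\ldf(z+)=\ldf(z)$; the case where $z$ has an immediate successor needs a separate short argument (then $\inf A\cap C^c$ being $z$ with no minimum forces the successor of $z$ to lie in $A\cap C^c$, and one checks directly). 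The main obstacle I anticipate is precisely this endpoint/immediate-successor bookkeeping in the ``no minimum'' case — matching the two expressions requires carefully distinguishing whether $\inf A\cap C^c$ has an immediate successor and whether that successor lies in $A\cap C^c$, exactly as the dual proof of Proposition~\ref{prop:pbox-iff-possibility-0-1-ldf} had to handle whether the supremum had an immediate predecessor lying in $A'$.
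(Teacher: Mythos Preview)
Your plan mirrors the paper's proof and the overall structure is sound. Three places need tightening.

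For part~(i) of the ``only if'' direction: the interval formula for $(x,1_\pspace]$ gives $\unepbox((x,1_\pspace])=1-\ldf(x)$ (directly from Eq.~\eqref{eq:basic-pbox}), not $1-\ldf(x+)$; the ``no immediate successor'' condition does not enter that formula. Your comparison still works once corrected: from $1-\ldf(x)=\sup_{x'\succ x}\unepbox(\{x'\})\le\sup_{x'\succ x}(1-\ldf(x'-))=1-\inf_{x'\succ x}\ldf(x'-)$ and the identity $\inf_{x'\succ x}\ldf(x'-)=\ldf(x+)$ (valid precisely because $x$ has no immediate successor) you obtain $\ldf(x+)\le\ldf(x)$, and non-decreasingness gives the converse. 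This is exactly the paper's argument.

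In the ``if'' direction, no-minimum case: your bound $\inf_{y\in A'}\ldf(y-)\ge\ldf(z)$ is fine, and your chain $\inf_{y\succ z}\ldf(y-)\le\ldf(z+)=\ldf(z)$ is also correct, but the latter does not by itself yield $\inf_{y\in A'}\ldf(y-)\le\ldf(z)$: since $A'\subseteq\{y\colon y\succ z\}$, the inclusion pushes the infimum the wrong way. The missing bridge (which the paper supplies) is that $z=\inf A'$ forces, for every $w\succ z$, some $y''\in A'$ with $y''\preceq w$ (otherwise $w$ would be a lower bound strictly above the infimum); then $\inf_{y\in A'}\ldf(y-)\le\ldf(y''-)\le\ldf(w-)$, whence $\inf_{y\in A'}\ldf(y-)=\inf_{w\succ z}\ldf(w-)$.

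Finally, your treatment of the sub-case ``$z=\inf A'$ has an immediate successor while $A'$ has no minimum'' is backwards: this sub-case is \emph{impossible}, not a case where the successor lies in $A'$. If $s$ is the immediate successor of $z$, every $y\in A'$ satisfies $y\succ z$ and hence $y\succeq s$; if $s\in A'$ then $s=\min A'$, contradicting the hypothesis, while if $s\notin A'$ then $s$ is a lower bound for $A'$ with $s\succ z$, contradicting $z=\inf A'$. This is precisely the paper's closing argument that $A'$ must have a minimum whenever $\inf A'$ has an immediate successor.
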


Again, for $\unepbox$ to be a possibility measure, both conditions
are still necessary even when $\pspace/\simeq$ is not order
complete: the proof in this direction does not require order
completeness.

As a special case, we mention that $\unepbox$ is a
possibility measure with possibility distribution
\begin{equation*}
  \pi(x)=
  \begin{cases}
    1-\ldf(x-) & \text{if }x\in C^c \\
    0 & \text{otherwise},
  \end{cases}
\end{equation*}
whenever $\pspace/\simeq$ is finite.

\begin{proof}
  ``only if''. Assume that $(\ldf,\udf)$ is a possibility measure.
  For every $x\in\pspace$ that has no immediate successor,
  \begin{align*}
    \ldf(x+)
    &=\inf_{x'\succ x}\ldf(x')
    =\inf_{x'\succ x}\ldf(x'-)
    \\
    \intertext{where the latter equality holds because for every $x'\succ x$ there is an $x''$ such that $x'\succ x''\succ x$; otherwise, $x$ would have an immediate successor.
    Now,
    because $\unepbox(\{x'\})=\udf(x')-\ldf(x'-)$ (see Eq.~\eqref{eq:unex-singleton}),
    $\ldf(x'-)\le 1-\unepbox(\{x'\})$, whence
    }
    &\le\inf_{x'\succ x}(1-\unepbox(\{x'\}))=1-\sup_{x'\succ x}\unepbox(\{x'\})
    \\
    \intertext{and because $(\ldf,\udf)$ is a possibility measure, by Lemma~\ref{lem:pbox:is:a:possib},}
    &=1-\unepbox((x,1_\pspace])=\ldf(x),
  \end{align*}
  where last equality follows from Eq.~\eqref{eq:basic-pbox}. The converse inequality follows from the non-decreasingness of $\ldf$.

  Next, assume that, ex absurdo,
  $C=\{x\in\pspace^*\colon\udf(x)=0\}$ has no maximum.
  Since $\udf(x)=\ldf(x-)=0$
  for all $x$ in
  $C$,
  \begin{equation*}
    \unepbox(C)=\sup_{x\in C}\unepbox(\{x\})=\sup_{x\in C}(\udf(x)-\ldf(x-))=0.
  \end{equation*}
  Yet, also,
  \begin{equation*}
    \unepbox(C)=1
  \end{equation*}
  by Eq.~\eqref{eq:unex-0-1-udf:cases}---indeed, the second case applies because there is no $x\in C$ such that $C\preceq x$, as $C$ has no maximum.
  We arrived at a contradiction.

  Finally, we show that
  Eq.~\eqref{eq:pbox-iff-possibility-0-1-udf} holds. By
  Eq.~\eqref{eq:unex-0-1-udf:max},
  \begin{equation*}
   \unepbox(A)=1-\max_{x\in C}\sup_{y \in \pspace^*\colon y\prec A \cap (x,1_\pspace]}
   \ldf(y)=1-\sup_{y \in \pspace^*\colon y\prec A \cap (\max C,1_\pspace]}
   \ldf(y)=\unepbox(A'),
  \end{equation*}
  with $A':=A\cap (\max C,1_\pspace]=A\cap C^c$. Since $\unepbox$ is a
  possibility measure, we conclude that
  \begin{equation*}
   \unepbox(A)=\unepbox(A')=\sup_{y \in A'} \unepbox(\{y\})=\sup_{y \in A'}
   (1-\ldf(y-))=1-\inf_{y \in A'} \ldf(y-),
  \end{equation*}
  because  $\unepbox(\{y\})=\udf(y)-\ldf(y-)=1-\ldf(y-)$, since $\udf(y)=1$ for all $y \in C^c$.
  Hence, Eq.~\eqref{eq:pbox-iff-possibility-0-1-udf} holds.

  ``if''. The claim is established if we can show that
  Eq.~\eqref{eq:pbox-iff-possibility-0-1-udf} holds, because then
  \begin{equation*}
   \unepbox(A)
   =1-\inf_{y\in A\cap C^c} \ldf(y-)
   =\sup_{y\in A\cap C^c} (1-\ldf(y-))
   \leq \sup_{y \in A} \unepbox(\{y\}),
  \end{equation*}
  and the converse inequality follows from the monotonicity of
  $\unepbox$.

  Consider any
  event $A\subseteq\pspace$, and let $x$ be an infimum of
  $A'=A\cap C^c$ (which exists because
  $\pspace/\simeq$ is order complete).
  If $x$ has an
  immediate successor, then $A'$ has a minimum (as we will show
  next), and
  by Eq.~\eqref{eq:unex-0-1-udf-order-complete-maximum},
  \begin{equation*}
    \unepbox(A)=1-\ldf(\min A'-)=1-\min_{y\in A'}\ldf(y-)=1-\inf_{y\in A'}\ldf(y-).
  \end{equation*}
  If $x$ has no immediate successor, then either $A'$ has a minimum,
  and the above argument can be recycled,
  or $A'$ has no minimum, in
  which case Eq.~\eqref{eq:unex-0-1-udf-order-complete-maximum} implies that
  \begin{equation*}
    \unepbox(A)=1-\ldf(x)=1-\ldf(x+)=1-\inf_{y\in A'}\ldf(y-).
  \end{equation*}
  Here the second equality follows from assumption (i) and the last equality holds because
  \begin{align*}
     \ldf(x+)
     &=
     \inf_{y\succ \inf A'}\ldf(y)
     \\
     \intertext{and, $A'$ has no minimum, so for every $y\succ \inf A'$, there is a $y'\in\pspace$ such that $y\succ y'\succ \inf A'$,
     whence}
     &=
     \inf_{y\succ \inf A'}\sup_{y\succ y'\succ \inf A}\ldf(y')
     =
     \inf_{y\succ \inf A'}\ldf(y-)
     \\
     \intertext{and, again, $A'$ has no minimum, so for every $y\succ \inf A'$, there is a $y''\in A'$ such that $y\succ y''\succ \inf A'$,
     whence}
     &=
     \inf_{y''\in A'}\ldf(y''-).
  \end{align*}

  We are left to prove $A'$ has a
  minimum whenever $x$ has an immediate
  successor. Suppose that $A'$ has no minimum. Then it must hold that
  \begin{equation*}
    y\succ x\text{ for all }y\in A'
  \end{equation*}
  since otherwise $y\simeq x$ for some $y\in A'$, whereby $y$ would be
  a minimum of $A'$. 

  But, since $x$ has an immediate successor $x+$, the above equation
  implies that
  \begin{equation*}
    y\succeq x+\text{ for all }y\in A'.
  \end{equation*}
  Hence, $x+$ is a lower bound for $A'$, yet $x+\succ x$: this implies
  that $x$ is not a maximal lower bound for $A'$, or in other words,
  that $x$ is not an infimum of $A'$: we arrived at a contradiction.
  We conclude that $A'$ must have a minimum.
\end{proof}

\subsection{Necessary and Sufficient Conditions}

Merging
Corollary~\ref{cor:max-suf-nec} with
Propositions~\ref{prop:pbox-iff-possibility-0-1-ldf}
and~\ref{prop:pbox-iff-possibility-0-1-udf} we obtain the following
necessary and sufficient conditions for a p-box to be a
possibility measure:

\begin{corollary}\label{cor:pboxposs-nec-suf}
 Assume that $\pspace/\simeq$ is order complete and let $(\ldf,\udf)$ be a p-box. Then $(\ldf,\udf)$ is a possibility
  measure if and only if either
    \begin{enumerate}
  \item[(L1)] $\ldf$ is $0$--$1$-valued,
  \item[(L2)] $\udf(x)=\udf(x-)$ for all $x\in\pspace$ that have no immediate predecessor, and
  \item[(L3)] $\{x\in\pspace^*\colon\ldf(x)=1\}$ has a minimum,
  \end{enumerate}
  or
  \begin{enumerate}
  \item[(U1)] $\udf$ is $0$--$1$-valued,
  \item[(U2)] $\ldf(x)=\ldf(x+)$ for all $x\in\pspace$ that have no
  immediate successor, and
  \item[(U3)] $\{x\in\pspace^*\colon\udf(x)=0\}$ has a maximum.
  \end{enumerate}
\end{corollary}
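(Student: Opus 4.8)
The plan is to assemble this corollary directly from the three results already proved, so the argument is essentially bookkeeping: matching the labels (L1)--(L3) and (U1)--(U3) to the hypotheses of Propositions~\ref{prop:pbox-iff-possibility-0-1-ldf} and~\ref{prop:pbox-iff-possibility-0-1-udf}. The only translation needed is the observation that when $\ldf$ is $0$--$1$-valued, the set $B=\{x\in\pspace^*\colon\ldf(x)=0\}$ satisfies $B^c=\{x\in\pspace^*\colon\ldf(x)=1\}$, and similarly when $\udf$ is $0$--$1$-valued, $C=\{x\in\pspace^*\colon\udf(x)=0\}$ is exactly the set appearing in (U3).

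For the ``if'' direction, I would suppose first that (L1)--(L3) hold. Then $\ldf$ is $0$--$1$-valued by (L1); condition~\eqref{prop:pbox-iff-possibility-0-1-ldf:xminus} of Proposition~\ref{prop:pbox-iff-possibility-0-1-ldf} is literally (L2); and condition~\eqref{prop:pbox-iff-possibility-0-1-ldf:minimum} (that $B^c$ has a minimum) is (L3) by the identification above. Since $\pspace/\simeq$ is assumed order complete, Proposition~\ref{prop:pbox-iff-possibility-0-1-ldf} applies and gives that $(\ldf,\udf)$ is a possibility measure. The case where (U1)--(U3) hold is handled symmetrically, invoking Proposition~\ref{prop:pbox-iff-possibility-0-1-udf} instead (with conditions~\eqref{prop:pbox-iff-possibility-0-1-udf:xplus} and~\eqref{prop:pbox-iff-possibility-0-1-udf:maximum} matching (U2) and (U3)).

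For the ``only if'' direction, suppose $(\ldf,\udf)$ is a possibility measure; then $\unepbox$ is maximum-preserving, since every possibility measure is maxitive. By Corollary~\ref{cor:max-suf-nec} (equivalently, directly by Proposition~\ref{prop:pbox-possibility-zero-one}), at least one of $\ldf$ or $\udf$ is $0$--$1$-valued. If $\ldf$ is $0$--$1$-valued, then (L1) holds, and the ``only if'' half of Proposition~\ref{prop:pbox-iff-possibility-0-1-ldf} yields its conditions~\eqref{prop:pbox-iff-possibility-0-1-ldf:xminus} and~\eqref{prop:pbox-iff-possibility-0-1-ldf:minimum}, i.e.\ (L2) and (L3); so the first block holds. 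If instead $\udf$ is $0$--$1$-valued, the symmetric argument with Proposition~\ref{prop:pbox-iff-possibility-0-1-udf} delivers (U1)--(U3). (If both cumulative distribution functions are $0$--$1$-valued, either block may be derived, and either one suffices for the conclusion.)

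The only point requiring any care --- and the ``main obstacle'', such as it is --- is that the ``only if'' halves of Propositions~\ref{prop:pbox-iff-possibility-0-1-ldf} and~\ref{prop:pbox-iff-possibility-0-1-udf} are invoked here, so one should confirm their hypotheses are met: order completeness of $\pspace/\simeq$ is assumed in the statement of the corollary (and, as remarked immediately after those propositions, is not even needed for the ``only if'' directions), so no gap arises. No new computation is required; the proof is a two-line combination of the cited statements.
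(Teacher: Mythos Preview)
Your proposal is correct and follows exactly the approach the paper takes: the corollary is stated as a direct merger of Corollary~\ref{cor:max-suf-nec} with Propositions~\ref{prop:pbox-iff-possibility-0-1-ldf} and~\ref{prop:pbox-iff-possibility-0-1-udf}, and you have simply spelled out the matching of conditions (L1)--(L3) and (U1)--(U3) to the hypotheses of those propositions. Your identification of $B^c$ with $\{x\in\pspace^*\colon\ldf(x)=1\}$ and of $C$ with $\{x\in\pspace^*\colon\udf(x)=0\}$ is the only bookkeeping step required, and it is handled correctly.
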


This result settles the cases where p-boxes reduce to possibility
measures. We can now go the other way around, and characterise those
cases where possibility measures are p-boxes. Similarly to what
happens in the finite setting, we will see that almost all
possibility measures can be represented by a p-box.

\section{From Possibility Measures to P-Boxes}
\label{sec:poss-to-pbox}

In this section, we discuss and extend some previous results linking possibility distribution to p-boxes. We show that possibility measures correspond to specific kinds of p-boxes, and that some p-boxes correspond to the conjunction of two possibility distribution.

\subsection{Possibility Measures as Specific P-boxes}

\cite{2006:baudrit} already discuss the link between possibility measures and p-boxes defined on the real line with the usual ordering, and they show that any
possibility measure can be approximated by a p-box, however at the
expense of losing some information. We substantially strengthen their result,
and even reverse it: we prove that any possibility measure with compact range can be
\emph{exactly} represented by a p-box with vacuous lower cumulative
distribution function, that is, $\ldf=I_{[1_\pspace]_\simeq}$. In other
words, generally speaking, possibility measures are a special case of
p-boxes on totally preordered spaces.

\begin{theorem}\label{thm:pbox:representation:of:possib}
  For every possibility measure $\Pi$ on $\pspace$ with possibility distribution $\pi$ such that $\pi(\pspace)=\{\pi(x)\colon x\in\pspace\}$ is compact, there is a
  preorder $\preceq$ on $\pspace$ and an upper cumulative distribution
  function $\udf$ such that the p-box
  $(\ldf=I_{[1_\pspace]_\simeq},\udf)$ is a possibility measure with possibility distribution $\pi$, that is, such that for all
  events $A$:
  \begin{equation*}
    \unepbox(A)=\sup_{x\in A}\pi(x).
  \end{equation*}
  In fact, one may take the preorder $\preceq$ to be the one induced by
  $\pi$ (so $x\preceq y$ whenever $\pi(x)\le\pi(y)$) and $\udf=\pi$.
\end{theorem}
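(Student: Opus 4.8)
The plan is to verify that the p-box $(\ldf=I_{[1_\pspace]_\simeq},\udf=\pi)$, with $\preceq$ the preorder induced by $\pi$, satisfies the hypotheses that guarantee it is a possibility measure, and then to identify its natural extension with $\Pi$. First I would check that $\pi$ is indeed a cumulative distribution function for the induced preorder: by construction $x\preceq y$ iff $\pi(x)\le\pi(y)$, so $\pi$ is non-decreasing, and since $\pi$ is normed, $\sup_{x\in\pspace}\pi(x)=1$; to get $\pi(1_\pspace)=1$ I would invoke compactness of $\pi(\pspace)$, which forces the supremum $1$ to be attained, and then (after adjoining a top element if necessary, as the preliminaries allow) take $1_\pspace$ to be any element achieving $\pi(1_\pspace)=1$. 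Similarly, $\ldf=I_{[1_\pspace]_\simeq}$ is trivially a $0$--$1$-valued cumulative distribution function with $\ldf(1_\pspace)=1$, and $\ldf\le\udf$ since $\udf=\pi\le 1$ and $\ldf$ is $0$ except on $[1_\pspace]_\simeq$ where $\udf=\pi=1$. So $(\ldf,\udf)$ is a legitimate p-box with $0$--$1$-valued $\ldf$.

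Next I would apply Proposition~\ref{prob:unex-0-1-ldf} (or, where order completeness is available, Corollary~\eqref{eq:unex-0-1-ldf-order-complete-vacuous}): here $B=\{x\in\pspace^*\colon\ldf(x)=0\}=\pspace^*\setminus[1_\pspace]_\simeq$, so $B^c=[1_\pspace]_\simeq$, and $\ldf$ is the vacuous lower cdf. Thus for any $A\subseteq\pspace$, Eq.~\eqref{eq:unex-0-1-ldf:cases} gives $\unepbox(A)=\inf_{x\in\pspace^*\colon A\preceq x}\udf(x)$ when $A\cap B^c=\emptyset$ (and $\unepbox(A)=1$ otherwise, which matches $\sup_{x\in A}\pi(x)=1$ since then $A$ meets $[1_\pspace]_\simeq$). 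The key computation is then to show $\inf_{x\colon A\preceq x}\pi(x)=\sup_{y\in A}\pi(y)$. The inequality $\ge$ is immediate from monotonicity of $\pi$ ($A\preceq x$ implies $\pi(y)\le\pi(x)$ for all $y\in A$). For $\le$, let $s=\sup_{y\in A}\pi(y)$; by compactness of $\pi(\pspace)$, $s\in\pi(\pspace)$, so there is $x^*\in\pspace$ with $\pi(x^*)=s$, and since $\pi(y)\le s=\pi(x^*)$ for all $y\in A$ we have $A\preceq x^*$, so the infimum is $\le\pi(x^*)=s$. This gives $\unepbox(A)=\sup_{y\in A}\pi(y)=\Pi(A)$ for all $A$, which is exactly the claim, and in particular $\unepbox(\{x\})=\pi(x)$ so the associated possibility distribution is $\pi$.

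I would expect the main obstacle to be the careful handling of the extremal elements $0_\pspace-$, $0_\pspace$, and $1_\pspace$ and the role of compactness. Compactness of $\pi(\pspace)$ is exactly what is needed twice: once to ensure $1\in\pi(\pspace)$ so that a genuine top element $1_\pspace$ with $\udf(1_\pspace)=1$ exists (so that $\udf$ is a bona fide cdf and $\unepbox$ is built from a coherent upper probability), and once in the final computation to turn the supremum over $A$ into an attained value realised by some $x^*$ with $A\preceq x^*$. Without compactness (e.g.\ $\pi(\pspace)=[0,1)\cup\{1\}$ at an isolated top, or $\pi$ with a supremum not attained), the identity $\inf_{x\colon A\preceq x}\pi(x)=\sup_{y\in A}\pi(y)$ can fail, and one would at best get an approximation; this is precisely why the hypothesis appears. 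The remaining verifications (that $(\ldf,\udf)$ is a valid p-box, that $B^c=[1_\pspace]_\simeq$, that the ``otherwise'' branch of Eq.~\eqref{eq:unex-0-1-ldf:cases} agrees with $\Pi$) are routine bookkeeping given the preliminaries and Proposition~\ref{prob:unex-0-1-ldf}.
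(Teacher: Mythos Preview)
Your proof is correct and follows essentially the same strategy as the paper's: take the preorder induced by $\pi$, use compactness of $\pi(\pspace)$ to ensure the relevant suprema are attained, and compute $\unepbox$ from the formulas for p-boxes with $0$--$1$-valued $\ldf$. The only difference is organizational. The paper first argues that compactness of $\pi(\pspace)$ makes $\pspace/\simeq$ order complete and then invokes the corollary Eq.~\eqref{eq:unex-0-1-ldf-order-complete-vacuous} to get $\unepbox(A)=\udf(\sup A)=\pi(\sup A)=\sup_{x\in A}\pi(x)$ in one stroke. You instead work directly from Eq.~\eqref{eq:unex-0-1-ldf:cases} and establish $\inf_{x\colon A\preceq x}\pi(x)=\sup_{y\in A}\pi(y)$ by producing an $x^*$ with $\pi(x^*)=\sup_{y\in A}\pi(y)$ from closedness of $\pi(\pspace)$. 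Your route is slightly more self-contained in that it bypasses the order-completeness corollary, while the paper's route makes the role of order completeness explicit; the underlying use of compactness is identical in both.
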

\begin{proof}
Let $\preceq$ be the preorder induced by $\pi$.
Order completeness of $\pspace/\simeq$ is satisfied because
$\pi(\pspace)$ is compact with respect to the usual topology on
$\reals$. Indeed, for any $A\subseteq\pspace$, the supremum and
infimum of $\pi$ over $A$ belong to $\pi(\pspace)$ by its compactness,
whence $\pi^{-1}(\inf_{x\in A}\pi(x))$ consists of the infima of $A$,
and $\pi^{-1}(\sup_{x\in A}\pi(x))$ consists of its suprema.

Consider the p-box
$(I_{[1_\pspace]_\simeq},\pi)$. Then, for any $A\subseteq\pspace$, we deduce
from Eq.~\eqref{eq:unex-0-1-ldf-order-complete-vacuous}
that
\begin{equation*}
  \unepbox(A)=\udf(\sup A)=\pi(\sup A)=\sup_{x\in A}\pi(x)
\end{equation*}
because $x\preceq y$ for all $x \in A$
if and only if $\pi(x)\le \pi(y)$ for all $x \in A$, by definition
of $\preceq$, and hence, a minimal upper bound, or supremum, $y$ for
$A$ must be one for which $\pi(y)=\sup_{x\in A}\pi(x)$ (and, again, such $y$
exists because $\pi(\pspace)$ is compact).
\end{proof}

The representing p-box is not necessarily unique:

\begin{example}
Let $\pspace=\{x_1,x_2\}$ and let $\Pi$ be the possibility measure
determined by the possibility distribution
\begin{align*}
\pi(x_1)&=0.5 & \pi(x_2)&=1.
\end{align*}
As proven in
Theorem~\ref{thm:pbox:representation:of:possib}, this possibility
measure can be obtained if we consider the order $x_1 \prec x_2$ and
the p-box $(\ldf_1,\udf_1)$ given by
\begin{align*}
  \ldf_1(x_1)&=0 & \ldf_1(x_2)&=1 \\
  \udf_1(x_1)&=0.5 & \udf_1(x_2)&=1.
\end{align*}
However, we also obtain it if
we consider the order $x_2 \prec x_1$ and the p-box
$(\ldf_2,\udf_2)$ given by
\begin{align*}
  \ldf_2(x_1)&=1 & \ldf_2(x_2)&=0.5 \\
  \udf_2(x_1)&=1 & \udf_2(x_2)&=1.
\end{align*}
The p-box $(\ldf_2,\udf_2)$ induces a possibility measure from
Corollary~\ref{cor:max-suf-nec}, also taking into account that
$\pspace$ is finite. Moreover, by Eq.~\eqref{eq:unex-singleton},
\begin{align*}
 \overline{E}_{\ldf_2,\udf_2}(x_2)
 &=\udf(x_2)-\ldf(x_2-)=1 \\
 \overline{E}_{\ldf_2,\udf_2}(x_1)
 &=\udf(x_1)-\ldf(x_1-)=0.5
\end{align*}
as with the given ordering, $x_2-=0_\pspace-$ and $x_1-=x_2$.
As a consequence, $\overline{E}_{\ldf_2,\udf_2}$ is a possibility
measure associated to the possibility distribution $\pi$.
\end{example}

There are possibility measures which cannot be
represented as p-boxes when $\pi(\pspace)$ is not compact:

\begin{example}
Let $\pspace=[0,1]$, and consider the possibility distribution given
by $\pi(x)=(1+2x)/8$ if $x<0.5$, $\pi(0.5)=0.4$ and $\pi(x)=x$ if
$x> 0.5$; note that $\pi(\pspace)=[0.125,0.25)\cup\{0.4\} \cup
(0.5,1]$ is not compact. The ordering induced by $\pi$ is the usual
ordering on $[0,1]$. Let $\Pi$ be the possibility measure induced by
$\pi$. We show that there is no p-box $(\ldf,\udf)$ on
$([0,1],\preceq)$, regardless of the ordering $\preceq$ on $[0,1]$,
such that $\unepbox=\Pi$.

By Corollary~\ref{cor:max-suf-nec}, if $\unepbox=\Pi$, then at least
one of $\ldf$ or $\udf$ is $0$-$1$--valued. Assume first that $\ldf$
is $0$-$1$--valued. By Eq.~\eqref{eq:unex-singleton},
$\unepbox(\{x\})=\udf(x)-\ldf(x-)=\pi(x)$. Because $\pi(x)>0$ for
all $x$, it must be that $\ldf(x-)=0$ for all $x$, so $\udf=\pi$.
Because $\udf$ is non-decreasing, $x\preceq y$ if and only if
$\udf(x)\le\udf(y)$; in other words, $\preceq$ can only be the usual
ordering on $[0,1]$ for $(\ldf,\udf)$ to be a p-box. Hence,
$\ldf=I_{\{1\}}$.

Now, with $A=[0,0.5)$, we deduce from
Proposition~\ref{prob:unex-0-1-ldf} that
\begin{equation*}
 \unepbox(A)=\inf_{A \preceq x} \udf(x)=0.4 > 0.25=\sup_{x \in A}\pi(x)=\Pi(A),
\end{equation*}
where the second equality follows because $B^c=\{1\}$. Hence, $\unepbox$ does
not coincide with $\Pi$.

Similarly, if $\udf$ would be $0$--$1$-valued, then we deduce from
Eq.~\eqref{eq:unex-singleton} that $\udf(x)=1$ for
every $x$, again because $\pi(x)>0$ for all $x$.
Therefore, $\ldf(x-)=1-\pi(x)$ for all $x$.
But, because $\ldf$ is non-decreasing, $\preceq$ can only be the inverse of the usual ordering on $[0,1]$ for $(\ldf,\udf)$ to be a p-box.

This deserves some explanation. We wish to show that $\ldf(x-)<\ldf(y-)$ implies $x\prec y$. Assume ex absurdo that $x\succ y$. But, then,
\begin{equation*}
  \ldf(x-)=\sup_{z\prec x}\ldf(z)\ge\sup_{z\prec y}\ldf(z)=\ldf(y-),
\end{equation*}
a contradiction. It also cannot hold that $x\simeq y$, because in
that case $z\prec x$ if and only if $z\prec y$, and whence it would
have to hold that $\ldf(x-)=\ldf(y-)$. Concluding, it must hold that
$x\prec y$ whenever $\ldf(x-)<\ldf(y-)$, or in other words, whenever
$x>y$. So, $\preceq$ can only be the inverse of the usual ordering
on $[0,1]$ and, in particular, $[0,1]/\simeq$ is order complete.

Now, for $(\ldf,\udf)$ to induce the possibility measure $\Pi$, we
know from Corollary~\ref{cor:pboxposs-nec-suf} that
$\ldf(x)=\ldf(x+)$ for every $x$ that has no immediate successor in
with respect to $\preceq$, that is, for every $x\prec 0$,
or equivalently, for every $x>0$.
Whence,
\begin{equation*}
  \ldf(x)=\ldf(x+)=\inf_{y \succ x} \ldf(y)=\inf_{y \succ x} \ldf(y-)=1-\sup_{y \succ x}\pi(y)
  =1-\sup_{y<x}\pi(y)
\end{equation*}
for all $x>0$.
This leads to a contradiction: by the definition of $\pi$, we have on the one hand,
\begin{equation*}
  \ldf(0.5-)
  =\sup_{x\prec 0.5} \ldf(x)
  =\sup_{x>0.5} \ldf(x)
  =\sup_{x>0.5} (1-\sup_{y<x}\pi(y))
  =0.5
\end{equation*}
and on the other hand,
\begin{equation*}
  \ldf(0.5-)=1-\pi(0.5)=0.6.
\end{equation*}

Concluding, $\unepbox$ coincides with $\Pi$ in neither case.
\end{example}

Another way of relating possibility measures and p-boxes goes via
random sets (see for instance
\cite{KrieglerHeld2005} and \cite{desterckedubois2008}).
Possibility measures on ordered spaces can also be obtained via
upper probabilities of random sets (see for instance
\cite[Sections~7.5--7.7]{cooman1998} and \cite{miranda2005}).

\subsection{P-boxes as Conjunction of Possibility Measures}

In~\cite{desterckedubois2008}, where p-boxes are studied on finite
spaces, it is shown that a p-box can be interpreted as the
conjunction of two possibility measures, in the sense that
$\solp(\upr_{\ldf,\udf})$ is the intersection of two sets of
additive probabilities induced by two possibility measures. The next
proposition extends this result to arbitrary totally preordered spaces.

\begin{proposition}
Let $(\ldf,\udf)$ be a p-box such that $(\ldf_1=\ldf, \udf_1=I_{\pspace})$ and
$(\ldf_2=I_{[1_\pspace]_\simeq},\udf_2=\udf)$ are possibility measures. Then,
$(\ldf,\udf)$ is the intersection of two possibility measures
defined by the distributions
\begin{align*}
  \pi_1(x)&=1-\ldf(x-) &
  \pi_2(x)&=\udf(x)
\end{align*}
in the sense that $\solp(\upr_{\ldf,\udf})=\solp(\Pi_1) \cap \solp(\Pi_2)$.
\end{proposition}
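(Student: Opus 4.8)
The plan is to show the two set inclusions $\solp(\upr_{\ldf,\udf})\subseteq\solp(\Pi_1)\cap\solp(\Pi_2)$ and the reverse. For the forward inclusion, recall that $\solp(\upr_{\ldf,\udf})$ consists of all finitely additive $\pr$ with $\pr([0_\pspace,x])\le\udf(x)$ and $\pr((y,1_\pspace])\le 1-\ldf(y)$ for all $x,y$. I would observe that $\Pi_2$ is the natural extension of the p-box $(\ldf_2=I_{[1_\pspace]_\simeq},\udf_2=\udf)$, which has the \emph{same} upper cumulative distribution function $\udf$ and a weaker (vacuous) lower one, so $\upr_{\ldf,\udf}$ dominates $\upr_{\ldf_2,\udf_2}$ pointwise on their common domain, hence $\solp(\upr_{\ldf,\udf})\subseteq\solp(\upr_{\ldf_2,\udf_2})=\solp(\Pi_2)$; symmetrically $\upr_{\ldf,\udf}$ dominates $\upr_{\ldf_1,\udf_1}$ with $\ldf_1=\ldf$, $\udf_1=I_\pspace=1$, giving $\solp(\upr_{\ldf,\udf})\subseteq\solp(\Pi_1)$. (One should double-check that $I_\pspace$ here denotes the function constantly equal to $1$, i.e.\ the vacuous upper cdf, so that $(\ldf,I_\pspace)$ is indeed a legitimate p-box dominated by the original, and that the hypothesis ``$(\ldf_1,\udf_1)$ and $(\ldf_2,\udf_2)$ are possibility measures'' guarantees via Lemma~\ref{lem:pbox:is:a:possib} that $\unex_{\ldf_1,\udf_1}=\Pi_1$ and $\unex_{\ldf_2,\udf_2}=\Pi_2$ with the stated distributions $\pi_1,\pi_2$.)

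For the reverse inclusion, take $\pr\in\solp(\Pi_1)\cap\solp(\Pi_2)$. Since $\pr\in\solp(\Pi_2)=\solp(\upr_{\ldf_2,\udf_2})$ and $\upr_{\ldf_2,\udf_2}([0_\pspace,x])=\udf(x)$, I get $\pr([0_\pspace,x])\le\udf(x)$ for all $x$. Since $\pr\in\solp(\Pi_1)=\solp(\upr_{\ldf_1,\udf_1})$ and $\upr_{\ldf_1,\udf_1}((y,1_\pspace])=1-\ldf_1(y)=1-\ldf(y)$, I get $\pr((y,1_\pspace])\le 1-\ldf(y)$ for all $y$. These are exactly the two families of constraints defining $\solp(\upr_{\ldf,\udf})$, so $\pr\in\solp(\upr_{\ldf,\udf})$. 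This direction is essentially automatic once the domains of the relevant p-box upper probabilities are identified correctly.

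The remaining point to tie down is the bookkeeping connecting $\solp(\Pi_i)$ to $\solp(\upr_{\ldf_i,\udf_i})$. By definition $\solp(\Pi_i)$ is the set of additive probabilities dominated by $\Pi_i$ on \emph{all} events, whereas $\solp(\upr_{\ldf_i,\udf_i})$ is defined by domination only on the p-box domain $\set{[0_\pspace,x]}{x\in\pspace}\cup\set{(y,1_\pspace]}{y\in\pspace}$; but since $\Pi_i=\unex_{\ldf_i,\udf_i}$ is precisely the natural extension (upper envelope) of $\upr_{\ldf_i,\udf_i}$, the two sets of dominated additive probabilities coincide. I would state this equivalence explicitly, cite Lemma~\ref{lem:pbox:is:a:possib} together with the hypothesis to justify $\unex_{\ldf_i,\udf_i}=\Pi_i$, and then the chain of inclusions above closes.

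I expect the main (minor) obstacle to be purely notational: making sure that ``$I_\pspace$'' and ``$I_{[1_\pspace]_\simeq}$'' are read as the vacuous upper and lower cdf respectively, that $(\ldf,\udf)$ being dominated by $(\ldf,I_\pspace)$ and by $(I_{[1_\pspace]_\simeq},\udf)$ is interpreted correctly at the level of upper probabilities (the original p-box gives a \emph{larger} credal set is false — it gives a \emph{smaller} one, since more constraints), and that the hypothesis is genuinely needed only to identify $\Pi_1,\Pi_2$ with the natural extensions. No delicate order-completeness or limiting argument is required here; the content is really just ``intersecting the constraint families of the two one-sided p-boxes recovers the constraint family of the two-sided p-box.''
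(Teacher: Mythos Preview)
Your proposal is correct and follows essentially the same strategy as the paper: identify each $\Pi_i$ with the natural extension of the corresponding one-sided p-box (the paper invokes Propositions~\ref{prop:pbox-iff-possibility-0-1-ldf} and~\ref{prop:pbox-iff-possibility-0-1-udf} where you invoke Lemma~\ref{lem:pbox:is:a:possib} together with Eq.~\eqref{eq:unex-singleton}), use that natural extension preserves credal sets to get $\solp(\Pi_i)=\solp(\upr_{\ldf_i,\udf_i})$, and then observe that the constraint family defining $\solp(\upr_{\ldf,\udf})$ is the union of the constraint families for the two one-sided p-boxes. Your write-up is more explicit than the paper's (which dismisses the last step as ``almost trivial''), but the content is the same.
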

\begin{proof}
  Using Propositions~\ref{prop:pbox-iff-possibility-0-1-ldf}
  and~\ref{prop:pbox-iff-possibility-0-1-udf}, and the fact that, by
  construction, $[0_\pspace,\min B^c]=C^c=\pspace$, it follows readily
  that $\pi_1$ and $\pi_2$ are the possibility distributions
  corresponding to the p-boxes $(\ldf_1,\udf_1)$ and
  $(\ldf_2,\udf_2)$.

  Thus, by assumption, $\unex_{\ldf_1,\udf_1}=\Pi_1$ and
  $\unex_{\ldf_2,\udf_2}=\Pi_2$. Because natural extensions of two
  coherent lower previsions can only coincide when their credal sets
  are the same
  \cite[\S 3.6.1]{walley1991},
  it follows that
  \begin{align*}
    \solp(\upr_{\ldf_1,\udf_1})&=\solp(\Pi_1)
    &
    \solp(\upr_{\ldf_2,\udf_2})&=\solp(\Pi_2)
  \end{align*}
  We are left to prove that
  \begin{equation*}
    \solp(\upr_{\ldf,\udf})=\solp(\upr_{\ldf_1,\udf_1})\cap\solp(\upr_{\ldf_2,\udf_2})
  \end{equation*}
  but this follows almost trivially after writing down the constraints
  for each p-box.
\end{proof}

This suggests a simple way (already mentioned
in~\cite{desterckedubois2008}) to conservatively approximate
$\lnepbox$ by using the two
possibility distributions:
\begin{equation*}
  \max\{\lnex_{\pi_{\udf}}(A),\lnex_{\pi_{\ldf}}(A)\}
  \leq
  \lnepbox(A)
  \leq
  \unepbox(A)
  \leq
  \min\{\unex_{\pi_{\udf}}(A),\unex_{\pi_{\ldf}}(A)\}.
\end{equation*}
This
approximation is computationally attractive, as it allows us to
use the supremum preserving properties of possibility measures.
However, as next example shows, the approximation will usually be
very conservative, and hence not likely to be helpful.

\begin{example}
Consider $x\prec y \in \pspace$. The distance between
$\unepbox$ and its approximation
$\min\{\unex_{\pi_{\udf}},\unex_{\pi_{\ldf}}\}$ on the interval
$(x,y]$ is given by
\begin{align*}
&\min\{\unex_{\pi_{\udf}}((x,y]),\unex_{\pi_{\ldf}}((x,y])\}-\unex((x,y])
\\
&=\min\{\udf(y),1-\ldf(x)\}-(\udf(y)-\ldf(x))
\\
&=\min\{\ldf(x),1-\udf(y)\}.
\end{align*}
Therefore, the approximation will be close to the exact value on
this set 
only when either $\ldf(x)$ is close to zero or $\udf(y)$ is close to
one.
\end{example}

\section{Natural Extension of $0$--$1$-Valued P-Boxes}
\label{sec:specialcases}

From Proposition~\ref{prob:unex-0-1-ldf}, we can derive an
expression for the natural extension of a $0$--$1$-valued p-box
(see Figure~\ref{fig:01pbox}):

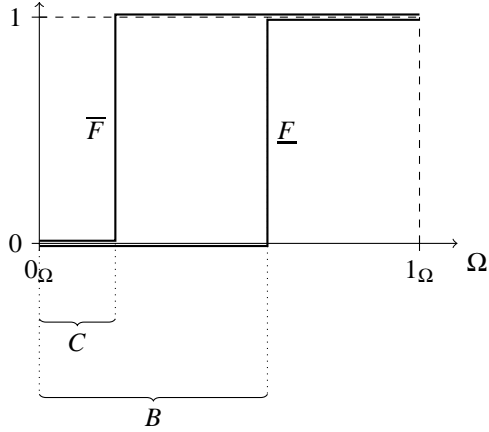
\begin{figure}
\begin{center}
\begin{tikzpicture}
\draw[->] (-0.1,0) node[left] {$0$} -- (5.5,0) node[below right] {$\pspace$};
\draw (5,0.1) -- (5,-0.1) node[below] {$1_\pspace$};
\draw[->] (0,-0.1) node[below] {$0_\pspace$} -- (0,3.2);
\draw (0.1,3) -- (-0.1,3) node[left] {$1$};
\draw[thick,yshift=-1pt] (0,0) -- (3,0) -- (3,3) node[midway,right] {$\ldf$} -- (5,3);
\draw[thick,yshift=+1pt] (0,0) -- (1,0) -- (1,3) node[midway,left] {$\udf$}  -- (5,3) ;
\draw[dashed] (5,0) -- (5,3);
\draw[dashed] (0,3) -- (5,3);
\draw[decorate,decoration={brace,mirror}] (0,-1) -- node[below,yshift=-2pt]{$C$} (1,-1);
\draw[decorate,decoration={brace,mirror}] (0,-2) -- node[below,yshift=-2pt]{$B$} (3,-2) {};
\draw[dotted] (0,0) -- (0,-2);
\draw[dotted] (1,0) -- (1,-1);
\draw[dotted] (3,3) -- (3,-2);
\end{tikzpicture}
\caption{A $0$--$1$-valued p-box.}
\label{fig:01pbox}
\end{center}
\end{figure}

\begin{proposition}\label{prob:unex-0-1-udf-ldf}
Let $(\ldf,\udf)$ be a p-box where $\udf=I_{C^c},\ldf=I_{B^c}$ for
some $C \subseteq B \subseteq \pspace$. Then for any $A \subseteq
\pspace$,
\begin{align}\label{eq:unex-0-1-udf-ldf}
  \unepbox(A)=
  \begin{cases}
    0
    & \text{if there are } x \in C\text{ and }y \in B^c \text{ such that }
    \\
    &A\cap C\preceq x\text{ and }A\cap B\cap C^c=\emptyset\text{ and }y\prec A\cap B^c
    \\
    1
    & \text{otherwise}.
  \end{cases}
\end{align}
\end{proposition}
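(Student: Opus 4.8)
The plan is to specialize Proposition~\ref{prob:unex-0-1-ldf} to this doubly $0$--$1$-valued situation. Here $\ldf=I_{B^c}$ means exactly that $\{x\in\pspace^*\colon\ldf(x)=0\}=B$, so the set called $B$ in Proposition~\ref{prob:unex-0-1-ldf} coincides with the $B$ given here; likewise $\udf=I_{C^c}$ means $\udf(x)=0$ on $C$ and $\udf(x)=1$ on $C^c$, and $C\subseteq B$ guarantees $\ldf\le\udf$ so that $(\ldf,\udf)$ is genuinely a p-box. First I would write down what Eq.~\eqref{eq:unex-0-1-ldf:cases} says in this case: for $A\subseteq\pspace$, $\unepbox(A)=1$ unless $y\prec A\cap B^c$ for some $y\in B^c$, in which case $\unepbox(A)=\inf_{x\in\pspace^*\colon A\cap B\preceq x}\udf(x)$. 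Since $\udf$ only takes the values $0$ and $1$, that infimum is $0$ precisely when there exists $x\in C$ (i.e.\ $\udf(x)=0$) with $A\cap B\preceq x$, and it is $1$ otherwise.

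So the task reduces to showing that ``$y\prec A\cap B^c$ for some $y\in B^c$, and $A\cap B\preceq x$ for some $x\in C$'' is equivalent to the stated condition ``there are $x\in C$ and $y\in B^c$ with $A\cap C\preceq x$ and $A\cap B\cap C^c=\emptyset$ and $y\prec A\cap B^c$.'' The second and third conjuncts of my reformulation already match one conjunct of the target, so the heart of the matter is to show that $A\cap B\preceq x$ for some $x\in C$ is equivalent to ($A\cap C\preceq x$ for some $x\in C$) together with $A\cap B\cap C^c=\emptyset$. For this I would decompose $A\cap B=(A\cap C)\cup(A\cap B\cap C^c)$, using $C\subseteq B$. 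If $A\cap B\preceq x$ for $x\in C$, then since $x\in C$ and every element of $A\cap B\cap C^c$ lies in $C^c$ while $C\prec C^c$ (because $\udf$ is non-decreasing and $0$ on $C$, $1$ on $C^c$, no element of $C$ can dominate an element of $C^c$, and conversely any element of $C^c$ strictly dominates any element of $C$), the set $A\cap B\cap C^c$ must be empty—otherwise some element of $C^c$ would be $\preceq x\in C$, impossible. Conversely, if $A\cap B\cap C^c=\emptyset$ then $A\cap B=A\cap C$, so $A\cap C\preceq x$ for some $x\in C$ is literally the same statement as $A\cap B\preceq x$.

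Putting these together: when the target condition holds, my reformulation's three conjuncts hold, giving $\unepbox(A)=0$; when it fails, I need to argue $\unepbox(A)=1$, splitting into the case where no $y\in B^c$ satisfies $y\prec A\cap B^c$ (then Eq.~\eqref{eq:unex-0-1-ldf:cases} directly gives $\unepbox(A)=1$) and the case where such $y$ exists but $A\cap B\preceq x$ fails for every $x\in C$ (then the infimum $\inf_{x\colon A\cap B\preceq x}\udf(x)$ is taken only over $x\in C^c$, where $\udf(x)=1$, so it equals $1$; one should also note this infimum is over a non-empty set since $1_\pspace$ always works, or invoke the convention that an empty infimum is $1$). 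I expect the only real subtlety—hardly an obstacle—is being careful about the ``$C\prec C^c$'' claim and the edge cases with $0_\pspace-$ and $1_\pspace$ in $\pspace^*$, and making sure the conventions about $\preceq$ applied to $\emptyset$ (namely $\emptyset\preceq x$ and $y\prec\emptyset$ for all $x,y$) are used consistently, exactly as set up in the discussion following Proposition~\ref{prob:unex-0-1-ldf}.
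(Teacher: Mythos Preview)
Your proposal is correct and follows essentially the same route as the paper: both specialize Eq.~\eqref{eq:unex-0-1-ldf:cases} from Proposition~\ref{prob:unex-0-1-ldf}, observe that the infimum of $\udf$ equals $0$ exactly when some $x\in C$ satisfies $A\cap B\preceq x$, and then decompose $A\cap B$ along $C$ and $C^c$ to rewrite that condition as $A\cap C\preceq x$ together with $A\cap B\cap C^c=\emptyset$. The paper phrases the decomposition via the set identities $[0_\pspace,x]\cap C=[0_\pspace,x]$ and $[0_\pspace,x]\cap C^c=\emptyset$ for $x\in C$, whereas you argue via $C\prec C^c$; these are the same observation, and your handling of the remaining ``otherwise'' case is also fine.
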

\begin{proof}
 From Proposition~\ref{prob:unex-0-1-ldf}, the natural extension of
 $(\ldf,\udf)$ is given by
 \begin{equation*}
 \unepbox(A)
  =
  \begin{cases}
    \inf_{x \in \pspace^*\colon A \cap B \preceq x} \udf(x)
    & \text{if }y\prec A\cap B^c\text{ for at least one }y\in B^c,
    \\
    1
    & \text{otherwise}.
  \end{cases}
  \end{equation*}
  Now, if $\udf=I_{C^c}$, the infimum in the first equation is equal to $0$ if and only if there is
  some $x \in C$ such that $A \cap B \preceq x$, and equal to $1$
  otherwise.
  Finally, note that $A \cap B \preceq x$ if and only if
  \begin{align*}
    A\cap B\cap C&\subseteq [0_\pspace,x]\cap C\text{ and }
    \\
    A\cap B\cap C^c&\subseteq [0_\pspace,x]\cap C^c
  \end{align*}
  and observe that $B\cap C=C$, $[0_\pspace,x]\cap C=[0_\pspace,x]$,
  and $[0_\pspace,x]\cap C^c=\emptyset$, to arrive at
  the conditions in Eq.~\eqref{eq:unex-0-1-udf-ldf}.
\end{proof}

Moreover, Propositions~\ref{prop:pbox-iff-possibility-0-1-ldf}
and~\ref{prop:pbox-iff-possibility-0-1-udf} allow us to determine
when this p-box is a possibility measure:

\begin{proposition}\label{prop:pbox-iff-possibility-0-1-ldf-udf}
Assume that $\pspace/\simeq$ is order complete. Let $(\ldf,\udf)$ be
a p-box where $\udf=I_{C^c},\ldf=I_{B^c}$ for some $C \subseteq B
\subseteq \pspace$. Then $(\ldf,\udf)$ is a possibility measure
if and only if $C$ has a maximum and $B^c$ has a minimum.
In such a case,
\begin{equation*}
  \unepbox(A)=
  \begin{cases}
    1 & \text{ if }A\cap (\max C,\min B^c]\neq\emptyset \\
    0 & \text{ otherwise}
  \end{cases}
\end{equation*}
or, in other words, in such a case, $\unepbox$ is a possibility measure with
possibility distribution
\begin{equation*}
  \pi(x)=
  \begin{cases}
    1 & \text{ if }x\in (\max C,\min B^c] \\
    0 & \text{ otherwise}.
  \end{cases}
\end{equation*}
\end{proposition}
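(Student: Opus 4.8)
The plan is to read the characterisation straight off Propositions~\ref{prop:pbox-iff-possibility-0-1-ldf} and~\ref{prop:pbox-iff-possibility-0-1-udf}, both of which apply here since $\ldf=I_{B^c}$ and $\udf=I_{C^c}$ are simultaneously $0$--$1$-valued. I would first record two easy observations: because $\udf$ and $\ldf$ are non-decreasing, $C$ and $B$ are down-sets of $\pspace$; and if moreover $C$ has a maximum, then $C=[0_\pspace,\max C]$, $C^c=(\max C,1_\pspace]$, and---using $C\subseteq B$, $\max C\in B$, $\min B^c\in B^c$, and that $\ldf$ is non-decreasing and constant on $\simeq$-classes---$\max C\prec\min B^c$, so that $[0_\pspace,\min B^c]\cap C^c=(\max C,\min B^c]$.

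For the ``only if'' direction, assume $(\ldf,\udf)$ is a possibility measure. Applying Proposition~\ref{prop:pbox-iff-possibility-0-1-ldf} (with the $0$--$1$-valued function $\ldf$), its second condition immediately gives that $B^c$ has a minimum; applying Proposition~\ref{prop:pbox-iff-possibility-0-1-udf} (with the $0$--$1$-valued function $\udf$), its second condition gives that $\{x\in\pspace^*\colon\udf(x)=0\}=C\cup\{0_\pspace-\}$ has a maximum, i.e.\ that $C$ has a maximum (with the convention $\max C=0_\pspace-$ in the vacuous-$\udf$ case $C=\emptyset$).

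For the ``if'' direction I would fix $C$ with a maximum and $B^c$ with a minimum and verify the hypotheses of Proposition~\ref{prop:pbox-iff-possibility-0-1-ldf}. Its second condition, that $B^c$ has a minimum, holds by assumption. For its first condition, let $x\in\pspace$ have no immediate predecessor: if $x\in C$ then $\udf(x)=0$ and monotonicity forces $\udf(x-)=0$; if $x\in C^c$ then $x\succ\max C$ (as $\udf$ is constant on $\simeq$-classes and $\udf(x)=1\neq 0=\udf(\max C)$), and since $x$ has no immediate predecessor there is a $z$ with $\max C\prec z\prec x$, so $z\in C^c$, $\udf(z)=1$, hence $\udf(x-)\ge\udf(z)=1=\udf(x)$. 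Thus both conditions hold, and Proposition~\ref{prop:pbox-iff-possibility-0-1-ldf} yields simultaneously that $(\ldf,\udf)$ is a possibility measure and the formula $\unepbox(A)=\sup_{x\in A\cap[0_\pspace,\min B^c]}\udf(x)$ of Eq.~\eqref{eq:pbox-iff-possibility-0-1-ldf}.

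It then remains to simplify that supremum. Since $\udf=I_{C^c}$, it equals $1$ exactly when $A\cap[0_\pspace,\min B^c]\cap C^c\neq\emptyset$ and $0$ otherwise; and by the preliminary observations $[0_\pspace,\min B^c]\cap C^c=(\max C,\min B^c]$, so $\unepbox(A)=1$ iff $A\cap(\max C,\min B^c]\neq\emptyset$. Evaluating on singletons via Lemma~\ref{lem:pbox:is:a:possib} then gives the stated possibility distribution $\pi(x)=\unepbox(\{x\})=I_{(\max C,\min B^c]}(x)$. The only point requiring any care is the bookkeeping of the adjoined element $0_\pspace-$ when transferring ``$C$ has a maximum'' between the $\pspace$ and $\pspace^*$ conventions of the cited propositions; everything else is routine substitution, so I do not expect a real obstacle.
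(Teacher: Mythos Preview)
Your proof is correct and follows essentially the same strategy as the paper for the equivalence: both directions invoke Propositions~\ref{prop:pbox-iff-possibility-0-1-ldf} and~\ref{prop:pbox-iff-possibility-0-1-udf}, and the verification of condition~\eqref{prop:pbox-iff-possibility-0-1-ldf:xminus} in the ``if'' part is argued the same way (the paper phrases it as a case split on whether $C^c$ has a minimum, you phrase it by directly exhibiting a $z$ with $\max C\prec z\prec x$, but these are the same observation).

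The one genuine difference is in deriving the explicit formula for $\unepbox(A)$. The paper goes back to Proposition~\ref{prob:unex-0-1-udf-ldf} (Eq.~\eqref{eq:unex-0-1-udf-ldf}) and simplifies the three conditions there using $x=\max C$, $y=\min B^c$. You instead read the formula directly off Eq.~\eqref{eq:pbox-iff-possibility-0-1-ldf}, which Proposition~\ref{prop:pbox-iff-possibility-0-1-ldf} already guarantees once its hypotheses are checked, and then observe that $\sup_{x\in A\cap[0_\pspace,\min B^c]}I_{C^c}(x)$ is the indicator of $A\cap(\max C,\min B^c]\neq\emptyset$. Your route is a bit more economical since it avoids revisiting Proposition~\ref{prob:unex-0-1-udf-ldf}; the paper's route makes the connection to the general $0$--$1$ formula more visible. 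Both are perfectly valid.
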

\begin{proof}
  ``only if''. Immediate by
  Propositions~\ref{prop:pbox-iff-possibility-0-1-ldf}
  and~\ref{prop:pbox-iff-possibility-0-1-udf}.

  ``if''. By
  Proposition~\ref{prop:pbox-iff-possibility-0-1-ldf},
  $(\ldf,\udf)$ is a possibility measure if and
  only if $B^c$ has a minimum and $\udf(x)=\udf(x-)$ for every $x$
  with no immediate predecessor. The latter condition holds
  for every $x \in C$, and for every $x \in C^c$ with a
  predecessor in $C^c$.
  Whence, we only need to check whether $C^c$ has a
  minimum---if not, then every $x\in C^c$ has a predecessor in
  $C^c$---and if so, that this minimum has an immediate predecessor
  (because obviously $1=\udf(\min C)=\udf(\min C-)=0$ cannot hold).

  Indeed, because $C$ has a maximum, $C^c=(\max C,1_\pspace]$. So,
  either $C^c$ has a minimum, in which case $\max C$ must be
  the immediate predecessor of this minimum, or $C^c$ has no minimum.

  The expression for $\unepbox(A)$ follows from
  Eq.~\eqref{eq:unex-0-1-udf-ldf}. In that equality, without loss of
  generality, we can take $x=\max C$ and $y=\min B^c$, and
  $A\cap C\preceq \max C$ is obviously always satisfied, so
  \begin{equation*}
    \unepbox(A)
    =
    \begin{cases}
      0
      & \text{if } A\cap B\cap C^c=\emptyset\text{ and }\min B^c \prec A\cap B^c
      \\
      1
      & \text{otherwise}
    \end{cases}
  \end{equation*}
  So, to establish the desired equality, it suffices to show that
  $A\cap B\cap C^c=\emptyset$ and $\min B^c \prec A\cap B^c$ if and
  only if $A\cap(\max C,\min B^c]=\emptyset$.

  Indeed, $\min B^c\prec A\cap B^c$ precisely when $\min B^c\not\in A$.
  Moreover,
  \begin{equation*}
    B\cap C^c=[0_\pspace,\min B^c)\cap (\max C,1_\pspace]=(\max C,\min B^c).
  \end{equation*}
  So, the desired equivalence is established.
\end{proof}

In particular, we can characterise under which conditions a precise
p-box, i.e., one where $\ldf=\udf:=\df$, induces a possibility
measure. The natural extension of precise p-boxes on the unit
interval was considered in \cite[Section~3.1]{miranda2006c}. From
Proposition~\ref{prop:pbox-possibility-zero-one}, the natural
extension of $\df$ can only be a possibility measure when $\df$ is
0--1-valued. If we apply Proposition~\ref{prob:unex-0-1-udf-ldf}
with $B=C$ we obtain the following:

\begin{corollary}\label{co:nex-precise-01}
Let $(\ldf,\udf)$ be a precise p-box where $\udf=\ldf$ is
$0$--$1$-valued, and let $B=\{x\in\pspace^*\colon\ldf(x)=0\}$. Then,
for every subset $A$ of $\pspace$,
\begin{equation*}
 \unepbox(A)=\begin{cases}
  0 &\text{ if there are } x \in B, y \in B^c\text{ such that }A\cap B\preceq x\text{ and }y\prec A\cap B^c \\
  1 &\text{ otherwise}.
  \end{cases}
\end{equation*}
\end{corollary}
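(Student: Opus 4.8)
The plan is to obtain this as a direct specialisation of Proposition~\ref{prob:unex-0-1-udf-ldf} to the case in which the lower and upper $0$--$1$-valued distribution functions coincide.

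First I would record the shape of $\df$. Since $\df$ is a cumulative distribution function, it is non-decreasing with $\df(1_\pspace)=1$; being $0$--$1$-valued, it must then equal $I_{B^c}$, where $B=\{x\in\pspace^*\colon\df(x)=0\}$ is the set in the statement and $B^c=\{x\in\pspace^*\colon\df(x)=1\}$ is a non-empty up-set containing $1_\pspace$. Hence $\ldf=\udf=I_{B^c}$, which is exactly the form $\udf=I_{C^c}$, $\ldf=I_{B^c}$ required by Proposition~\ref{prob:unex-0-1-udf-ldf} with the choice $C=B$; the hypothesis $C\subseteq B\subseteq\pspace$ holds trivially, with equality. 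Note also that the set ``$C$'' of Proposition~\ref{prob:unex-0-1-udf-ldf}, defined via $\udf$, coincides with $B$ here, since $\ldf=\udf$.

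Next I would invoke Proposition~\ref{prob:unex-0-1-udf-ldf} with $C=B$ and simplify Eq.~\eqref{eq:unex-0-1-udf-ldf}. The only point to observe is that the middle clause ``$A\cap B\cap C^c=\emptyset$'' becomes vacuous: with $C=B$ we have $B\cap C^c=B\cap B^c=\emptyset$, so $A\cap B\cap C^c=\emptyset$ holds for every $A\subseteq\pspace$. The remaining conditions ``$A\cap C\preceq x$'' and ``$y\prec A\cap B^c$'' then read, after substituting $C=B$, ``$A\cap B\preceq x$'' and ``$y\prec A\cap B^c$'', which is precisely the condition appearing in the corollary; this yields the claimed case distinction for $\unepbox(A)$.

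I do not foresee a genuine obstacle: the whole content is bookkeeping, the only care needed being the identification of the two index sets $B$ and $C$, which is immediate. As a cross-check one may instead argue straight from Proposition~\ref{prob:unex-0-1-ldf}: there $\unepbox(A)=\inf_{x\in\pspace^*\colon A\cap B\preceq x}\udf(x)$ whenever $y\prec A\cap B^c$ for some $y\in B^c$, and $\unepbox(A)=1$ otherwise; since $\udf=I_{B^c}$, that infimum equals $0$ exactly when some $x\in B$ satisfies $A\cap B\preceq x$ and equals $1$ otherwise, reproducing the same two cases.
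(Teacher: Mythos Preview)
Your proposal is correct and follows exactly the paper's own route: the paper's proof is the single line ``Immediate from Proposition~\ref{prob:unex-0-1-udf-ldf},'' and you have simply spelled out that specialisation (taking $C=B$ and noting the middle clause $A\cap B\cap C^c=\emptyset$ becomes vacuous). The additional cross-check via Proposition~\ref{prob:unex-0-1-ldf} is a nice redundancy but not needed.
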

\begin{proof}
  Immediate from Proposition~\ref{prob:unex-0-1-udf-ldf}.
\end{proof}

Moreover, Proposition~\ref{prop:pbox-iff-possibility-0-1-ldf-udf}
allows us to determine when this p-box is a possibility measure:

\begin{corollary}\label{co:precise-01-possib}
Assume that $\pspace/\simeq$ is order complete. Let $(\ldf,\udf)$ be
a precise p-box where $\udf=\ldf$ is $0$--$1$-valued, and let
$B=\{x\in\pspace^*\colon\ldf(x)=0\}$. Then, $(\ldf,\udf)$ is a
possibility measure if and only if $B$ has a maximum and $B^c$ has a
minimum. In that case, for every $A\subseteq\pspace$,
\begin{equation*}
 \unepbox(A)=\begin{cases}
  0 &\text{ if } \min B^c \notin A \\
  1 &\text{ otherwise}.
  \end{cases}
\end{equation*}
\end{corollary}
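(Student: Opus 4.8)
The plan is to obtain this as the precise specialisation of Proposition~\ref{prop:pbox-iff-possibility-0-1-ldf-udf}. First I would note that any $0$--$1$-valued cumulative distribution function $\df$ has the form $\df=I_{B^c}$ with $B=\{x\in\pspace^*\colon\df(x)=0\}$: since $\df$ is non-decreasing with $\df(1_\pspace)=1$, the set $B$ is a down-set of $\pspace^*$ (and $B^c$ the complementary up-set, with $0_\pspace-\in B$ and $1_\pspace\in B^c$). Consequently a precise p-box with $\ldf=\udf=\df$ is exactly a p-box of the kind treated in Proposition~\ref{prob:unex-0-1-udf-ldf} and Proposition~\ref{prop:pbox-iff-possibility-0-1-ldf-udf}, namely $\udf=I_{C^c}$, $\ldf=I_{B^c}$, where the inclusion $C\subseteq B$ now holds with equality $C=B$.

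Next I would invoke Proposition~\ref{prop:pbox-iff-possibility-0-1-ldf-udf} with $C=B$. Under the standing assumption that $\pspace/\simeq$ is order complete, it states that $(\ldf,\udf)$ is a possibility measure if and only if $C=B$ has a maximum and $B^c$ has a minimum, which is precisely the asserted condition, and that in this case $\unepbox(A)=1$ when $A\cap(\max B,\min B^c]\neq\emptyset$ and $\unepbox(A)=0$ otherwise.

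The only thing left is to rewrite $(\max B,\min B^c]$ into the form stated in the corollary. Since $B$ is a down-set with maximum $\max B$, we have $B^c=(\max B,1_\pspace]$, so every element of $B^c$ is $\succeq\min B^c$; hence any $z$ with $\max B\prec z\preceq\min B^c$ lies in $B^c$ and satisfies $z\preceq\min B^c$, forcing $z\simeq\min B^c$, and the converse inclusion is clear, so $(\max B,\min B^c]=[\min B^c]_\simeq$. Because $\unepbox$ arises from $\upboxlattice$ through the outer-measure formula \eqref{eq:outer-measure}, whose covering sets in $\plattice$ are unions of $\simeq$-classes, $\unepbox(A)$ depends on $A$ only through the $\simeq$-classes it meets; so $A\cap(\max B,\min B^c]\neq\emptyset$ is the condition ``$\min B^c\in A$'' (read modulo $\simeq$), giving the displayed two-case formula.

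I do not anticipate a real obstacle: this is bookkeeping on top of Proposition~\ref{prop:pbox-iff-possibility-0-1-ldf-udf}. If one prefers a self-contained check, one can instead combine Corollary~\ref{co:nex-precise-01} with Lemma~\ref{lem:pbox:is:a:possib}: a short computation from Corollary~\ref{co:nex-precise-01} gives $\unepbox(\{x\})=1$ exactly when $x$ is a minimum of $B^c$ and $0$ otherwise, so $\sup_{x\in A}\unepbox(\{x\})$ vanishes identically unless $B^c$ has a minimum, while evaluating $\unepbox$ on $B\cap\pspace$ forces $B$ to have a maximum as well; conversely, when both extrema exist, one more application of Corollary~\ref{co:nex-precise-01} yields $\unepbox(A)=\sup_{x\in A}\unepbox(\{x\})$ for all $A$, so Lemma~\ref{lem:pbox:is:a:possib} applies. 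In either route the one mildly delicate point is keeping the distinction between $\min B^c$ and its $\simeq$-class straight.
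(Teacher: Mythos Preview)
Your proposal is correct and follows essentially the same route as the paper: the paper's proof is the one-liner ``Immediate by Proposition~\ref{prop:pbox-iff-possibility-0-1-ldf-udf} and Corollary~\ref{co:nex-precise-01}'', i.e.\ specialising Proposition~\ref{prop:pbox-iff-possibility-0-1-ldf-udf} to $C=B$, exactly as you do. Your treatment is in fact more careful than the paper's, since you work out explicitly that $(\max B,\min B^c]=[\min B^c]_\simeq$ and flag the mild ambiguity in reading ``$\min B^c\in A$'' modulo $\simeq$; the paper leaves this implicit.
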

\begin{proof}
Immediate by Proposition~\ref{prop:pbox-iff-possibility-0-1-ldf-udf}
and Corollary~\ref{co:nex-precise-01}.
\end{proof}

As a consequence, we deduce that a precise $0$--$1$-valued p-box
on $(\pspace,\preceq)=([0,1],\leq)$ never induces a possibility measure---except when
$\df=I_{[0,1]}$. Indeed, if $\df\neq I_{[0,1]}$, then $B\cap [0,1]\neq \emptyset$,
and the maximum of $B$ would need to have an immediate successor
(the minimum of $B^c$), which cannot be for the usual ordering
$\leq$.

When $\df=I_{[0,1]}$, we obtain $\max B=0-$ and $\min
B^c=0$, whence applying Corollary~\ref{co:precise-01-possib} we
deduce that $(\df,\df)$ is a possibility measure, with
possibility distribution
\begin{equation*}
 \pi(x)=\begin{cases}
  1 &\text{ if } x=0 \\
  0 &\text{ otherwise}.
  \end{cases}
\end{equation*}

To see why the possibility distribution
\begin{equation*}
 \pi(x)=\begin{cases}
  1 &\text{ if } x=y \\
  0 &\text{ otherwise}
  \end{cases}
\end{equation*}
for any $y>0$ does not correspond to the precise p-box $(\df,\df)$ with $\df=I_{[y,1]}$, first note that
\begin{equation*}
  \Pi([0,y))=\sup_{x\prec y} \pi(x)=0.
\end{equation*}
But, for $\Pi$ to be the p-box $\unex_{\df,\df}$, we also require that
\begin{equation*}
  \unex_{\df,\df}([0,y))=\df(y)-\df(0-)=1
\end{equation*}
using Eq.~\eqref{eq:nex-intervals}, because $y$ has no
immediate predecessor. Whence, we arrive at a contradiction.

\section{Constructing Multivariate Possibility Measures from Marginals}
\label{sec:marginals}

In \cite{unpub:troffaesdestercke::pboxes:multivariate}, multivariate
p-boxes were constructed from marginals. We next apply this
construction together with the p-box representation of possibility
measures, given by Theorem~\ref{thm:pbox:representation:of:possib},
to build a joint
possibility measure from some given marginals. As particular cases,
we consider the joint,
\begin{enumerate}[(i)]
\item either without any assumptions about
dependence or independence between variables, that is, using the
Fr\'echet-Hoeffding bounds \cite{hoeffding1963},
\item or assuming epistemic independence between all variables, which allows us to use the factorization
property \cite{2010:decooman:ine}.
\end{enumerate}

Let us consider $n$ variables $X_1$, \dots, $X_n$ assuming values in $\values_1$, \dots, $\values_n$. Assume that
for each variable $X_i$ we are given a possibility measure $\Pi_i$
with corresponding possibility distribution $\pi_i$ on
$\values_i$. We assume that the range of all marginal
possibility distributions is $[0,1]$; in particular,
Theorem~\ref{thm:pbox:representation:of:possib} applies, and each
marginal can be represented by a p-box on $(\values_i,\preceq_i)$,
with vacuous $\ldf_i$, and $\udf_i=\pi_i$. Remember that the
preorder $\preceq_i$ is the one induced by $\pi_i$.

\subsection{Multivariate Possibility Measures}

The construction in
\cite{unpub:troffaesdestercke::pboxes:multivariate} employs the
following mapping $Z$, which induces a preorder $\preceq$ on
$\pspace=\values_1\times\dots\times \values_n$:
\begin{equation}\label{eq:joint:z}
  Z(x_1,\dots,x_n)=\max_{i=1}^n \pi_i(x_i).
\end{equation}
With this choice of $Z$, we can easily
find the possibility measure which represents the joint as accurately as possible, under any rule of combination of coherent lower probabilities:

\begin{lemma}\label{lem:joint-p-box}
  Let $\odot$ be any rule of combination of coherent upper probabilities, mapping the marginals $\upr_1$, \dots, $\upr_n$ to a joint coherent upper probability $\bigodot_{i=1}^n\upr_i$ on all events. If there is a continuous function $u$ for which
  \begin{align*}
    \bigodot_{i=1}^n\upr_i\left(\prod_{i=1}^n A_i\right)
    &=
    u(\upr_1(A_1),\dots,\upr_n(A_n))
  \end{align*}
  for all $A_1\subseteq \values_1$, \dots, $A_n\subseteq \values_n$, then the
  possibility distribution $\pi$ defined by
  \begin{align*}
    \pi(x)&=u(Z(x),\dots,Z(x))
  \end{align*}
  induces the least conservative upper cumulative distribution
  function on $(\pspace,\preceq)$ that dominates the combination
  $\bigodot_{i=1}^n\Pi_i$ of $\Pi_1$, \dots, $\Pi_n$.
\end{lemma}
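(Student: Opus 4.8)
The plan is to exploit the p-box representation of each marginal possibility measure $\Pi_i$ and the explicit formula for the natural extension of a p-box with vacuous lower cumulative distribution function. By Theorem~\ref{thm:pbox:representation:of:possib}, each $\Pi_i$ is represented by the p-box $(\ldf_i=I_{[1_{\values_i}]_{\simeq_i}},\udf_i=\pi_i)$ on $(\values_i,\preceq_i)$, and the combined upper probability $\bigodot_{i=1}^n\Pi_i$ therefore equals $\bigodot_{i=1}^n\upr_i$ on all events. First I would compute, for any $x=(x_1,\dots,x_n)\in\pspace$, the value of the combined upper probability on the box $\prod_{i=1}^n[0_{\values_i},x_i]=[0_\pspace,x]$ (with respect to the product order, where $0_\pspace$ is the componentwise least element). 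By the hypothesis on $\odot$ and Eq.~\eqref{eq:basic-pbox}, this is $u(\udf_1(x_1),\dots,\udf_n(x_n))=u(\pi_1(x_1),\dots,\pi_n(x_n))$; this is not yet $u(Z(x),\dots,Z(x))$, so one cannot simply read off $\udf$ from the box probabilities directly.

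The key observation is that the preorder $\preceq$ induced by $Z$ is coarser than the product order: $(x_1,\dots,x_n)\preceq(y_1,\dots,y_n)$ whenever $\max_i\pi_i(x_i)\le\max_i\pi_i(y_i)$. Hence the relevant downset for the p-box on $(\pspace,\preceq)$ at a point $x$ is $[0_\pspace,x]_\preceq=\set{z}{Z(z)\le Z(x)}$, which is generally strictly larger than $\prod_i[0_{\values_i},x_i]$. The least conservative upper cumulative distribution function $\udf$ on $(\pspace,\preceq)$ dominating $\bigodot_{i=1}^n\Pi_i$ must satisfy $\udf(x)\ge\bigodot_{i=1}^n\Pi_i([0_\pspace,x]_\preceq)$, and coherence forces equality for the least such function. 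So I would compute $\bigodot_{i=1}^n\Pi_i([0_\pspace,x]_\preceq)$. The set $[0_\pspace,x]_\preceq$ is not a product, but one can bound $\bigodot_{i=1}^n\Pi_i$ on it by covering it with a product set: the smallest product set containing it is $\prod_i\set{z_i\in\values_i}{\pi_i(z_i)\le Z(x)}$, whose $i$-th factor has $\Pi_i$-probability equal to $\sup\set{\pi_i(z_i)}{\pi_i(z_i)\le Z(x)}$. Using the assumption that $\pi_i(\values_i)=[0,1]$ (and hence each such supremum equals $Z(x)$ exactly), this product set has $\bigodot_{i=1}^n\upr_i$-probability $u(Z(x),\dots,Z(x))$ by the hypothesis on $u$; monotonicity of $\bigodot_{i=1}^n\Pi_i$ then gives $\bigodot_{i=1}^n\Pi_i([0_\pspace,x]_\preceq)\le u(Z(x),\dots,Z(x))$. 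For the matching lower bound, I would use that $[0_\pspace,x]_\preceq$ contains points $z$ with $Z(z)$ arbitrarily close to $Z(x)$ on a single coordinate (again by surjectivity of each $\pi_i$), so that $\bigodot_{i=1}^n\Pi_i([0_\pspace,x]_\preceq)\ge\bigodot_{i=1}^n\Pi_i(\prod_i A_i^{(k)})=u(\dots)\to u(Z(x),\dots,Z(x))$ by continuity of $u$; hence $\udf(x)=u(Z(x),\dots,Z(x))=\pi(x)$.

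Finally I would verify that $\pi$ so defined is a genuine possibility distribution (its supremum over $\pspace$ is $1$, which follows since $\sup_x Z(x)=1$ and $u$ is continuous with $u(1,\dots,1)=1$, the latter because $\bigodot_{i=1}^n\upr_i(\pspace)=1$), and is non-decreasing with respect to $\preceq$, so that $\udf=\pi$ is a legitimate upper cumulative distribution function on $(\pspace,\preceq)$; then the p-box $(I_{[1_\pspace]_\simeq},\udf)$ induces the possibility measure with distribution $\pi$ by Eq.~\eqref{eq:unex-0-1-ldf-order-complete-vacuous}, and by construction it is the least conservative such upper cumulative distribution function dominating $\bigodot_{i=1}^n\Pi_i$. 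The main obstacle I anticipate is the careful handling of the discrepancy between the product order and the coarser order induced by $Z$: one must argue that the natural extension of the joint over the $\preceq$-downsets is controlled exactly by the product-set values via continuity of $u$ and the full-range assumption $\pi_i(\values_i)=[0,1]$, rather than picking up extra mass from the non-product shape of $[0_\pspace,x]_\preceq$.
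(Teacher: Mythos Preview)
Your approach differs from the paper's: the paper invokes Lemma~22 of \cite{unpub:troffaesdestercke::pboxes:multivariate} as a black box to obtain $\udf(z)=u(z,\dots,z)$, and then uses Proposition~\ref{prop:pbox-iff-possibility-0-1-ldf} together with the continuity of $u$ to check that this $\udf$ actually yields a possibility measure. You instead compute $\udf$ from first principles as the value of $\bigodot_i\Pi_i$ on the $\preceq$-downsets, which is more self-contained and is the right idea.

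There is, however, a correctable slip. You assert that $[0_\pspace,x]_\preceq$ is not a product set, but in fact it is: $z\in[0_\pspace,x]_\preceq$ means $\max_i\pi_i(z_i)\le Z(x)$, i.e.\ $\pi_i(z_i)\le Z(x)$ for every $i$, so
\[
[0_\pspace,x]_\preceq=\prod_{i=1}^n\{z_i\in\values_i:\pi_i(z_i)\le Z(x)\}.
\]
The hypothesis on $\odot$ therefore applies directly, and by the full-range assumption each factor has $\Pi_i$-probability exactly $Z(x)$, giving $\bigodot_i\Pi_i([0_\pspace,x]_\preceq)=u(Z(x),\dots,Z(x))$ without any limiting argument; your continuity-based lower bound is unnecessary here. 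Continuity of $u$ does matter, but not where you place it: citing Eq.~\eqref{eq:unex-0-1-ldf-order-complete-vacuous} only yields $\unepbox(A)=\udf(\sup A)$, and to conclude this equals $\sup_{x\in A}\pi(x)$ you must still verify $\udf(x)=\udf(x-)$ at points with no immediate predecessor (Proposition~\ref{prop:pbox-iff-possibility-0-1-ldf}). That is precisely where the paper deploys continuity of $u$ together with the fact that $Z$ has range $[0,1]$.
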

\begin{proof}
  To apply
  \cite[Lem.~22]{unpub:troffaesdestercke::pboxes:multivariate}, we
  first must consider the upper cumulative distribution functions, which in our case coincide with the possibility distributions, as functions on
  the unit interval $z\in[0,1]$. For the marginal possibility
  distribution $\pi_i$, the preorder is the one induced by $\pi_i$
  itself, so, as a function of $z$, $\pi_i$ is simply the identity
  map:
  \begin{align*}
    \pi_i(z)=\pi_i(\pi_i^{-1}(z))=z.
  \end{align*}
  Using \cite[Lem.~22]{unpub:troffaesdestercke::pboxes:multivariate}, the
  least conservative upper cumulative distribution function on the space
  $(\pspace,\preceq)$ that dominates the combination
  $\bigodot_{i=1}^n\Pi_i$ is given by
  \begin{equation*}
    \udf(z)=u(\pi_1(z),\dots,\pi_n(z))=u(z,\dots,z) \ \forall z \in [0,1].
  \end{equation*}
  As a function of $x\in\pspace$, this means that
  \begin{equation*}
    \udf(x)=u(Z(x),\dots,Z(x))
  \end{equation*}
  with the $Z$ that induced $\preceq$, that is, the one defined in
  Eq.~\eqref{eq:joint:z}.

  Now, by Proposition~\ref{prop:pbox-iff-possibility-0-1-ldf}, such upper cumulative distribution function corresponds to a
  possibility measure with possibility distribution
  \begin{equation*}
    \pi(x)=u(Z(x),\dots,Z(x))
  \end{equation*}
  whenever $\udf(x)=\udf(x-)$ for all $x\in\pspace$ that have no
  immediate predecessor, that is, whenever
  \begin{equation*}
    \udf(x)=\sup_{y\colon Z(y)<Z(x)}\udf(y)
  \end{equation*}
  for all $x$ such that $Z(x)>0$.
  But this must hold, because (i) the range of $Z$ is $[0,1]$, so
  $Z(y)$ can get arbitrarily close to $Z(x)$ from below, and (ii) $u$
  is continuous, so $\udf(y)=u(Z(y),\dots,Z(y))$ gets arbitrarily
  close to $\udf(x)=u(Z(x),\dots,Z(x))$.
\end{proof}

\subsection{Natural Extension: The Fr\'echet Case}

The \emph{natural extension} $\boxtimes_{i=1}^n\upr_i$ of $\upr_1$,
\dots, $\upr_n$ is the upper envelope of all joint (finitely
additive) probability measures whose marginal distributions are
compatible with the given marginal upper probabilities. So, the
model is completely vacuous (that is, it makes no assumptions) about
the dependence structure, as it includes all possible forms of
dependence. See \cite[p.~120, \S 3.1]{cooman2003c} for a rigorous
definition. In this paper, we only need the following equality,
which is one of the Fr\'echet bounds (see for instance \cite[p.~122,
\S 3.1.1]{walley1991}):
\begin{align}
  \label{eq:joint-natural-extension}
  \bigboxtimes_{i=1}^n\upr_i\left(\prod_{i=1}^n A_i\right)
  &=\min_{i=1}^n\upr_i(A_i)
\end{align}
for all $A_1\subseteq \values_1$, \dots, $A_n\subseteq \values_n$.

\begin{theorem}\label{th:joint-alldep}
  The possibility distribution
  \begin{align*}
    \pi(x)&=\max_{i=1}^n\pi_i(x_i)
  \end{align*}
  induces the least conservative upper cumulative distribution
  function on $(\pspace,\preceq)$ that dominates the natural extension
  $\boxtimes_{i=1}^n\Pi_i$ of $\Pi_1$, \dots, $\Pi_n$.
\end{theorem}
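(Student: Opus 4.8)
The plan is to apply Lemma~\ref{lem:joint-p-box} directly, taking the rule of combination $\odot$ to be the natural extension $\boxtimes$ and taking $u$ to be the minimum operator. First I would set $u(t_1,\dots,t_n)=\min_{i=1}^n t_i$ for $(t_1,\dots,t_n)\in[0,1]^n$; this is manifestly continuous. The natural extension $\boxtimes_{i=1}^n\upr_i$ is an upper envelope of a set of finitely additive probabilities, hence a coherent upper probability on all events, so it is indeed a rule of combination of the type required by Lemma~\ref{lem:joint-p-box}.

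Next I would invoke the Fr\'echet bound~\eqref{eq:joint-natural-extension}, which states precisely that
\begin{equation*}
  \bigboxtimes_{i=1}^n\upr_i\left(\prod_{i=1}^n A_i\right)
  =\min_{i=1}^n\upr_i(A_i)
  =u(\upr_1(A_1),\dots,\upr_n(A_n))
\end{equation*}
for all $A_1\subseteq\values_1$, \dots, $A_n\subseteq\values_n$. Thus the hypothesis of Lemma~\ref{lem:joint-p-box} is satisfied with this choice of continuous $u$, and the lemma yields that the possibility distribution $x\mapsto u(Z(x),\dots,Z(x))$ induces the least conservative upper cumulative distribution function on $(\pspace,\preceq)$ that dominates $\boxtimes_{i=1}^n\Pi_i$, where $Z$ and $\preceq$ are as in Eq.~\eqref{eq:joint:z}.

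It then remains only to simplify the expression: since $u=\min$ and all of its arguments equal $Z(x)$, we get $u(Z(x),\dots,Z(x))=Z(x)=\max_{i=1}^n\pi_i(x_i)$ by the definition~\eqref{eq:joint:z} of $Z$. This is exactly the $\pi$ in the statement, which finishes the proof.

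I do not expect any real obstacle here: the argument is essentially a one-line substitution into Lemma~\ref{lem:joint-p-box}. The only points requiring a word of care are (a) noting that $\boxtimes$ produces a coherent upper probability so the lemma applies, and (b) observing that the preorder $\preceq$ on $\pspace$ referred to in the statement is by construction the one induced by $Z$ as in Eq.~\eqref{eq:joint:z}, so that the ``least conservative upper cumulative distribution function on $(\pspace,\preceq)$'' in the conclusion of the lemma matches the one in the theorem without further adjustment.
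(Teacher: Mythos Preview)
Your proposal is correct and follows exactly the same approach as the paper: the paper's proof is simply ``Immediate, by Lemma~\ref{lem:joint-p-box} and Eq.~\eqref{eq:joint-natural-extension}.'' Your added remarks about coherence of $\boxtimes$ and the matching preorder are fine clarifications, but the core argument---apply Lemma~\ref{lem:joint-p-box} with $u=\min$ and simplify $u(Z(x),\dots,Z(x))=Z(x)$---is identical.
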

\begin{proof}
  Immediate, by Lemma~\ref{lem:joint-p-box} and
  Eq.~\eqref{eq:joint-natural-extension}.
\end{proof}

\subsection{Independent Natural Extension}

In contrast, we can consider joint models which satisfy the property
of \emph{epistemic independence} between the different $X_1$, \dots,
$X_n$. These have been studied in \cite{miranda2003} in the case of
two marginal possibility measures. The most conservative of these
models is called the \emph{independent natural extension}
$\otimes_{i=1}^n\lpr_i$ of $\lpr_1$, \dots, $\lpr_n$. See
\cite{2010:decooman:ine} for a rigorous definition and properties,
and \cite{miranda2003} for a study of joint possibility measures
that satisfy epistemic independence in the case of two variables. In
this paper, we only need the following equality for the independent
natural extension:
\begin{align}
  \label{eq:joint-independent-natural-extension}
  \bigotimes_{i=1}^n\upr_i\left(\prod_{i=1}^n A_i\right)
  &=
  \prod_{i=1}^n\upr_i(A_i)
\end{align}
for all $A_1\subseteq \values_1$, \dots, $A_n\subseteq \values_n$.

\begin{theorem}\label{th:joint-independent}
  The possibility distribution
  \begin{align}
    \label{eq:joint-independent}
    \pi(x)&=\left(\max_{i=1}^n\pi_i(x_i)\right)^n
  \end{align}
  induces the least conservative upper cumulative distribution
  function on $(\pspace,\preceq)$ that dominates
  the independent natural extension
  $\otimes_{i=1}^n\Pi_i$ of $\Pi_1$, \dots, $\Pi_n$.
\end{theorem}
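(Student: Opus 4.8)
The plan is to apply Lemma~\ref{lem:joint-p-box} with the rule of combination $\odot=\otimes$ (the independent natural extension) and with $u$ taken to be the product function $u(t_1,\dots,t_n)=\prod_{i=1}^n t_i$, mirroring exactly the proof of Theorem~\ref{th:joint-alldep}, where the minimum played the role of $u$. First I would check the two hypotheses of Lemma~\ref{lem:joint-p-box}. The factorization hypothesis holds because, by Eq.~\eqref{eq:joint-independent-natural-extension}, for all $A_1\subseteq\values_1$, \dots, $A_n\subseteq\values_n$ we have $\bigotimes_{i=1}^n\upr_i\left(\prod_{i=1}^n A_i\right)=\prod_{i=1}^n\upr_i(A_i)=u(\upr_1(A_1),\dots,\upr_n(A_n))$. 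The continuity hypothesis holds because $u$ is a polynomial, hence continuous on $[0,1]^n$.

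With both hypotheses verified, Lemma~\ref{lem:joint-p-box} directly gives that the possibility distribution $\pi(x)=u(Z(x),\dots,Z(x))=(Z(x))^n=\left(\max_{i=1}^n\pi_i(x_i)\right)^n$, using the definition of $Z$ in Eq.~\eqref{eq:joint:z}, induces the least conservative upper cumulative distribution function on $(\pspace,\preceq)$ that dominates $\bigotimes_{i=1}^n\Pi_i$. This is precisely the claim of the theorem, with $\pi$ as in Eq.~\eqref{eq:joint-independent}, so the proof is complete in a line once the lemma is invoked.

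I do not expect any genuine obstacle here: the only things to confirm are that the independent natural extension factorizes on product events through a function $u$ and that this $u$ is continuous, both of which are immediate. All the delicate work — that the resulting $\pi$ is in fact a possibility distribution and that the p-box with vacuous lower cumulative distribution function and this $\udf$ is genuinely maxitive (via Proposition~\ref{prop:pbox-iff-possibility-0-1-ldf} together with the fact that $Z$ has range $[0,1]$ and $u$ is continuous) — has already been absorbed into the proof of Lemma~\ref{lem:joint-p-box} and need not be repeated.
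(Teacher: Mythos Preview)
Your proposal is correct and matches the paper's own proof, which reads simply ``Immediate, by Lemma~\ref{lem:joint-p-box} and Eq.~\eqref{eq:joint-independent-natural-extension}.'' You have filled in exactly the right details: the product function $u(t_1,\dots,t_n)=\prod_{i=1}^n t_i$ is continuous and Eq.~\eqref{eq:joint-independent-natural-extension} supplies the factorization, so Lemma~\ref{lem:joint-p-box} applies directly.
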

\begin{proof}
  Immediate, by Lemma~\ref{lem:joint-p-box} and
  Eq.~\eqref{eq:joint-independent-natural-extension}.
\end{proof}
Note, however, that there is no least conservative possibility
measure that corresponds to the independent natural extension of
possibility measures \cite[Sec.~6]{miranda2003}.

We do not consider the minimum rule and the product
rule
\begin{equation*}
  \min_{i=1}^n\pi_i(x_i)\text{ and }\prod_{i=1}^n\pi_i(x_i),
\end{equation*}
as their relation with the theory of coherent lower previsions is
still unclear. However, we can compare the above approximation with
the following outer approximation given by
\cite[Proposition~1]{2009:destercke:consonant}:
\begin{equation}
  \label{eq:rsi:possibility}
  \pi(x)=\min_{i=1}^n(1-(1-\pi_i(x_i))^n).
\end{equation}
The above equation is an outer approximation in case of \emph{random
set independence}, which is slightly more
conservative than the independent natural extension
\cite[Sec.~4]{couso1999b}, so in particular, it is also an outer approximation
of the independent natural extension.
Essentially, each distribution $\pi_i$ is transformed into
$1-(1-\pi_i)^n$ before applying the minimum rule. It can be
expressed more simply as
\begin{equation*}
  1-\max_{i=1}^n(1-\pi_i(x_i))^n.
\end{equation*}
If for instance $\pi(x_i)=1$ for at least one $i$, then this formula
provides a more informative (i.e., lower) upper bound than
Theorem~\ref{th:joint-independent}. On the other hand, when all
$\pi(x_i)$ are, say, less than $1/2$, then
Theorem~\ref{th:joint-independent} does better.

Finally, note that neither Eq.~\eqref{eq:joint-independent} nor
Eq.~\eqref{eq:rsi:possibility} are proper joints, in the sense that,
in both cases, the marginals of the joint are outer approximations of
the original marginals, and will in general not coincide with the
original marginals.

\section{Conclusions}\label{sec:conclusions}

Both possibility measures and p-boxes can be seen as coherent upper
probabilities. We used this framework to study the relationship
between possibility measures and p-boxes. Following
\cite{unpub:troffaesdestercke::pboxes:multivariate}, we allowed
p-boxes on arbitrary totally preordered spaces, whence including
p-boxes on finite spaces, on real intervals, and even multivariate
ones.

We began by considering the more general case of maxitive measures,
and proved that a necessary and sufficient condition for a p-box to
be maxitive is that at least one of the cumulative distribution
functions of the p-box must be $0$--$1$ valued. Moreover, we
determined the natural extension of a p-box in those cases and gave
a necessary and sufficient condition for the p-box to be
supremum-preserving, i.e., a possibility measure. As special cases,
we also studied degenerate p-boxes, and precise $0$--$1$ valued
p-boxes.

Secondly, we showed that almost every possibility measure can be
represented as a p-box. Hence, in general, p-boxes are more
expressive than possibility measures, while still keeping a
relatively simple representation and calculus
\cite{unpub:troffaesdestercke::pboxes:multivariate}, unlike many
other models, such as for instance lower previsions and credal sets,
which typically require far more advanced techniques, such as linear
programming.

Finally, we considered the multivariate case in more detail, by
deriving a joint possibility measure from given marginals using the
p-box representation established in this paper and results from
\cite{unpub:troffaesdestercke::pboxes:multivariate}.

In conclusion, we established new connections between both models,
strengthening known results from literature, and allowing many
results from possibility theory to be embedded into the theory of
p-boxes, and vice versa.

As future lines of research, we point out the generalisation of a
number of properties of possibility measures to p-boxes, such as the
connection with fuzzy logic \cite{zadeh1978a} or the representation
by means of graphical structures \cite{borgelt2000}, and the study
of the connection of p-boxes with other uncertainty models, such as
clouds and random sets.

\section*{Acknowledgements}

Work partially supported by projects TIN2008-06796-C04-01 and
MTM2010-17844 and by a doctoral grant from the IRSN. We are also
especially grateful to Gert de Cooman and Didier Dubois for the many
fruitful discussions, suggestions, and for helping us with an
earlier draft of this paper.

\bibliographystyle{plain}
\bibliography{general,transitional}
\end{document}